\newtheorem*{rep@theorem}{\rep@title}
\newcommand{\newreptheorem}[2]{%
\newenvironment{rep#1}[1]{%
 \def\rep@title{#2 \ref{##1}}%
 \begin{rep@theorem}}%
 {\end{rep@theorem}}}
\newtheorem {theorem}{Theorem}
\newtheorem {lemma}[theorem]{Lemma}
\newtheorem {proposition}[theorem]{Proposition}
\newtheorem {corollary}[theorem]{Corollary}
\numberwithin{equation}{section}
\numberwithin{theorem}{section}
\theoremstyle{definition}
\newtheorem{definition}[theorem]{Definition}
\newtheorem{construction}[theorem]{Construction}
\newtheorem{data}[theorem]{Data}
\newtheorem{notation}[theorem]{Notation}
\newtheorem{remark}[theorem]{Remark}
\newtheorem{ques}[theorem]{Question}
\newtheorem{example}[theorem]{Example}
\newtheorem*{ack}{Acknowledgement}
\newtheorem*{org}{Organization}
\newlist{pcases}{enumerate}{1}
\setlist[pcases]{
  label=\bf{Case~\arabic*:}\protect\thiscase.~,
  ref=\arabic*,
  align=left,
  labelsep=0pt,
  leftmargin=0pt,
  labelwidth=0pt,
  parsep=0pt
}
\newcommand{\case}[1][]{%
  \if\relax\detokenize{#1}\relax
    \def\thiscase{}%
  \else
    \def\thiscase{~#1}%
  \fi
  \item
}
\newcommand{\bbslash}{\backslash\backslash}
\newcommand{\Z}{\mathbb{Z}}
\newcommand{\rk}{\textrm{rk}\,}
\newcommand{\C}{\mathbb{C}}
\newcommand{\F}{\mathbb{F}}
\newcommand{\Q}{\mathbb{Q}}
\newcommand{\Mod}{\mathrm{Mod}}
\newcommand{\cH}{\mathcal{H}}
\newcommand{\tcH}{\widetilde{\mathcal{H}}}
\newcommand{\cR}{\mathcal{R}}
\newcommand{\bI}{\mathbb{I}}
\newcommand{\bK}{\mathbb{K}}
\newcommand{\fs}{\mathfrak{s}}
\newcommand{\wti}[1]{{\widetilde{#1}}}
\DeclareFontFamily{U}{mathx}{\hyphenchar\font45}
\DeclareFontShape{U}{mathx}{m}{n}{
      <5> <6> <7> <8> <9> <10>
      <10.95> <12> <14.4> <17.28> <20.74> <24.88>
      mathx10
      }{}
\DeclareSymbolFont{mathx}{U}{mathx}{m}{n}
\DeclareMathAccent{\widecheck}{0}{mathx}{"71}
\newcommand{\img}{\operatorname{Im}}
\newcommand{\gr}{{\operatorname{gr}}}
\newcommand{\id}{\operatorname{id}}
\newcommand{\Dc}[2]{D^{#1}_{\mathrm{c},#2}}
\newcommand{\Dcc}[2]{D^{#1}_{\mathrm{cc},#2}}
\newcommand{\pt}{\mathrm{pt}}
\tikzset{every picture/.style=thick}
\tikzset{link/.style = { white, double = black, line width = 1.75pt, double distance = 1.25pt, looseness=1.75 }}
\tikzset{crossing/.style = {draw, circle, dotted, minimum size=0.5cm, inner sep=0, outer sep=0}}
\pgfplotsset{compat=1.12}
\newcommand{\bpf}{\begin{proof}}
\newcommand{\epf}{\end{proof}}
\newcommand{\bthm}{\begin{theorem}}
\newcommand{\ethm}{\end{theorem}}
\newcommand{\bprop}{\begin{proposition}}
\newcommand{\eprop}{\end{proposition}}
\newcommand{\bcor}{\begin{corollary}}
\newcommand{\ecor}{\end{corollary}}
\newcommand{\blem}{\begin{lemma}}
\newcommand{\elem}{\end{lemma}}
\newcommand{\bdefn}{\begin{definition}}
\newcommand{\edefn}{\end{definition}}
\newcommand{\bcons}{\begin{construction}}
\newcommand{\econs}{\end{construction}}
\newcommand{\bdata}{\begin{data}}
\newcommand{\edata}{\end{data}}
\newcommand{\bexmp}{\begin{example}}
\newcommand{\eexmp}{\end{example}}
\newcommand{\brem}{\begin{remark}}
\newcommand{\erem}{\end{remark}}
\newcommand{\bnot}{\begin{notation}}
\newcommand{\enot}{\end{notation}}
\newcommand{\benu}{\begin{enumerate}}
\newcommand{\benum}{\begin{enumerate}[leftmargin=*]}
\newcommand{\eenu}{\end{enumerate}}
\newcommand{\beq}{\begin{equation}}
\newcommand{\eeq}{\end{equation}}
\newcommand{\ep}{\epsilon}
\newcommand{\op}{\oplus}
\newcommand{\aand}{~\mathrm{and}~}
\newcommand{\xra}{\xrightarrow}
\newcommand{\nkp}[1]{\nu^{\sharp,\mathbb{K}}_{+}(#1)}
\newcommand{\nkm}[1]{\nu^{\sharp,\mathbb{K}}_{-}(#1)}
\newcommand{\nk}[1]{\nu^{\sharp}_{\mathbb{K}}(#1)}
\definecolor{lygreen}{HTML}{016646}
\title{Instanton dimensions of knot surgeries over arbitrary fields}
\author{Zhenkun Li}
\address{Academy of Mathematics and Systems science\\Chinese Academy of Science}
\email{zhenkun@amss.ac.cn}
\author{Fan Ye}
\address{Department of Mathematics\\Harvard University}
\email{fanye@math.harvard.edu}
\begin{document}

\begin{abstract}
Suppose $K \subset S^3$ is a knot and suppose $p$ and $q$ are co-prime integers with $q\ge 1$. For any field $\mathbb{K}$, we establish a dimension formula for the framed instanton homology of knot surgeries:
$$
\dim I^\sharp(S^3_{p/q}(K); \mathbb{K}) = q \cdot r_{\mathbb{K}}(K) + |p - q \cdot \nu^\sharp_{\mathbb{K}}(K)|
$$
for certain integers $r_{\mathbb{K}}(K)$ and $\nu^\sharp_{\mathbb{K}}(K)$, except possibly when $p/q = \nu^\sharp_{\mathbb{K}}(K)$ and $\nu^\sharp_{\mathbb{K}}(K)$ is even. This formula generalizes the result of Baldwin--Sivek from the case $\mathbb{K} = \mathbb{C}$ to arbitrary fields. Based on the result for $\mathbb{K} = \mathbb{Z}/2$, we obtain that $S^3_{p/q}(K)$ is not $SU(2)$-abelian for any knot $K$ other than the unknot and the right-handed trefoil whenever $p/q \in [0,6)$ and $p \in \{ a^e, 2a^e \}$ for some prime number $a$ and natural number $e$, thereby extending existing results for $p/q \in [0,5]$ and $p = a^e$. A byproduct of the techniques developed in this paper is that we generalize the distance-two surgery exact triangle by Culler--Daemi--Xie and Daemi--Miller-Eismeier--Lidman from $\mathbb{Z}/2$ coefficients to any coefficient ring.
\end{abstract}
\maketitle
\section{Introduction}
For a closed connected oriented $3$-manifold $Y$ and an unoriented $1$-submanifold $\omega\subset Y$, Kronheimer--Mrowka \cite{kronheimer2010knots,kronheimer2011knot,kronheimer2011khovanov} constructed a relative $\Z/4$-graded $\Z$-module $I^\sharp(Y,\omega)$, called the \emph{framed instanton homology} of $(Y,\omega)$; see \cite[\S 2.1]{LY2025torsion} for the relation between different definitions. When $\omega=0$, the empty set, the module $I^\sharp(Y,\omega)$ can be regarded as a deformation of the homology of the representation variety (without quotient by conjugation) of $Y$\[R(Y)=\operatorname{Hom}(\pi_1(Y),SU(2)),\]and is closely related to the existence of irreducible $SU(2)$ representations of $\pi_1(Y)$ (i.e.\ $SU(2)$ representations with nonabelian images).

In \cite{baldwin2020concordance,baldwin2022concordanceII}, Baldwin--Sivek established a dimension formula for the framed instanton homology of knot surgeries $I^\sharp(S^3_{p/q}(K),\omega;\C)$ over $\C$, where $K\subset S^3$ is a knot and $p$ and $q$ are co-prime integers, which is similar to that for Heegaard Floer homology over $\F_2=\Z/2$ by Hanselman \cite[Proposition 15]{Hanselman2023cosmetic}. Baldwin--Sivek's work depends heavily on the structure theorem and the adjunction formula for the instanton cobordism maps developed in \cite{baldwin2019lspace}. Later, Deeparaj \cite{bhat2023newtriangle} and the authors of this paper \cite{LY2025torsion,LY20255surgery} studied these instanton homologies over $\F_2$, which turn out to violate the adjunction formula and provides more information about irreducible $SU(2)$ representations.

In this paper, we study the framed instanton homology of knot surgeries over a general field $\bK$, and obtain applications about irreducible $SU(2)$ representations by taking $\bK=\F_2$. It is worth mentioning that many techniques indeed work over any coefficient ring, but we only focus on field coefficients because vector spaces over a field are easier to study. Throughout this paper, we fix a field $\bK$ and write $\dim$ for $\dim_{\bK}$ for short.

Note that $I^\sharp(Y,\omega)$ depends only on the homology class $[\omega]\in H_1(Y;\F_2)$ up to isomorphism, (though the isomorphism is canonical only when $b_1(Y)=0$; see \S \ref{sec: bundle sets}). A direct computation shows that
\begin{equation}\label{eq: homology of surgery}
    H_1(S^3_{p/q}(K);\F_2) = \begin{cases}
    0 & \mathrm{when}~p~\mathrm{odd};\\
    \F_2\langle [\mu]\rangle & \mathrm{when}~p~\mathrm{even},
\end{cases}
\end{equation}
where $\mu$ is the meridian of $K$. Hence the dimension of $I^\sharp(Y,\omega)$ is independent of $\omega$ when $p$ is odd, and has only two possibilities $I^\sharp(Y)=I^\sharp(Y,0)$ and $I^\sharp(Y,\mu)$ when $p$ is even.

The main theorem of this paper is the following dimension formula for the framed instanton homology of knot surgeries.
\begin{theorem}[{Propositions \ref{prop: dim formula for integers} and  \ref{prop: dimension formula}}]\label{thm: dimension formula, main}
Suppose $K \subset S^3$ is a knot and suppose $\mu$ is the meridian of $K$. Suppose $p$ and $q$ are co-prime integers with $q\ge 1$. Then there exists a concordance invariant $\nk{K}\in\Z$ satisfying $\nk{\widebar{K}}=\nk{K}$ for the mirror knot $\widebar{K}$. Moreover, for \begin{equation*}
	    M=\nk{K}\aand R=r_{\bK}(K)=\min \left\{\dim I^\sharp(S^3_{M}(K);\bK),\dim I^\sharp(S^3_{M}(K),\mu;\bK)\right\},
	\end{equation*}we have
	\begin{equation}\label{eq: dim formula rational, main}
	    \dim I^\sharp(S^3_{p/q}(K);\bK)=\dim I^\sharp(S^3_{p/q}(K),\mu;\bK)= qR + |p-qM|.
	\end{equation}
    except possibly when $p/q = M$ and $M$ is even. In the exceptional case, we have
	\begin{equation}\label{eq: differ 2}
	    \left\{\dim I^\sharp(S^3_{M}(K);\bK),\dim I^\sharp(S^3_{M}(K),\mu;\bK)\right\}=\{R,R+2\}.
	\end{equation}
\end{theorem}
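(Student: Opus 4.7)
The plan is to establish the formula first for integer surgeries (Proposition \ref{prop: dim formula for integers}) and then to bootstrap to rational surgeries (Proposition \ref{prop: dimension formula}), using the distance-one and the newly-generalized distance-two surgery exact triangles as the principal technical input.

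For integer surgeries, I would write $d_n(\omega) = \dim I^\sharp(S^3_n(K), \omega; \bK)$ for $\omega \in \{0,\mu\}$, where by \eqref{eq: homology of surgery} the bundle $\omega = \mu$ is only meaningful when $n$ is even. The distance-one surgery exact triangle
$$\cdots \to I^\sharp(S^3_n(K), \omega; \bK) \to I^\sharp(S^3_{n+1}(K), \omega'; \bK) \to I^\sharp(S^3; \bK) \to \cdots$$
yields $|d_{n+1} - d_n| \leq 1$ after matching bundles appropriately, and a stability argument using trace cobordisms shows $d_n \to \infty$ with asymptotic slope $1$ as $|n| \to \infty$. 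A convexity statement, powered by the generalized distance-two surgery triangle, then forces the sequence $\{d_n\}$ to be piecewise linear with exactly one minimum, whose location I would take as the definition of $M = \nk{K}$ and whose value is $R$. This yields $d_n = R + |n - M|$ for $n \neq M$, together with the at-most-two-fold ambiguity at $n = M$ in the even case.

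To pass from integer to rational surgeries, I would induct on the length of the continued-fraction expansion of $p/q$, applying a rational surgery exact triangle that relates $S^3_{p/q}(K)$ and $S^3_{p'/q'}(K)$ for Farey neighbors $p/q$ and $p'/q'$, with the third term a lens space. The candidate formula $qR + |p - qM|$ transforms additively under Farey moves in exactly the way matched by the dimensions appearing in these triangles, so the induction closes either by no cancellation (the lens-space contribution shows up in full) or by the connecting map absorbing one class. Concordance invariance of $\nk{K}$ would follow from the trace cobordism of a concordance $K$ to $K'$ inducing an isomorphism of surgery homologies that pins down the PL dimension function, and the mirror-symmetry identity $\nk{\ov{K}} = \nk{K}$ from combining this analysis with the behaviour of $I^\sharp$ under orientation reversal.

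The hardest step will be the exceptional case $p/q = M$ with $M$ even, since Baldwin--Sivek's approach over $\C$ relied on the adjunction inequality, which is known to fail over $\F_2$ and so cannot be invoked for general $\bK$. I would handle it by analyzing the $\mu$-action of $H_1(\cdot;\F_2)$ on $I^\sharp(S^3_M(K); \bK)$: the two dimensions $d_M(0)$ and $d_M(\mu)$ fit into an exact sequence controlled by this action, which bounds their difference by $2$, and a parity argument from the distance-two triangle rules out the equal-dimension case and forces the pair to be exactly $\{R, R+2\}$.
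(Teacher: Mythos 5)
Your overall architecture (integers first via the surgery triangle and a large-$|n|$ stability argument, then rationals by Farey/continued-fraction induction, with the even exceptional case treated separately) matches the paper. However, there are three genuine gaps.

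First, the claim that a ``convexity statement, powered by the generalized distance-two surgery triangle, forces the sequence $\{d_n\}$ to be piecewise linear with exactly one minimum'' does not hold as stated and is not how the paper proceeds. The distance-two triangle does not give convexity of dimensions. What the vanishing of compositions in the surgery triangle (via an embedded sphere of square $0$ meeting the bundle set oddly) actually yields is a trichotomy: the sequence is V-shaped, W-shaped, or \emph{generalized W-shaped}, the last meaning $\nu^{\sharp,\bK}_+ - \nu^{\sharp,\bK}_- > 2$ with an oscillating plateau in between. Note also that in the W-shaped case the trivial-bundle sequence $\{\dim\cH(n)\}$ has a local \emph{maximum} at the middle slope, so it is not unimodal. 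Eliminating the generalized W-shape is a substantial separate step: the paper uses that $\widebar{K}\# K$ is slice, hence concordant to the unknot (which is W-shaped with $\nu^{\sharp,\bK}_\pm=\pm1$), together with a non-vanishing criterion for connected-sum surgery cobordism maps $F^\infty_{a+b}(K_1\# K_2)$ built from injective $F^\infty_a(K_1)$ and $F^\infty_b(K_2)$ with carefully matched bundle sets. Your proposal contains no substitute for this argument.

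Second, for the exceptional case your mechanism (an exact sequence controlled by the $\mu$-action of $H_1(\cdot;\F_2)$ bounding $|d_M(0)-d_M(\mu)|$ by $2$, plus a parity argument from the distance-two triangle) is not substantiated and differs from what works. The bound by $2$ already comes from the shape trichotomy; the paper then shows the difference is exactly $2$ by a $\Z/4$-grading argument: the absolute $\Z/4$-gradings (fixed by spin structures on $S^3_{2k}(K)$) and the grading shifts of the surgery cobordism maps force the graded dimensions of $\cH(2k+1)$, computed from $\cH(2k-1)$ by passing through either $\cH(2k)$ or $\cH(2k,\mu)$, to be inconsistent unless the two middle dimensions differ. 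There is no exact sequence relating $I^\sharp(Y,0)$ and $I^\sharp(Y,\mu)$ of the kind you invoke.

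Third, in the rational induction the third vertex of the triangle for a slope triad $(r_0,r_1,r_2)$ is another surgery on $K$, not a lens space, and the genuinely hard part is not the generic Farey step (where dimension counting splits the triangles) but the slopes in $(M-1,M+1)$ when $M$ is even: there one must prove \eqref{eq: dim formula rational, main} for $M\pm\tfrac1{n+1}$ and $M\pm\tfrac2{2n+1}$ with \emph{both} bundle sets, and the nontrivial-bundle case $\tcH(M\pm\tfrac2{2n+1})$ requires the $(-2)$-sphere relation $I(W,\nu)=\pm I(W,\nu\cup S)$ to identify images of cobordism maps with different bundle sets. Your phrase ``the connecting map absorbing one class'' does not engage with this, and without it the induction does not close near the minimum.
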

\brem\label{rem: inequality}
From \cite[Corollary 1.4]{scaduto2015instantons}, we have\[\chi(I^\sharp(S^3_{p/q}(K);\bK))=\chi(I^\sharp(S^3_{p/q}(K),\mu;\bK))=|p|.\]Hence \[\dim I^\sharp(S^3_{p/q}(K);\bK)=|p|+2k\aand \dim I^\sharp(S^3_{p/q}(K),\mu;\bK)=|p|+2l\]for some $k,l\in\Z_+$ depending on $p/q$. Taking $p/q=\nk{K}$, we obtain that \[r_{\bK}(K)=|\nu^\sharp_{\bK}(K)|+2h\]for some $h\in\Z_+$.
\erem
\bdefn
For a knot $K\subset S^3$, it is called \emph{V-shaped} over $\bK$ if either $\nk{K}$ is odd, or $\nk{K}$ is even and\[r_{\bK}(K)=\dim I^\sharp(S^3_{\nk{K}}(K);\bK).\]It is called \emph{W-shaped} over $\bK$ if $\nk{K}$ is even and \[r_{\bK}(K)=\dim I^\sharp(S^3_{\nk{K}}(K),\mu;\bK).\]
\edefn
\brem\label{rem: other work}
The case $\bK=\C$ in Theorem \ref{thm: dimension formula, main} was proven by Baldwin--Sivek \cite{baldwin2020concordance,baldwin2022concordanceII}. They also showed that $\nu^\sharp_\C(K)$ is even only when it is zero. The case $\bK=\F_2$ in Theorem \ref{thm: dimension formula, main} was also studied independently in upcoming work of Ghosh--Miller-Eismeier \cite{SGMME}. Furthermore, they showed that $K$ is always W-shaped over $\F_2$ and both $\nu^\sharp_{\F_2}(K)$ and $r_{\F_2}(K)$ are divisible by $4$. Those results over $\C$ and $\F_2$ heavily depend on the coefficient fields, while our techniques are more general and potentially applicable to all coefficient rings. In particular, our proof of \eqref{eq: differ 2} (cf.\ Proposition \ref{prop: differed at most by 2}) only involves the $\Z/4$-grading of instanton homologies, which is much simpler than the proofs for $\bK=\C$ in \cite[Theorem 6.1]{baldwin2022concordanceII} and $\bK=\F_2$ in \cite{SGMME}.
\erem

Towards proving the Property P conjecture, Kronheimer--Mrowka \cite{kronheimer04su2} showed that for any nontrivial knot $K$ and any rational slope $r\in[0,2]$, the surgery manifold $S^3_r(K)$ is not $SU(2)$-abelian. Here, a closed $3$-manifold $Y$ is called \emph{$SU(2)$-abelian} if $\pi_{1}(Y)$ admits no irreducible representation into $SU(2)$. As $S^3_{r}(\widebar{K})\cong -S^3_{-r}(K)$ for the mirror knot $\widebar{K}$ of $K$, one can easily extend the set of non-$SU(2)$-abelian slopes to $[-2,2]$. Thus, we only consider positive slopes in the following discussion.

Later, the set of non-$SU(2)$-abelian slopes for any nontrivial knot was extended by many people \cite{baldwin2019lspace,BLSY21,farber2024fixed,sivek2022cyclic}, which also include all rationals $p/q\in (2,5)$ with $p=a^e$ for some prime number $a$ and natural number $e$, all rationals $p/q\in (5,7)$ for $p=2^e$, and all rationals greater than a fixed constant $N_K$ that depends on the knot $K$. From Moser \cite{moser1971elementary}, for the right-handed trefoil $T_{2,3}$, we have\begin{equation*}\label{eq: T23}
    S^3_5(T_{2,3})\cong L(5,4),~S^3_6(T_{2,3})\cong L(2,1)\# L(3,2),\aand S^3_7(T_{2,3})\cong L(7,4),
\end{equation*}which are all $SU(2)$-abelian. Moreover, from Sivek--Zentner \cite[Proposition 4.3]{SZ2022surgery}, for $p/q\in [0,8]$, the manifold $S^3_{p/q}(T_{2,3})$ is $SU(2)$-abelian only for \[\frac{p}{q}=\big\{6,6\pm \frac{1}{n}\big\}_{n\in\Z_+}.\]Recently, the authors of this paper \cite{LY20255surgery} showed that $S_5^3(K)$ is not $SU(2)$-abelian for any nontrivial knot except $T_{2,3}$.

The main applications of this paper are the following theorems. Recall from \cite{baldwin2019lspace} that a knot $K\subset S^3$ is called an \emph{instanton L-space knot} over a field $\bK$ if there exists $p/q\in \Q_+$ such that \[\dim I^\sharp(S^3_{p/q}(K);\bK)=|p|.\]
\bthm[{Theorem \ref{thm: larger than nu}}]\label{thm: larger than nu, main}
Suppose $K$ is a nontrivial knot of genus $g~(\ge 1)$ and suppose $p/q\in (0,\infty)$ with $q\ge 1$, $\gcd(p,q)=1$, and $p\in\{a^e,2a^e\}$ for some prime number $a$ and natural number $e$. If $S^3_{p/q}(K)$ is $SU(2)$-abelian, then the knot $K$ is an instanton L-space knot over any field $\bK$,\[r_{\bK}(K)=\nk{K}\ge \nu^\sharp_\C(K)=2g-1\aand p/q\ge \nk{K}.\]Moreover, for $\bK=\F_2$, we have\[\nu^\sharp_{\F_2}(K)\ge \nu^\sharp_\C(K)+1=2g.\]
\ethm
\bthm[{Theorem \ref{thm: abelian}}]\label{thm: abelian main}
Suppose $K$ is a nontrivial knot and suppose $p/q\in (2,6)$ with $q\ge 1$, $\gcd(p,q)=1$, and $p\in\{a^e,2a^e\}$ for some prime number $a$ and non-negative integer $e$. Then $S^3_{p/q}(K)$ is $SU(2)$-abelian only when $K=T_{2,3}$ and \[\frac{p}{q}\in \big\{6-\frac{1}{n}\big\}_{n\in\Z_+}.\]
\ethm
\brem


We would like to remark that Baldwin--Sivek \cite{baldwin2019lspace} only discussed the non-degeneracy for the case $p=a^e$ for some prime number $a$ and non-negative integer $e$, but their argument applies verbatim to $p=2a^e$ as well (cf.\ Lemma \ref{lem: nondegenerate}). This latter case matters now because the integral slope $6=2\times 3$ is precisely of this form.


Through private communication, we were told that Ghosh and Miller-Eismeier, in their upcoming work \cite{SGMME}, could show that over $\bK=\mathbb{F}_2$, all knots are W-shaped and $\nu^{\sharp}_{\mathbb{F}_2}$ and $r_{\mathbb{F}}^{\sharp}$ are both divisible by $4$, and as an application, they conclude that for any non-trivial knot which is not $T_{2,3}$, the $7$-surgery must be non-$SU(2)$-abelian. We would like to remark that with the help of their results, the interval $(2,6)$ in Theorem \ref{thm: abelian main} could be further extended to $(2,8]$, with the exception $T_{2,3}$ and $p/q\in\{6,6\pm 1/n\}_{n\in\Z_+}$. Indeed, if $K$ were a knot and its $p/q$-surgery is $SU(2)$-abelian, with $p/q\in [6,8]$ and $p\in \{a^e, 2a^e\}$ (note $6$ and $8$ are both included), then Theorem \ref{thm: larger than nu, main} would suggest that $p/q\geq \nu^{\sharp}_{\mathbb{F}_2} = r_{\mathbb{F}_2}^{\sharp}$. Thus, divisibility by $4$ would force either $\nu^{\sharp}_{\mathbb{F}_2} = 4$, which would imply that $K=T_{2,3}$; or $p/q=\nu^{\sharp}_{\mathbb{F}_2} = 8$. Then, the fact that $K$ must be W-shaped would further exclude the possibility for $8$-surgery to be $SU(2)$-abelian.


\erem

Another application of Theorem \ref{thm: dimension formula, main} is to provide a better bound for the limiting slope of $SU(2)$-averse knot, introduced by Sivek--Zentner in \cite{sivek2022cyclic}. They showed that if a knot $K\subset S^3$ admits infinitely many $SU(2)$-cyclic surgeries (which they call a \emph{$SU(2)$-averse knot}), then the set of such slopes has a unique limiting point which is a rational number and is denoted by $r(K)$ (cf. \cite[Theorem 1.1]{sivek2022cyclic}). Moreover, the proof of \cite[Theorem 9.1]{sivek2022cyclic} states that when $r(K)\ge 0$, there are infinitely many slopes with prime numerators in $[\lceil r(K) \rceil - 1, \lceil r(K) \rceil]$ which produces $SU(2)$-abelian 3-manifolds, where $\lceil x \rceil$ denotes the minimal integer no less than $x$. As $r(\widebar{K})=-r(K)$ for the mirror knot $K$, Theorem \ref{thm: abelian main} implies the following corollary.
\bcor
Suppose $K\subset S^3$ is an $SU(2)$-averse knot and suppose $\mathbb{K}$ is an arbitrary field. Then we have
\[
\lceil |r(K)| \rceil \geq |\nu^{\sharp}_{\mathbb{K}}(K)| + 1.
\]In particular, by taking $\bK=\F_2$, we have\[\lceil |r(K)| \rceil \ge 2g+1.\]
\ecor

Finally, we consider the genus-one knots and propose some questions.

\bprop[Proposition \ref{prop: genus one proof}]\label{prop: genus 1}
Suppose $K\subset S^3$ is a genus-one knot and suppose $\bK$ is a field with $\mathrm{char}(\bK)\neq 2$. Then \[|\nu^\sharp_{\bK}(K)|\le 1.\]
\eprop

\begin{example}
    For $n\in \Z_+$, let $K_{n}$ be the twist knot with $n$ positive half-twist. Note that all $K_n$ are genus-one. Note that $K_1$ is the left-handed trefoil and $K_2$ is the figure-eight knot. From \cite[Theorem 1.13]{baldwin2020concordance}, \cite[Corollaries 1.6 and 1.7]{scaduto2015instantons}, and Proposition \ref{prop: genus 1}, for any field $\bK$ with $\mathrm{char}(\bK)\neq 2$, we have\[r_{\C}(K_{2n-1})=2n-1,~\nu^\sharp_{\C}(K_{2n-1})=-1,~r_{\C}(K_{2n})=2n,~\nu^\sharp_{\C}(K_{2n})=0,\]\[\left(r_{\bK}(K_{2n-1}),\nu^\sharp_{\bK}(K_{2n-1})\right)\in\{(2n-1,-1),(2n,0),(2n+1,1)\},\]\[\aand \left(r_{\bK}(K_{2n}),\nu^\sharp_{\bK}(K_{2n})\right)\in\{(2n-1,1),(2n,0),(2n+1,-1)\}.\]    
\end{example}
Motivated by the above example, it is natural to ask the following question.
\begin{ques}\label{ques: same for all coefficients}
    For any field $\bK$ with $\mathrm{char}(\bK)\neq 2$ and any knot $K\subset S^3$, do we always have the following equations?\[r_{\bK}(K)=r_{\C}(K)\aand \nu^\sharp_\bK(K)=\nu^\sharp_\C(K).\]
\end{ques}
\brem
By footnote of \cite[p. 26]{baldwin2019lspace}, the structure theorem of the cobordism map and the generalized eigenspace decomposition of the instanton homology are expected to hold over any field $\bK$ with $\mathrm{char}(\bK)=0$. By applying techniques in \cite{baldwin2020concordance,baldwin2022concordanceII}, Question \ref{ques: same for all coefficients} might have a positive answer for all fields with $\mathrm{char}(\bK)=0$.
\erem
Motivated by the case $\bK=\F_2$ in \cite{SGMME} and the fact that the figure-eight knot is W-shaped over $\C$ \cite[Theorem 10.4]{baldwin2022concordanceII}, we propose the following question.
\begin{ques}\label{ques: W-shaped}
    If $\nu^\sharp_{\bK}(K)$ is even for some knot $K\subset S^3$ and some field $\bK$, then do we always have that $K$ is W-shaped over $\bK$?
\end{ques}

\subsection{Distance-two surgery exact triangle}

The proof of Theorem \ref{thm: dimension formula, main} relies on some commutative diagrams of instanton cobordism maps from embedded spheres of self-intersection $-1$ and $-2$, where the one for $-1$ follows from the blow-up formula for the Donaldson invariant \cite{Donaldson1990polynomial,DK1992instanton,Ozsvath1994blowup}, and the one for $-2$ is an ingredient in the proof of the \emph{distance-two surgery triangle} over the coefficient field $\F_2$ by Culler--Daemi--Xie \cite[Theorem 1.6 for $N=2$]{CDX2020polygon} and Daemi--Miller-Eismeier--Lidman \cite[Theorem 1.12]{DMML2024distancetwo}. Based on those commutative diagrams, together with the original Floer's surgery exact triangle \cite{floer1990knot,scaduto2015instantons}, we generalize the distance-two surgery triangle to any coefficient ring. To describe the exact triangle, we first introduce the following setup.
\bdefn\label{defn: surgery tuple}
Let $Y$ be a closed connected oriented $3$-manifold and let $\omega\subset Y$ be an unoriented $1$-submanifold. Suppose $K\subset Y$ is a framed knot. We write $Y\bbslash K=Y\backslash{\rm int}N(K)$. Suppose $\mu$ is the meridian of $K$ and $\lambda$ is the framed longitude of $K$, which are both on $\partial (Y\bbslash K)$ and satisfy $\mu\cdot \lambda=-1$. We call $(Y,\omega,K)$ a \emph{surgery tuple} if either of the following conditions hold. 
\begin{itemize}
    \item $(Y,\omega)$ is \emph{nontrivial admissible}, i.e.\ there exists a closed embedded oriented surface $\Sigma\subset Y$ such that the algebraic intersection number $\omega\cdot\Sigma$ is odd. The knot $K$ is disjoint from $\omega$ and $\Sigma$. In this case, we also call $(Y,\omega,K)$ \emph{nontrivial admissible surgery tuple}.
    \item $(Y,\omega)$ is \emph{trivial admissible}, i.e.\ $Y$ is a homology sphere. The knot $K$ is framed by the boundary of a Seifert surface and $\omega\in\{0,\lambda\}$. In this case, we also call $(Y,\omega,K)$ \emph{trivial admissible surgery tuple}.
\end{itemize}
We call the pair $(Y,\omega)$ \emph{admissible} if it is either nontrivially or trivially admissible.

For a surgery tuple, one can consider the surgery manifold $Y_{p/q}(K)$ obtained from $Y$ by $p/q$-surgery on $K$ with respect to the basis $(\mu,\lambda)$. Note that $\mu$ and $\lambda$ also lie in $Y_{p/q}(K)$, and we write $\mu+\lambda$ for the curve obtained from $\mu\cup \lambda$ by resolving the unique intersection point. 

Let $\wti{K}_{p/q}\subset Y_{p/q}(K)$ be the dual knot in the surgery manifold, i.e.\ the core of the Dehn filling solid torus. Note that for nontrivial admissible surgery tuple $(Y,\omega,K)$, the pair $(Y_{p/q}(K),\omega,\wti{K}_{p/q})$ is again a nontrivial admissible surgery tuple. For a trivial admissible surgery tuple $(Y,\omega,K)$, the pair $(Y_{1/q}(K),\omega,\wti{K}_{1/q})$ is again a trivial admissible surgery tuple, and $(Y_0(K),\mu)$ is a nontrivial admissible pair with the surface $\Sigma$ being the cap-off of the Seifert surface of $K$.

For surgery slopes $p_1/q_1$ and $p_2/q_2$ such that $|p_1q_2-p_2q_1|=1$, we write $W^{p_1/q_1}_{p_2/q_2}$ for the elementary cobordism from the surgery, called the \emph{surgery cobordism}. Let $\Dcc{p_1/q_1}{p_2/q_2}$ and $\Dc{p_1/q_1}{p_2/q_2}$ be the cocore disk and the core disk in $W^{p_1/q_1}_{p_2/q_2}$.
\edefn
\brem
Some authors use the notation \emph{admissible bundles} only for nontrivial admissible pairs. Here we follow \cite{scaduto2015instantons,CDX2020polygon} and also include the case of homology spheres in the admissible pair, and call them trivial admissible pairs.
\erem
\bthm[Theorem \ref{thm: distance two}]\label{thm: distance two triangle CDX}
Suppose $(Y,\omega,K)$ is a surgery tuple as in Definition \ref{defn: surgery tuple}. Let $\cR$ be any coefficient ring. Then there exists an exact triangle\begin{equation*}
	\xymatrix{
	I(Y_{-1}(K),\omega;\cR)\ar[rr]^{h}&& I(Y_{1}(K),\omega\cup \mu;\cR)\ar[dl]^{(f_1,f_2)}\\
	&I(Y,\omega;\cR)\oplus I(Y,\omega\cup \lambda;\cR)\ar[lu]^{g_1+g_2}&
	}
\end{equation*}Moreover, the maps $f_1,f_2,g_1,g_2,h$ are instanton cobordism maps with suitable signs (we omit $\cR$)\[f_1=I(W^{1}_{\infty},(\omega\times I)\cup \Dcc{1}{\infty}),~f_2=I(W^{1}_{\infty},(\omega\times I)\cup \Dcc{1}{\infty}\cup \Dcc{1}{\infty}),\]\[g_1=I(W^\infty_{-1},\omega\times I),~g_2=I(W^\infty_{-1},(\omega\times I)\cup \Dc{\infty}{1}),\]\[\aand h:I(W^{0}_{1},(\omega\times I)\cup\Dc{0}{1}\cup \Dcc{0}{1})\circ (W^{-1}_{0},(\omega\times I)\cup\Dcc{-1}{0}).\]
\ethm

\brem\label{rem: octahedral trick}
Our proof of Theorem \ref{thm: distance two triangle CDX} does not rely on the usual triangle detection lemma \cite[Lemma 7.1]{kronheimer2011khovanov}, but uses diagram chasing as in \cite[\S 5]{LY2022integral1} (especially the proof of \cite[Proposition 5.3]{LY2022integral1}). Hence we do not need maps for cobordisms with families of metrics, although they are important for the applications of the distance-two triangle in \cite{DMML2024distancetwo}. If one cares only about the existence of the maps $f_1,f_2,g_1,g_2$, then there is a simpler proof based on the octahedral lemma \cite[Lemma A.3.10]{OSS2015grid}, which does not depend on the commutative diagrams about embedded spheres of self-intersection $-1$ and $-2$. This situation is similar to that in \cite[Remark 3.7]{LY2022integral1}. Finally, note that the map $h$ in \cite{scaduto2015instantons,CDX2020polygon} is from the cobordism map with ``middle end" $\mathbb{RP}^3$, which is expected to be the same as the one we used in a neck-stretching argument along $\mathbb{RP}^3$ and the analysis in the last paragraph of the proof of \cite[Proposition 5.11]{DMML2024distancetwo}.
\erem
\begin{org}
    In \S \ref{sec: bundle sets}, we review the dependence of the bundle sets for instanton Floer homology. In \S \ref{sec: triangle}, we revisit the surgery exact triangle for instanton homology, focusing on the bundle sets of the surgery cobordism maps. In \S \ref{sec: embedded sphere}, we consider the embedded spheres of self-intersection $0,-1,-2$ in the cobordisms and study their effects on instanton cobordism maps. We also prove Theorem \ref{thm: distance two triangle CDX} as a byproduct. The results in \S \ref{sec: bundle sets}-\ref{sec: embedded sphere} work over any coefficient ring $\cR$, while in remaining sections we focus on a field $\bK$. In \S \ref{sec: integer surgeries} and \S \ref{sec: rational surgeries}, we deal with the integral and rational cases of Theorem \ref{thm: dimension formula, main} separately. In \S \ref{sec: SU(2)-representations}, we study the connection between $SU(2)$-representations and instanton homology, and prove Theorems \ref{thm: larger than nu, main} and \ref{thm: abelian main}. In \S \ref{sec: genus 1}, we fix the proof of the instanton bypass exact triangle in \cite[\S 4]{BS2022khovanov}. As a byproduct, we consider the genus-one knots and prove Proposition \ref{prop: genus 1}.
\end{org}
\begin{ack}
The authors thank Sudipta Ghosh and Mike Miller Eismeier for sharing the draft of their paper \cite{SGMME}. The authors also thank John Baldwin, Steven Sivek, and Christopher Scaduto for email communications about bundle sets. The second author is partially supported by Simons Collaboration \#271133 from Peter Kronheimer and is also grateful to Yi Liu for the invitation to BICMR, Peking University while this project is developed. 
\end{ack}

\section{Bundle sets of instanton Floer homology}\label{sec: bundle sets}
In this section, we discuss the bundle sets for instanton Floer homology. By default, every manifold is smooth. We fix a commutative coefficient ring $\cR$ and write $\Mod_\cR$ for the category of $\cR$-modules. We follow the setup in \cite[\S 4.1]{kronheimer2011khovanov} and consider the category $\operatorname{W}$ described as follows.
\begin{itemize}
    \item The objects are nontrivial admissible pairs $(Y,\omega)$ from Definition \ref{defn: surgery tuple}, where $\omega$ is called the \emph{bundle set} in $Y$.
    \item The morphisms are isomorphism classes of pairs $(W,\nu)$, where $W$ is an oriented connected cobordism between the $3$-manifolds in nontrivial admissible pairs and $\nu$ is an unoriented embedded $2$-dimensional simplicial complex inside $W$, serving as a cobordism between bundle sets in nontrivial admissible pairs. We will call $(W,\nu)$ a cobordism between nontrivial admissible pairs, or sometimes simply a \emph{cobordism}. We also call $\nu$ the \emph{bundle set} in $W$.
\end{itemize} 

Instanton Floer theory induces a projective functor\[I(-;\cR):\operatorname{W}\to \Mod_\cR,\]where \emph{projective} means the images of objects and morphisms of the functor are only well-defined up to sign, or, equivalently, we consider the unordered pair of objects and morphisms $\{h,-h\}$ in $\Mod_\cR$. The module $I(Y,\omega;\cR)$ for a nontrivial admissible pair $(Y,\omega)$ is called the \emph{instanton (Floer) homology} of $(Y,\omega)$ and the projective morphism $I(W,\nu;\cR)$ is called an \emph{(instanton) cobordism map}. We will omit $\cR$ in the cobordism map but keep it for instanton homology to indicate the dependence. Here we can replace $\Mod_\cR$ by categories of absolute $\Z/2$-graded or relative $\Z/8$-graded $\cR$-modules. There is also instanton homology for a trivial admissible pair from Definition \ref{defn: surgery tuple}, and cobordism maps between (either nontrivial or trivial) admissible pairs.

The \emph{framed instanton homology} $I^\sharp(Y,\omega;\cR)$ is defined by taking the connected sum of $(Y,\omega)$ with \begin{itemize}
    \item either $((S^3,H),\alpha)$ for a Hopf link $H$ as a singular set and an trivial arc $\alpha$ connecting two components of $H$,
    \item or $(T^3,S^1)$ with larger gauge group corresponding to the dual torus $R$ of $S^1$,
\end{itemize}where $Y$ can be any closed oriented connected $3$-manifold and $\omega$ can be any unoriented $1$-submanifold; see \cite[\S 2.1]{LY2025torsion} for the relation between different definitions. It also has similar properties as $I(Y,\omega;\cR)$, but only relatively $\Z/4$-graded; see \S \ref{subsec: Z/4} for more discussion and applications.

It is worth mentioning that the construction of instanton homology for $(Y,\omega)$ needs an actual $SO(3)$-bundle $P\to Y$, but the pair $(Y,\omega)$ only determines a transitive system of bundles: different bundles are related by canonical cobordisms over $Y\times I$, which lead to canonical isomorphisms (up to signs) for the associated instanton homology. Thus, the instanton homology $(Y,\omega)$ is only well-defined up to sign.

The sign ambiguity can be fixed by further picking a homology orientation or an almost complex structure on the cobordism between admissible pairs, but we are satisfied with the projective property. We can pick representatives in the unordered pair $\{h,-h\}$ of objects and morphisms, at the cost that all equations between cobordism maps only hold up to a sign. 

Recall that $\omega$ in an admissible pair $(Y,\omega)$ is indeed a geometric representative of the Poincar\'{e} duality of the second Stiefel--Whitney class $\omega_2(P)\in H^2(Y;\F_2)$ of the $SO(3)$ bundle $P\to Y$. Hence the isomorphism class of $I(Y,\omega;\cR)$ depends only on the homology class $[\omega]\in H_1(Y;\F_2)$. However, when considering cobordism maps, we need to pin down actual modules instead of just isomorphism classes. We make explicit the effect of changing geometric representatives within the same mod $2$ homology class as follows. We first deal with the dependence of the bundle sets in cobordisms.

\begin{lemma}\label{lem: cob map depending on H_2}
	Let\[(W,\nu_i): (Y_1,\omega_1)\to (Y_2,\omega_2)\text{ for }i=1,2\]be two cobordisms between the same admissible pairs. If \begin{equation}\label{eq: homology condition}
	    [\nu_1 \cup \nu_2] = 0\in H_2(W;\F_2),
	\end{equation}
	\[
		I(W,\nu_1) = \pm I(W,\nu_2): I(Y_1,\omega_1; \cR) \to I(Y_2,\omega_2;\cR).
	\]
\end{lemma}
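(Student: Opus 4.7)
The plan is to reduce the statement to the fact that the instanton cobordism map depends only on the isomorphism class (rel boundary) of the underlying $SO(3)$-bundle on $W$, and then to show that the hypothesis forces the bundles $P_1, P_2 \to W$ associated to $\nu_1, \nu_2$ to be isomorphic, via a Poincar\'e--Lefschetz duality argument.

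First, I would observe that since $(W,\nu_1)$ and $(W,\nu_2)$ are cobordisms between the same admissible pairs $(Y_1,\omega_1)$ and $(Y_2,\omega_2)$, their bundle sets share the same boundary: $\partial \nu_1 = \omega_1 \sqcup \omega_2 = \partial \nu_2$. Hence $\nu_1\cup\nu_2$, viewed as the $\F_2$-chain sum $\nu_1+\nu_2$, is a closed $2$-cycle in $W$, and the hypothesis \eqref{eq: homology condition} states exactly that this cycle is nullhomologous in $W$, so that it bounds a $3$-chain $C$ which (after a small perturbation) may be taken to be an embedded singular $3$-submanifold with $\partial C=\nu_1+\nu_2$ in the interior of $W$.

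Next, I would invoke Poincar\'e--Lefschetz duality $H^2(W;\F_2)\cong H_2(W,\partial W;\F_2)$ together with the natural map $j_*\colon H_2(W;\F_2)\to H_2(W,\partial W;\F_2)$. Under $j_*$ the class $[\nu_1+\nu_2]$ maps to $[\nu_1]+[\nu_2]$, so the hypothesis gives $[\nu_1]=[\nu_2]$ in $H_2(W,\partial W;\F_2)$, equivalently $w_2(P_1)=w_2(P_2)$ in $H^2(W;\F_2)$. Because $SO(3)$-bundles over a compact $4$-manifold with prescribed restriction to the boundary are classified (up to isomorphism) by $w_2$, and the restrictions of $P_1, P_2$ to $\partial W$ are canonically identified through the common boundary data $(Y_i,\omega_i)$, I conclude that there is a bundle isomorphism $\varphi\colon P_1\to P_2$ extending the identity on $\partial W$. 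One can also make $\varphi$ geometrically explicit using the $3$-chain $C$: the characteristic cocycles of $P_1$ and $P_2$ differ by the coboundary dual to $C$, which corresponds to a gauge modification supported near $C$.

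Finally, I would use $\varphi$ to identify the configuration spaces, moduli spaces of anti-self-dual connections, and signed counts (with a common metric and perturbation on $W$) defining $I(W,\nu_1)$ and $I(W,\nu_2)$, so that functoriality of the construction of \cite{kronheimer2011khovanov} under bundle isomorphisms yields $I(W,\nu_1)=\pm I(W,\nu_2)$. The main obstacle I expect is keeping track of the sign: the isomorphism $\varphi$ is only determined up to the gauge group $\Aut(P_1)$, whose components can act nontrivially on the orientation data for the determinant line bundles used to orient the moduli spaces. Since the statement allows a sign and the ambient projective-functor convention already absorbs such ambiguities, this subtlety does not obstruct the conclusion but is the technical point requiring the most care.
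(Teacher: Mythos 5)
Your argument has a genuine gap at its central step, and it is precisely the gap that this lemma (together with Remark \ref{rem: dependence wrong} and Example \ref{exmp: dependence}) is designed to expose. After producing the $3$-chain $C$, you pass through $j_*\colon H_2(W;\F_2)\to H_2(W,\partial W;\F_2)$ and retain only the conclusion $[\nu_1]=[\nu_2]\in H_2(W,\partial W;\F_2)$, i.e.\ $w_2(P_1)=w_2(P_2)\in H^2(W;\F_2)$. This is strictly weaker than the hypothesis whenever $H_2(\partial W;\F_2)\neq 0$, and it is not sufficient: Example \ref{exmp: dependence} exhibits two bundle sets with $[\nu_1]=[\nu_2]\in H_2(W,\partial W;\F_2)$ whose cobordism maps differ even up to sign. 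The false step is the claim that $SO(3)$-bundles with prescribed restriction to $\partial W$ are classified by $w_2\in H^2(W;\F_2)$, from which you conclude that an isomorphism can be chosen ``extending the identity on $\partial W$.'' What the cobordism map actually requires is an isomorphism $P_1\to P_2$ \emph{rel boundary}, compatible with the fixed identifications with the bundles on $Y_1$ and $Y_2$, and the obstruction to extending the identity of $P_1|_{\partial W}=P_2|_{\partial W}$ over $W$ lives in $H^2(W,\partial W;\pi_1(SO(3)))\cong H_2(W;\F_2)$: it is Poincar\'e--Lefschetz dual to the \emph{absolute} class $[\nu_1\cup\nu_2]$, not to its image in $H_2(W,\partial W;\F_2)$. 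The correct argument must therefore use the full hypothesis \eqref{eq: homology condition} directly, which is what the paper does by appealing to \cite[\S 4.1]{kronheimer2011khovanov}: the vanishing of $[\nu_1\cup\nu_2]$ in $H_2(W;\F_2)$ is exactly what makes the two bundles isomorphic rel boundary, up to addition of instantons.

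Two further remarks. Your parenthetical comment that ``the characteristic cocycles differ by the coboundary dual to $C$'' is in fact the germ of the correct proof --- the $3$-chain $C$ exists because of \eqref{eq: homology condition} and not because of the weaker relative condition, and it is what trivializes the relative difference class --- but as written it is offered only as an optional illustration of an isomorphism you have already (incorrectly) asserted to exist on other grounds. Second, even in the absolute setting, $SO(3)$-bundles on a $4$-manifold are not classified by $w_2$ alone, since $p_1$ can vary, as the paper's proof explicitly notes; this is harmless here only because the cobordism map sums over instanton numbers, and that point needs to be stated rather than absorbed into a false classification claim.
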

\bpf
From \cite[\S 4.1]{kronheimer2011khovanov}, the mod $2$ homology class as the dual of the (relative) second Stiefel--Whitney class determines the $SO(3)$ bundle on $W$ up to bundle isomorphisms and addition of instantons and yields cobordism maps up to a sign. Note that the (relative) second Stiefel--Whitney class itself does not determine the bundle up to bundle isomorphisms because the first Pontryagin class could be different.
\epf
\brem\label{rem: dependence wrong}
In \cite[\S 4]{BS2022khovanov} and some subsequent work (e.g.\ \cite[Remark 2.11]{baldwin2019lspace} and \cite[Remark 2.5]{alfieri2020framed}), the authors claimed the cobordism map $I(W,\nu)$ depends only on the homology class $[\nu]\in H_2(W,\partial W;\F_2)$ up to sign, which is indeed not true in general; see Example \ref{exmp: dependence}. In particular, the proof of the bypass exact triangle in \cite[\S 4]{BS2022khovanov} needs to be fixed; see \S \ref{sec: genus 1}. From the long exact sequence associated to the pair $(W,\partial W)$\begin{equation*}\label{eq: long exact for pair}
    H_2(\partial W;\F_2)\to H_2(W;\F_2)\to H_2(W,\partial W;\F_2)
\end{equation*}
The condition \eqref{eq: homology condition} implies that $[\nu_1]=[\nu_2]\in H_2(W,\partial W;\F_2)$, but the converse is not true in general if $H_2(\partial W;\F_2)\neq 0$. Note that $H_2(\partial W;\F_2)=0$ if and only if $b_1=0$ for both components of $\partial W$.

To fix the sign, one needs to choose some homology orientation of $W$ and the bundle set $\nu$ should be oriented and represents a class $H_2(W,\partial W;\Z)$. Similarly, the condition $[\nu_1\cup \nu_2]=0\in H_2(W;\Z)$ instead of $[\nu_1]=[\nu_2]\in H_2(W,\partial W;\Z)$ implies the identification of the cobordism map. For $\nu_1$ and $\nu_2$ on the same $W$ with the same homology orientation and $[\nu_1\cup (-\nu_2)]=2e\in H_2(W;\Z)$, we have\[I(W,\nu_1)=(-1)^{e\cdot e}I(W,\nu_2).\]See the end of \cite[\S 2.3]{KM2022relation}, \cite[Corollary (3.28)]{Donaldson1987orientation}, and \cite[Remark 6.2]{baldwin2019lspace}. In this paper, for simplicity, we will only consider the mod $2$ class and the cobordism maps up to signs.
\erem

Next, we handle the situation where the bundle set on the $3$-manifolds varies within its mod $2$ homology class.
\begin{lemma}\label{lem: null-homologous curves bound embedded surfaces}
	Suppose $Y$ is a closed connected oriented $3$-manifold and $\omega\subset Y$ is unoriented $1$-submanifold such that $[\omega] = 0 \in H_1(Y;\F_2)$. Then there exists an embedded, possibly non-orientable surface $S\subset Y$ such that $\partial S = \omega$.
\end{lemma}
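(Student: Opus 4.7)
The plan is to produce $S$ in three steps: first build a (singular) surface bounding $\omega$, then put it in general position as an immersion, and finally resolve its self-intersections by local cut-and-paste.

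\emph{Step 1 (singular surface).} The hypothesis $[\omega]=0\in H_1(Y;\F_2)$ produces a singular 2-chain $C\in C_2(Y;\F_2)$ with $\partial C=\omega$. Realizing the mod-2 support of $C$ as an immersed 2-complex and thickening along its lower-dimensional strata yields a compact, possibly non-orientable smooth surface-with-boundary $\Sigma$ and a smooth map $f\colon (\Sigma,\partial\Sigma)\to(Y,\omega)$ that is a diffeomorphism onto $\omega$ on the boundary. The cleanest justification uses the Thom isomorphism $\mathfrak{N}_1(Y)\cong H_1(Y;\F_2)$ (low-degree unoriented bordism agrees with $\F_2$-homology, because $\mathfrak{N}_1(\mathrm{pt})=0$), so the vanishing of the bordism class $[\omega\hookrightarrow Y]$ provides exactly such a null-bordism $(\Sigma,f)$.

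\emph{Step 2 (genericity).} I fix a collar of $\partial\Sigma$ on which $f$ is already a diffeomorphism onto a collar of $\omega$ in $Y$. On the interior, standard transversality for smooth maps of a surface into a $3$-manifold lets me homotope $f$ rel boundary to a self-transverse immersion. The singular locus of $f$ in $Y$ is then a $1$-complex: finitely many double-point arcs and circles, with only finitely many triple points, all lying in the interior and disjoint from $\omega$.

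\emph{Step 3 (resolution).} I resolve the self-intersections by purely local cut-and-paste. Away from triple points, the local model of a double curve is two transverse $2$-planes meeting along the curve; in the normal $2$-plane this is an ``$X$'', which I smooth to disjoint arcs. Travelling around a double circle, the monodromy on the two smoothing choices is either trivial (yielding an embedded annular piece) or nontrivial (yielding an embedded M\"obius band); both are permitted because the lemma allows $S$ to be non-orientable. Each triple point is removed by a standard additional local move that replaces three transverse sheets at a point with three disjoint smooth sheets. After all resolutions, the result is a smoothly embedded $2$-submanifold $S\subset Y$ with $\partial S=\omega$.

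I expect Step 3 to be the main technical step, because in general the smoothings of double circles carry a $\Z/2$-obstruction coming from monodromy. The key observation is that this is precisely the obstruction to producing an \emph{orientable} resolution; allowing $S$ to be non-orientable, as in the statement, lets me make arbitrary local smoothing choices and thus absorbs the only real difficulty. The existence of such local resolutions, and that they patch together into a globally embedded surface, are standard results in the theory of immersed surfaces in $3$-manifolds.
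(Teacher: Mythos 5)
Your route --- unoriented bordism $\mathfrak{N}_1(Y)\cong H_1(Y;\F_2)$ to produce a singular surface, then general position, then local resolution of the singular set --- is genuinely different from the paper's, which triangulates $Y$, takes parallel copies of the $2$-simplices of a mod-$2$ $2$-chain, pairs them off along edges, and finishes with innermost-circle surgery on the link spheres of the vertices. Step 1 of your argument is fine (it is essentially the Conner reference the paper mentions), but Steps 2 and 3 each contain a genuine gap. In Step 2, ``standard transversality'' does not make a map of a surface into a $3$-manifold an immersion: since $3<2\cdot 2$, a generic smooth map $\Sigma^2\to Y^3$ has, besides double curves and triple points, isolated stable branch points (Whitney umbrellas), at which a double arc terminates and whose image cannot be resolved by local smoothing. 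To obtain a self-transverse immersion rel boundary you need Hirsch--Smale theory, and there is a relative $\Z/2$ obstruction in $H^2(\Sigma,\partial\Sigma;\Z/2)$ that must be killed (e.g.\ by replacing $\Sigma$ with $\Sigma\#\mathbb{RP}^2$ mapped into a small ball); none of this is addressed.

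The more serious gap is in Step 3, where the obstruction along a double circle is misidentified. The monodromy acts on the two-element set of local smoothings of the transverse ``X'' ($\{xy=\epsilon\}$ versus $\{xy=-\epsilon\}$); when it is nontrivial --- for instance when exactly one of the two sheets is one-sided along the double circle, so the monodromy is a reflection through one branch and sends $xy$ to $-xy$ --- the conclusion is not that the resolution is a M\"obius band, but that \emph{no} consistent local smoothing exists along that circle at all. Allowing $S$ to be non-orientable does not absorb this: the obstruction is to the existence of any embedded local resolution, not to its orientability. (The case you describe as ``nontrivial monodromy yielding a M\"obius band'' is actually a monodromy, such as rotation by $\pi$, that \emph{preserves} each of the two smoothings while reversing orientation of the resolved arcs.) This is precisely the difficulty that the innermost-circle surgery on vertex links in Hatcher's Lemma 3.6, and in the paper's proof, is designed to circumvent, and your argument as written does not circumvent it.
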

\begin{remark}
	This lemma appears to be well known but we were not able to find an explicit proof in the literature to cite. For example, \cite[Theorem 8.3]{conner1979differentiable} indicates the existence of a singular surface but not for embedded surfaces; \cite[Lemma 3.6]{hatcher2007notes} handles the $\mathbb{Z}$ coefficient but not $\mathbb{F}_2$; \cite{boden2022GLPairing} states the precise result but without a proof. Hence we decide to include a proof of the lemma for the sake of completeness of the paper. 
\end{remark}
\begin{proof}[Proof of Lemma \ref{lem: null-homologous curves bound embedded surfaces}]
	We adapt the proof of \cite[Lemma 3.6]{hatcher2007notes}. We triangulate $Y$ so that $\omega$ is in the 1-skeleton. The fact that $[\omega] = 0 \in H_1(Y;\F_2)$ then implies that there exists a $2$-chain $\sigma$ such that
	\[
		\partial \sigma = \omega + 2\cdot \theta
	\]
	for some $1$-chain $\theta$. We then perturb $\sigma$ to obtain a surface $S$ as follows.
	\begin{itemize}
		\item For a $2$-simplex $f$ in $\sigma$, assume that its coefficient is $\lambda \neq 0$. We then take $|\lambda|$ copies of $f$, pushing the interiors of these copies of $f$ disjoint from each other, while keeping the boundary (1-chain) unchanged and still coinciding with $\partial f$.
		\item For a $1$-simplex $e$ in $\sigma$, we pair the two adjacent copies of the (perturbed) 2-simplices and push the interior of the common boundary off the $1$-simplex and make them pairwise disjoint. Note if $e$ is in $\omega$ then it is involved in an odd number of $2$-simplices in $\sigma$, and after pairing, only one is left. If $e$ is not in $\omega$, all $2$-simplices having $e$ as part of their boundary can be paired. Let the new complex after perturbations be $\sigma'$.
		\item For a vertex $v$ involved in $\sigma$, we can take a small ball in $Y$ centered at $v$. The boundary $S^2$ intersects $\sigma'$ at either a (possibly disconnected) simple closed curve if $v$ is not in $\omega$, or a disjoint union of a simple closed curve and a connected single arc if $v$ is in $\omega$. A standard argument enables us to iteratively cut $\sigma'$ along an innermost circle and glue back the disk it bounds (and then push the disk off the sphere $S^2$). The arc component will be left untouched. The final result is an embedded surface $S$ with $\partial S = \omega$, and we are done.
	\end{itemize}
\end{proof}

Now, suppose we have two admissible pairs $(Y,\omega_1)$ and $(Y,\omega_2)$ such that
\[
	[\omega_1] = [\omega_2] \in H_1(Y;\F_2).
\]
Then, by Lemma \ref{lem: null-homologous curves bound embedded surfaces}, there exists an embedded, possibly non-orientable surface $S\subset Y$ such that $\partial S = \omega_1\cup\omega_2$. Inside the product cobordism $Y\times I$, we can make $S$ into a properly embedded surface $\nu_S$ and hence obtain a cobordism \[(Y\times I,\nu_S): (Y,\omega_1) \to (Y,\omega_2).\]Thus we obtain a map
\[
\bI_S=I(Y\times I,\nu_S): I(Y,\omega_1;\cR) \to I(Y,\omega_2;\cR).
\]
\begin{lemma}\label{lem: I_S is an iso}
	Under the above setup, we have the following.
	\begin{itemize}
		\item For any choice of $S$, the map $\bI_S$ is an isomorphism.
		\item If $H_2(Y;\F_2) = 0$ (or equivalently $b_1(Y)=0$), then, up to sign, $\bI_S$ is independent of the choice of $S$. As a consequence, up to a sign, $I(Y,\omega_1;\cR)$ and $I(Y,\omega_2;\cR)$ are canonically isomorphic.
	\end{itemize}
\end{lemma}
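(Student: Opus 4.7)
The plan is to prove (1) by constructing an explicit two-sided inverse to $\bI_S$, and to deduce (2) directly from Lemma \ref{lem: cob map depending on H_2} together with the vanishing of $H_2(Y;\F_2)$.

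For (1), I would interpret the same surface $S \subset Y$ as a cobordism running the other way, producing a properly embedded surface $\nu_{\bar S} \subset Y \times [1,2]$ with $\partial \nu_{\bar S} = \omega_2 \times \{1\} \cup \omega_1 \times \{2\}$, and hence a map $\bI_{\bar S} : I(Y,\omega_2;\cR) \to I(Y,\omega_1;\cR)$. The composition $\bI_{\bar S} \circ \bI_S$ is the cobordism map of $(Y \times [0,2], \nu_S \cup \nu_{\bar S})$, and by Lemma \ref{lem: cob map depending on H_2} it will coincide up to sign with the identity cobordism map of $(Y \times [0,2], \omega_1 \times [0,2])$ provided
\[
    \left[\,(\nu_S \cup \nu_{\bar S}) \cup (\omega_1 \times [0,2])\,\right] = 0 \in H_2(Y \times [0,2]; \F_2).
\]
To check this, I would use that every class in $H_2(Y \times [0,2]; \F_2) \cong H_2(Y;\F_2)$ is detected by intersection with circles $\alpha \times \{t_0\}$ for loops $\alpha \subset Y$ and regular values $t_0$. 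For $t_0 \in (0,1)$ the horizontal slice $\gamma(t_0) = \nu_S \cap (Y \times \{t_0\})$ cobounds a surface in $Y$ with $\omega_1$ via the projection of $\nu_S \cap (Y \times [0, t_0])$, so $[\gamma(t_0)] = [\omega_1] \in H_1(Y;\F_2)$, and the total intersection count is $\alpha \cdot (\gamma(t_0) + \omega_1) \equiv 2(\alpha \cdot \omega_1) \equiv 0 \pmod 2$; the range $t_0 \in (1,2)$ is identical. This gives $\bI_{\bar S} \circ \bI_S = \pm \id$, and the symmetric argument gives $\bI_S \circ \bI_{\bar S} = \pm \id$, establishing that $\bI_S$ is an isomorphism.

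For (2), given two surfaces $S_1, S_2 \subset Y$ with $\partial S_i = \omega_1 \cup \omega_2$, the union $\nu_{S_1} \cup \nu_{S_2} \subset Y \times I$ is a closed mod-$2$ surface, since the boundary components cancel in pairs. Its class in $H_2(Y \times I; \F_2) \cong H_2(Y;\F_2)$ vanishes by the hypothesis $H_2(Y;\F_2) = 0$, so Lemma \ref{lem: cob map depending on H_2} yields $\bI_{S_1} = \pm \bI_{S_2}$, showing that $\bI_S$ furnishes a canonical projective isomorphism between $I(Y,\omega_1;\cR)$ and $I(Y,\omega_2;\cR)$.

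The main obstacle I anticipate is the slicing computation in part (1): the key observation is that every horizontal slice of $\nu_S$ is mod-$2$ homologous to $\omega_1$ via the sub-cobordism $\nu_S \cap (Y \times [0, t_0])$, and the ``doubling'' against $\omega_1 \times I$ then cancels the resulting class mod $2$. This is precisely why the argument works with no assumption on $b_1(Y)$; in contrast, the hypothesis $b_1(Y) = 0$ in (2) is essential because otherwise $[S_1 \cup S_2]$ can genuinely be a nonzero class in $H_2(Y;\F_2)$, obstructing the canonicity of $\bI_S$.
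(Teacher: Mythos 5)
Your overall strategy is the same as the paper's: read $S$ backwards to get $\bI_{\bar S}$, apply Lemma \ref{lem: cob map depending on H_2} to the closed mod-$2$ surface $\Sigma=(\omega_1\times[0,2])\cup\nu_S\cup\nu_{\bar S}$ to conclude that both composites are $\pm\id$, and deduce (2) from $H_2(Y\times I;\F_2)=0$. Part (2) is correct as written, and the paper's proof of (1) is exactly your first paragraph, with the null-homology of $\Sigma$ dismissed as ``straightforward to check.''

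The check you supply, however, does not work. First, the pairings you invoke are dimensionally invalid: a circle $\alpha\times\{t_0\}$ and a $2$-cycle in the $4$-manifold $W=Y\times[0,2]$ are generically disjoint, and the expression $\alpha\cdot(\gamma(t_0)+\omega_1)$ pairs two $1$-cycles inside the $3$-manifold $Y$. Second, and more seriously, slice data cannot detect $[\Sigma]\in H_2(W;\F_2)$ at all: since $Y\times\{t_0\}$ bounds $Y\times[0,t_0]$ rel $\partial W$, the ``intersect with the level $Y\times\{t_0\}$'' map $H_2(W;\F_2)\to H_1(Y;\F_2)$ is identically zero, so \emph{every} closed surface in $W$ has null-homologous slices --- e.g.\ $R\times\{1/2\}$ for a closed non-separating $R\subset Y$, which is nontrivial in $H_2(W;\F_2)$. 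The correct detection is against the relative classes $[\alpha\times[0,2]]\in H_2(W,\partial W;\F_2)$, which generate the Poincar\'e--Lefschetz dual of $H_2(W;\F_2)$. Taking $\nu_S=(\omega_1\times[0,\tfrac12])\cup (S\times\{\tfrac12\})\cup(\omega_2\times[\tfrac12,1])$ and $\nu_{\bar S}$ its mirror, and $\alpha$ transverse to $S$ and disjoint from $\omega_1\cup\omega_2$, one finds $(\alpha\times[0,2])\cap\Sigma=(\alpha\cap S)\times\{\tfrac12,\tfrac32\}$, i.e.\ $2|\alpha\cap S|\equiv 0$ points (equivalently: the projection $W\to Y$ sends $[\Sigma]$ to $2[S]=0$). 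With that repair your argument agrees with the paper's; note that the reason (1) needs no hypothesis on $b_1(Y)$ is this factor of $2$ coming from the two copies of $S$, not the behaviour of horizontal slices.
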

\bpf
The surface $S$ also induces a cobordism \[(Y\times I,\nu'_S):(Y,\omega_2)\to (Y,\omega_1)\]and we write $\bI_S'$ for the corresponding cobordism map. It is straightforward to check that \[\omega_1\times [0,2]\cup\nu_S\cup\nu'_S\]represents the trivial homology class in $H_2(Y\times [0,2];\F_2)\cong H_2(Y;\F_2)$. By Lemma \ref{lem: cob map depending on H_2}, we know the composition \[\bI_S'\circ \bI_S=\pm \id.\]Similarly, we know that \[\bI_S\circ \bI_S'=\pm \id.\]Hence $\bI_S$ must also be an isomorphism.

If further we have $H_2(Y;\F_2) = 0$, then $H_2(Y\times I;\F_2) = 0$ and Lemma \ref{lem: cob map depending on H_2} applies again to show that, up to sign, $\bI_S$ is independent of the choice of $S$.
\epf
\brem\label{rem: b1>0}
In the case of $b_1(Y)>0$, suppose $R$ is an embedded, possibly non-orientable surface in $Y$ representing a nontrivial homology class in $H_2(Y;\F_2)$. Let $\omega\subset R$ be a separating $1$-submanifold such that $R\backslash\omega=R_1\cup R_2$. Then $[\omega]=0\in H_1(Y;\F_2)$, but the isomorphisms \[\bI_{R_1},\bI_{R_2}:I(Y,\omega;\cR)\to I(Y,0;\cR)\]are not necessarily the same up to sign. The best result we obtained from Lemma \ref{lem: cob map depending on H_2} is\[(\bI_{R_1}\circ \bI_{R_2}')^2=\pm \id\aand (\bI_{R_1}\circ \bI_{R_1}')^2=\pm \id,\]where \[\bI_{R_1}',\bI_{R_2}':I(Y,0;\cR)\to I(Y,\omega;\cR).\]
\erem

We conclude this section with an example that is related to Remark \ref{rem: dependence wrong}.
\begin{example}\label{exmp: dependence}
	We present an explicit example for which we have two bundle sets $\nu_1,\nu_2$ in some cobordism $W$ such that
	\[
		[\nu_1] = [\nu_2] \in H_2(W,\partial W;\F_2).
	\]
	yet they lead to non-identical cobordism maps even allowing sign ambiguity. We take $\cR=\C$.

	Suppose $K\subset S^3$ is the torus knot $T_{2,5}$. It is a genus-two L-space knot in the sense of \cite{baldwin2019lspace}. Indeed, one may construct more examples from instanton L-space knots of genera larger than two. We consider the surgery cobordism associated to the $0$-surgery on $K$
	\[
		W=W^{\infty}_0: S^3\to S^3_{0}(K)
	\]and take $\wti{\nu}_0$ from \cite[Formula 3.3]{baldwin2022concordanceII} as the bundle set. We consider the cobordism map\[I^\sharp(W,\wti{\nu}_0):I^\sharp(S^3)\to I^\sharp(S^3_0(K),\mu),\]where $\mu$ is a meridian of $K$. Indeed, from Remark \ref{rem: BS notation}, we know that $\wti{\nu}_0$ is the cocore disk, though here we do not use this fact.
    
    Let $\widebar{\Sigma}\subset W$ be the closed oriented surface that is the union of a genus-two Seifert surface of $K$ and a core disk. Note that it is also isotopic to the cap-off of the Seifert surface $\widebar{\Sigma}'$ in $S_0^3(K)$. Hence $\widebar{\Sigma}\cdot \widebar{\Sigma} = 0$, where $\cdot$ denotes the pairing of $H_2(W,\partial W;\Z)$ and $H_2(W;\Z)$.

    Note that $\widebar{\Sigma}$ represents a generator of\[\Z\cong H_2(S_0^3(K);\Z)\xra{\cong}H_2(W;\Z).\]Let\[
		\begin{aligned}
			s_i: H_2(W;\mathbb{Z}) &\to 2\mathbb{Z}\\
			[\widebar{\Sigma}]&\to 2i.
		\end{aligned}
	\]From \cite[Theorem 1.16]{baldwin2019lspace} and the facts that $b_1(W)=0$, $g(\widebar{\Sigma})=2$ and $\widebar{\Sigma}\cdot \widebar{\Sigma} = 0$, we obtain a decomposition of the cobordism map\begin{equation}\label{eq: decomposition}
	    I^\sharp(W,\wti{\nu}_0)=\sum_{i=-1}^1 I^\sharp(W,\wti{\nu}_0;s_i),
	\end{equation}where the image of $I^\sharp(W,\wti{\nu}_0;s_i)$ lies in the $(2,2i)$-generalized eigenspace of the actions $(\mu(\pt),\mu(\widebar{\Sigma}'))$. Moreover, from \cite[Theorem 1.18]{baldwin2020concordance} and \cite[Proposition 4.3]{baldwin2022concordanceII}, we know that each term in the decomposition \eqref{eq: decomposition} is nonvanishing.

    From \cite[Formula (7.4)]{baldwin2019lspace}, we have $\wti{\nu}_0\cdot \widebar{\Sigma}=1$ (the bundle set $\wti{\nu}_0$ was written as $\nu_0$ in \cite[Formula (7.2)]{baldwin2019lspace}). From \cite[Theorem 1.16 (5)]{baldwin2019lspace}, we have\[I^\sharp(W,\wti{\nu}_0\cup \widebar{\Sigma})=\sum_{i=-1}^1 I^\sharp(W,\wti{\nu}_0\cup \widebar{\Sigma};s_i)=\sum_{i=-1}^1 (-1)^{i+1}I^\sharp(W,\wti{\nu}_0;s_i),\]which cannot be equal to $I^\sharp(W,\wti{\nu}_0)$, even up to sign.

    On the other hand, from the long exact sequence of the pair $(W,\partial W)$, we have an isomorphism
	\[
		H_2(W,\partial W;\F_2) \xra{\cong} H_1(\partial W;\F_2) \cong H_1 (S^3_0(K);\F_2)\cong \F_2.
	\]
	Thus, we conclude that \[[\wti{\nu}_0] = [\wti{\nu}_0 \cup \widebar{\Sigma}]\in H_2(W, \partial W;\F_2)\]because they have the same boundary. The crucial point in this example is that the map\[H_2(W;\F_2)\to H_2(W,\partial W;\F_2)\]in the long exact sequence vanishes, though both are isomorphic to $\F_2$.
\end{example}

\section{Surgery exact triangle revisited}\label{sec: triangle}
In this section, we revisit the surgery exact triangle for instanton homology. We first focus on the general case and then specialize to the case of surgeries on knots in $S^3$. Again, we fix a coefficient ring $\cR$.

\bprop[{\cite[Theorem 2.1]{scaduto2015instantons}}]\label{prop: surgery exact triangle original}
Suppose $(Y,\omega,K)$ is a surgery tuple as in Definition \ref{defn: surgery tuple}. Recall that $\mu,\lambda\subset \partial (Y\bbslash K)$ are the meridian and the framed longitude of $K$, respectively. Then there exists an exact triangle
\begin{equation}\label{eq: surgery exact triangle, original}
	\xymatrix{
	I(Y,\omega;\cR)\ar[rr]^{F^\infty_0}&& I(Y_0(K),\omega\cup \mu;\cR)\ar[dl]^{F^0_1}\\
	&I(Y_1(K),\omega;\cR)\ar[lu]^{F^1_\infty}&
	}
\end{equation}
where the maps are cobordism maps associated to the surgery cobordisms with certain bundle sets. We will call the components in the bundle set other than $\omega$ and $\omega\times I$ the \emph{extra} bundle sets.
\eprop
Based on the discussion in \cite[\S 2.3]{alfieri2020framed}, we obtain the following result about the bundle sets.

\blem\label{lem: bundle set}
The bundle sets for the cobordism maps in \eqref{eq: surgery exact triangle, original} are described as follows.
\begin{itemize}
    \item The bundle set for $F^\infty_0$ consists of $\omega\times I$ and the cocore disk $\Dcc{\infty}{0}$.
    \item The bundle set for $F^0_1$ consists of $\omega \times I$ and the core disk $\Dc{0}{1}$.
    \item The bundle set for $F^1_\infty$ consists of $\omega \times I$.
\end{itemize}
\elem
\bpf
The first two cases follow directly from the discussion in \cite[\S 2.3]{alfieri2020framed}, though one needs to be careful about the issue in Remark \ref{rem: dependence wrong}. For the sake of completeness of the paper, we state the proof more explicitly as follows. 

The first case follows from \cite[\S 3.7]{scaduto2015instantons} directly.  Note that the meridian $\mu$ is isotopic to the dual knot $\wti{K}_0\subset Y_0(K)$, which is the boundary of the cocore disk in the first cobordism. 

In the second case, the bundle set is originally described as the union $(\omega\cup \wti{K}_0)\times I$ and the cocore disk $\Dcc{0}{1}$. Note that the boundary of the cocore disk is the dual knot $\wti{K}_1\subset Y_1(K)$, which is isotopic to $\lambda$ in $Y_1(K)$. Let $D_1\subset Y_0(K)$ be a properly embedded disk in the Dehn-filling solid torus. Note that the outgoing end of the bundle set is $\omega\cup \mu\cup \lambda$. To make the target of $F^0_1$ become $I(Y_1(K),\omega;\cR)$, we also need to add $D_1$ into the bundle set, which induces an isomorphism by Lemma \ref{lem: I_S is an iso}.

We also consider disks $D_\infty\subset Y=Y_\infty(K)$ and $D_0\subset Y_0(K)$ similarly. Then the cocore disk $\Dcc{0}{1}$ is isotopic to $(\lambda\times I)\cup D_0$ and the core disk $\Dc{0}{1}$ is isotopic to $((\mu+\lambda)\times I)\cup D_1\cup D_0$, where the $D_0$ in the cocore disk is to make its boundary become $\mu$ instead of $\mu+\lambda$. Hence the bundle set for $F^0_1$ is\[\begin{aligned}
    ((\omega\cup \mu)\times I)\cup \Dcc{0}{1}\cup D_1)=&((\omega\cup \mu\cup \lambda)\times I)\cup D_0\cup D_1\\=&((\omega\cup (\mu+ \lambda))\times I)\cup D_0\cup D_1\\=&(\omega\times I)\cup \Dc{0}{1}.
\end{aligned},\]where we implicitly use Lemma \ref{lem: cob map depending on H_2} in the second equation.

A similar trick involving the isotopies of the cocore disk and the core disk can be applied to the third case. The bundle set in the third case is originally described as the union $(\omega\cup (\mu+ \lambda))\times I$ and the cocore disk $\Dcc{1}{\infty}$, whose incoming end is $\omega\cup (\mu+\lambda)$ and outgoing end is $\omega\cup 2\cdot (\mu+\lambda)$. To make the source and the target of $F^1_{\infty}$ compatible with \eqref{eq: surgery exact triangle, original}, we need to add $D_1$ for the incoming end and add a trivial annulus $A$ with boundary $2\cdot(\mu+\lambda)$ for the outgoing end. The cocore disk is isotopic to $((\mu+ \lambda)\times I)\cup D_1$. Hence the bundle set for $F^1_\infty$ is\[\begin{aligned}
   ((\omega\cup (\mu+ \lambda))\times I)\cup \Dcc{1}{\infty}\cup D_1\cup A=&  (\omega\times I)\cup 2\cdot(\mu+\lambda)\times I\cup 2\cdot D_1\cup A.
\end{aligned}\]Note that $2\cdot(\mu+\lambda)\times I\cup 2\cdot D_1\cup A$ bounds an embedded $3$-sphere in the cobordism and hence represents the trivial mod $2$ homology class. By Lemma \ref{lem: cob map depending on H_2}, this union does not affect the cobordism map up to sign. Hence the bundle set for $F^1_\infty$ is just $\omega\times I$.
\epf
To specify the bundle sets in the cobordism maps, we introduce the following definition.
\bdefn\label{defn: bundle core cocore}
For $\ep=\ep_1\ep_2\in\{00,01,10,11\}$ and $|p_1q_2-p_2q_1|=1$, we write $F^{p_1/q_1}_{p_2/q_2}(\ep)$ for the cobordism map associated to the surgery cobordism from $Y_{p_1/q_1}(K)$ and $Y_{p_2/q_2}(K)$ with certain bundle sets, where $\ep_1$ and $\ep_2$ denote the presence of the core disk and the cocore disk, respectively. The product $\omega\times I$ will always be included in the bundle set.
\edefn
By varying the choice of $\omega$ in Proposition \ref{prop: surgery exact triangle original}, we can obtain exact triangles with the same vertices and different maps. We first consider the case when $(Y,\omega,K)$ is a nontrivial admissible surgery tuple. Note that adding any unoriented $1$-submanifold $\omega'\subset Y$ disjoint from $\Sigma$ to $\omega$ in Definition \ref{defn: surgery tuple} yields another nontrivial admissible surgery tuple $(Y,\omega\cup \omega',K)$. Since $\mu\cdot \lambda=-1$, we know that $\mu$ and $\lambda$ generate \begin{equation}\label{eq: H1}
    H_1(\partial (Y\bbslash K);\F_2)\cong \F_2\op \F_2.
\end{equation}Hence there are four natural choices of $\omega'$ that represent the four elements in \eqref{eq: H1}, namely\begin{equation}\label{eq: extra bundle}
    \omega'=0,\mu,\mu+\lambda,\lambda.
\end{equation}Hence we have the following proposition.
\bprop\label{prop: more triangles}
Suppose $(Y,\omega,K)$ is a nontrivial admissible surgery tuple as in Definition \ref{defn: surgery tuple}. Then we have the following exact triangles.
\begin{equation}\label{eq: surgery exact triangle, more 1}
	\xymatrix{
	I(Y,\omega;\cR)\ar[rr]^{F^\infty_0(01)}&& I(Y_0(K),\omega\cup \mu;\cR)\ar[dl]^{F^0_1(10)}\\
	&I(Y_1(K),\omega;\cR)\ar[lu]^{F^1_\infty(00)}&
	}
\end{equation}
\begin{equation}\label{eq: surgery exact triangle, more 2}
	\xymatrix{
	I(Y,\omega;\cR)\ar[rr]^{F^\infty_0(00)}&& I(Y_0(K),\omega;\cR)\ar[dl]^{F^0_1(01)}\\
	&I(Y_1(K),\omega\cup \mu;\cR)\ar[lu]^{F^1_\infty(10)}&
	}
\end{equation}
\begin{equation}\label{eq: surgery exact triangle, more 3}
	\xymatrix{
	I(Y,\omega\cup \lambda;\cR)\ar[rr]^{F^\infty_0(10)}&& I(Y_0(K),\omega;\cR)\ar[dl]^{F^0_1(00)}\\
	&I(Y_1(K),\omega;\cR)\ar[lu]^{F^1_\infty(01)}&
	}
\end{equation}
\begin{equation}\label{eq: surgery exact triangle, more 4}
	\xymatrix{
	I(Y,\omega\cup \lambda;\cR)\ar[rr]^{F^\infty_0(11)}&& I(Y_0(K),\omega\cup \mu;\cR)\ar[dl]^{F^0_1(11)}\\
	&I(Y_1(K),\omega\cup \mu;\cR)\ar[lu]^{F^1_\infty(11)}&
	}
\end{equation}
If $(Y,\omega,K)$ is a trivial admissible surgery tuple as in Definition \ref{defn: surgery tuple}, then we only have \eqref{eq: surgery exact triangle, more 1} and \eqref{eq: surgery exact triangle, more 4} as $(Y_0(K),\omega=0)$ is not admissible.
\eprop
\brem
For triangles in Proposition \ref{prop: more triangles}, the extra bundle set in each vertex other than $\omega$ is isotopic to the dual knot in the corresponding surgery manifold. The first three triangles appeared in \cite[Figure 1]{scaduto2015instantons}, where the second and the third are obtained from the first one by changing the nontrivial admissible surgery tuple. Here we obtain those two triangles by modifying $\omega$. The fourth triangle is new, but may be well-known to experts.
\erem
\bpf[Proof of Proposition \ref{prop: more triangles}]
We only consider the case of a nontrivial admissible surgery tuple. The case of a trivial admissible surgery tuple is similar. The first triangle follows directly from Proposition \ref{prop: surgery exact triangle original} and Lemma \ref{lem: bundle set}, with the notation from Definition \ref{defn: bundle core cocore}. The remaining three triangles are obtained from the first one by adding $\omega'$ in \eqref{eq: extra bundle} to $\omega$ and applying the identification of bundle sets in the proof of Lemma \ref{lem: bundle set}. More explicitly, we again suppose $D_0,D_1,D_\infty$ are disks in the Dehn filling solid tori of $Y_0(K),Y_1(K),Y=Y_\infty(K)$, respectively. Then we have the following identifications for core disks and cocore disks.\begin{equation}\label{eq: core and cocore}
    \begin{aligned}
        \Dc{\infty}{0}=&(\lambda\times I)\cup D_0,~\Dcc{\infty}{0}=(\mu\times I)\cup D_\infty,\\\Dc{0}{1}=&((\mu+\lambda)\times I)\cup D_1\cup D_0,~\Dcc{0}{1}=(\lambda\times I)\cup D_0\cup D_1,\\\Dc{1}{\infty}=&(\mu\times I)\cup D_\infty,\aand \Dcc{1}{\infty}=((\mu+\lambda)\times I)\cup D_1\cup D_\infty.
    \end{aligned}
\end{equation}where $D_0\subset \Dc{0}{1}$, $D_1\subset \Dcc{0}{1}$, and $D_\infty\subset \Dcc{1}{\infty}$ are added so that the ends of the bundle sets coincide with the dual knot representatives in \[\wti{K}_\infty=K=\lambda\subset Y=Y_\infty(K),~\wti{K}_0=\mu\subset Y_0(K),\aand \wti{K}_1=\mu\subset Y_1(K),\]Finally, we apply Lemmas \ref{lem: cob map depending on H_2} and \ref{lem: I_S is an iso} to obtain the results.
\epf

Using Lemmas \ref{lem: cob map depending on H_2} and \ref{lem: I_S is an iso}, we can further modify the exact triangles in Proposition \ref{prop: more triangles}. The ``half lives, half dies" theorem shows that \begin{equation}\label{eq: half live half die}
    \ker\left(H_1(\partial (Y\bbslash K;\F_2)\to H_1(Y\bbslash K;\F_2)\right)\cong \F_2.
\end{equation}Then we have the following.
\begin{itemize}
    \item $[K]=[\lambda]\neq 0\in H_1(Y;\F_2)$ if and only if $[\mu]$ generates \eqref{eq: half live half die};
    \item $[\wti{K}_0]=[\mu]\neq 0\in H_1(Y_0(K);\F_2)$ if and only if $[\lambda]$ generates \eqref{eq: half live half die};
    \item $[\wti{K}_1]=[\mu]\neq 0\in H_1(Y_1(K);\F_2)$ if and only if $[\mu+\lambda]$ generates \eqref{eq: half live half die}.
\end{itemize}In the nontrivial admissible surgery tuple case, we know that exactly one of the above cases will happen. In the trivial admissible surgery tuple case, we know that the second case happens. Without loss of generality, we assume that the second case happens. The following corollary follows directly from Lemmas \ref{lem: cob map depending on H_2} and \ref{lem: I_S is an iso}, and from homology computations.

\bcor\label{cor: more triangles 2}
Suppose $(Y,\omega,K)$ is a nontrivial admissible surgery tuple as in Definition \ref{defn: surgery tuple}. Suppose $[\lambda]$ generates \eqref{eq: half live half die} and hence also trivial in $H_1(Y;\F_2)$ and suppose $S\subset Y$ is a surface from Lemma \ref{lem: null-homologous curves bound embedded surfaces}. Suppose $D_1\subset Y_1(K)$ is the meridian disk and let $S+D_1$ be the surface obtained from $S\cup D_1$ by resolving intersection points. Then \eqref{eq: surgery exact triangle, more 2}, \eqref{eq: surgery exact triangle, more 3}, \eqref{eq: surgery exact triangle, more 4} reduce to the following triangles.
\begin{equation}\label{eq: surgery exact triangle, more 2 2}
	\xymatrix{
	I(Y,\omega;\cR)\ar[rr]^{F^\infty_0(00)}&& I(Y_0(K),\omega;\cR)\ar[dl]^{\quad\bI_{S+D_{1}}\circ F^0_1(01)}\\
	&I(Y_1(K),\omega;\cR)\ar[lu]^{F^1_\infty(10)\circ \bI_{S+D_{1}}'}&
	}
\end{equation}
\begin{equation}\label{eq: surgery exact triangle, more 2 3}
	\xymatrix{
	I(Y,\omega;\cR)\ar[rr]^{F^\infty_0(10)\circ \bI_{S}'}&& I(Y_0(K),\omega;\cR)\ar[dl]^{F^0_1(00)}\\
	&I(Y_1(K),\omega;\cR)\ar[lu]^{\bI_S\circ F^1_\infty(01)}&
	}
\end{equation}
\begin{equation}\label{eq: surgery exact triangle, more 2 4}
	\xymatrix{
	I(Y,\omega;\cR)\ar[rr]^{F^\infty_0(11)\circ \bI_{S}'}&& I(Y_0(K),\omega\cup \mu;\cR)\ar[dl]^{\quad\bI_{S+D_{1}}\circ F^0_1(11)}\\
	&I(Y_1(K),\omega;\cR)\ar[lu]^{\bI_S\circ F^1_\infty(11)\circ \bI_{S+D_{1}}'}&
	}
\end{equation}
Moreover, we have identifications of maps\begin{equation}\label{eq: identification cob}
    F^1_\infty(10)\circ \bI_{S+D_{1}}'=\pm \bI_S\circ F^1_\infty(01)\aand \bI_S\circ F^1_\infty(11)\circ \bI_{S+D_{1}}'=\pm F^1_{\infty}(00).
\end{equation}
If $(Y,\omega,K)$ is a trivial admissible surgery tuple as in Definition \ref{defn: surgery tuple}, then we only have \eqref{eq: surgery exact triangle, more 2 4} and the second equation in \eqref{eq: identification cob}.
\ecor
\brem\label{rem: other Floer}
One can compare triangles in Proposition \ref{prop: more triangles} and Corollary \ref{cor: more triangles 2} with the existing surgery exact triangles in Heegaard Floer theory and monopole Floer theory. The triangles in those theories are first established over $\F_2$ \cite{ozsvath2005double,kronheimer2007monopolesandlens}, where an essential ingredient is that a cobordism with an embedded sphere of self-intersection $-1$ induces a cobordism map with a multiple of $2$, which vanishes over $\F_2$. This is not true in general in instanton theory; see Remark \ref{rem: blow up}. This multiple of $2$ cannot be canceled by altering the homology orientation on the cobordism, and one must use twisted coefficients to establish the triangle when the characteristic of the coefficient ring is not $2$. Note that the twisted coefficient is roughly the analog of the extra bundle set. The case of Heegaard Floer theory was recently resolved by Abouzaid--Manolescu \cite[\S 6]{AM2025sign}, where they used a trick that is similar to \eqref{eq: surgery exact triangle, more 2 2} and \eqref{eq: surgery exact triangle, more 2 3}, i.e.\ pick the twisted coefficient cleverly so that the Floer homologies remain the same but two of the cobordism maps are modified and the compositions of two consecutive cobordism maps vanish. The case of monopole Floer theory was resolved over $\Q$ by Lin--Ruberman--Saveliev and over $\mathbb{Z}[i]$ by Freeman \cite{Freeman2021triangle}. Note that the one over $\Q$ is obtained by adding certain signs to the components of the cobordism maps, which might be induced from some twisted coefficients. The one over $\mathbb{Z}[i]$ is similar to \eqref{eq: surgery exact triangle, more 4}, where the twist coefficient is chosen for cycles from the dual knots and the union of the core and cocore disks. From \eqref{eq: surgery exact triangle, more 2 4}, at least one of the Floer homologies is not isomorphic to those without local coefficients.
\erem

\brem\label{rem: BS notation}
Other than \eqref{eq: identification cob}, there is no further relation for cobordism maps, even when the source and the target are the same, because the difference of the bundle set represents nontrivial mod $2$ homology class. Baldwin--Sivek \cite[\S 2.2]{baldwin2020concordance} used the trick of adding extra bundle sets to make all three vertices in the exact triangle have no extra bundle sets, which is indeed either \eqref{eq: surgery exact triangle, more 2 2} or \eqref{eq: surgery exact triangle, more 2 3}. More explicitly, from \cite[Equations (2.3) and (2.4)]{baldwin2020concordance}, for a knot $K\subset S^3$ and the cobordism $W^\infty_n:S^3\to S^3_n(K)$, we have the following result.

\begin{itemize}
    \item When $n$ is even, we use \eqref{eq: surgery exact triangle, more 2 2} and \eqref{eq: surgery exact triangle, more 1} with\[(Y,Y_0(K),Y_1(K))=(S^3,S_n^3(K),S_{n+1}^3(K))\]for the case where the extra bundle set is trivial and the meridian $\mu$, respectively.
    \item When $n$ is odd, we use \eqref{eq: surgery exact triangle, more 2 3} and \eqref{eq: surgery exact triangle, more 1} with \[(Y,Y_0(K),Y_1(K))=(S_n^3(K),S_{n+1}^3(K),S^3)\]for the case where the extra bundle set is trivial and the meridian (now it is $\mu+\lambda$ instead of $\mu$), respectively. 
\end{itemize}

Hence our notation of the bundle set in Definition \ref{defn: bundle core cocore} is related to Baldwin--Sivek's notation via\[I^\sharp(X_n,\nu_n)=F^{\infty}_{n}(00)\aand I^\sharp(X_n,\wti{\nu}_n)=F^{\infty}_{n}(01)\text{ when }n\text{ is even,}\]\[I^\sharp(X_n,\nu_n)=\bI_{S_n}\circ F^{\infty}_{n}(01)=\pm F^\infty_n(10)\circ \bI'_{S}\aand\]\[ I^\sharp(X_n,\wti{\nu}_n)=F^{\infty}_{n}(00)=\pm \bI_{S_n}\circ F^1_{\infty}(11)\circ \bI'_{S}\text{ when }n\text{ is odd;}\]where $S$ is a Seifert surface of $K$, $S_n$ is obtained from $S$, the meridian disk $D_n\subset S^3_n(K)$, and $|n-1/2|$ annuli with boundary $2\cdot \mu$ by resolving intersection points. Note that $\wti{\nu}_n$ is only defined for $n=0,-1$ in \cite{baldwin2022concordanceII} but it can be extended to any integer $n$ by considering the triangle obtained from adding the meridian ($\mu$ when $n$ is even and $\mu+\lambda$ when $n$ is odd) to the bundle set.
\erem

Finally, we consider surgery exact triangle for framed instanton homology of knot surgeries. We fix a knot $K\subset S^3$ and take its meridian $\mu$ and the Seifert longitude $\lambda$ with $\mu\cdot \lambda=-1$. We write\[S^3\bbslash K=S^3\backslash \mathrm{int}N(K)\]for the knot complement. 

For a surgery slope $r=p/q\in\mathbb{Q}\cup\{\infty=1/0\}$, we call it \emph{even} or \emph{odd} if the numerator $p$ is even or odd, respectively. 

For a fixed coefficient ring $\cR$, we write
\begin{equation}\label{eq: H notation}
    \begin{aligned}
    \cH(r,\omega)=&I^{\sharp}(S^3_{r}(K),\omega;\cR),\\
        \cH(r) = I^{\sharp}(S^3_{r}(K),\omega=0;\cR)&\aand\wti{\cH}(r) = I^{\sharp}(S^3_{r}(K),\omega=\wti{K}_r;\cR)
    \end{aligned}
\end{equation}
where the dual knot $\wti{K}_r$ is the bundle set rather than singular locus as in \cite[\S 4.3]{kronheimer2011khovanov}. If we want to specify the knot $K$, then we use $\cH(K,r,\omega)$, $\cH(K,r)$, and $\tcH(K,r)$ instead.

Note that the dual knot $\wti{K}_r\subset S^3_r(K)$ represents a nontrivial mod $2$ homology class if and only if $r$ is even. Due to \eqref{eq: homology of surgery} and Lemma \ref{lem: I_S is an iso}, we always have \[[\wti{K}_r]=[\mu]\in H_1(S^3_r(K);\F_2),\]and there exists a canonical isomorphism\begin{equation*}\label{eq: canonical iso for surgery}
    \tcH(r)\cong \cH(r,\mu),
\end{equation*}
In particular, when $r\in\Z$, we pick $\wti{K}_r=\mu$ and when $r=\infty$, we pick $\wti{K}_\infty=\lambda$.

Then we have the following proposition analogous to Proposition \ref{prop: more triangles} and Corollary \ref{cor: more triangles 2}.

\bprop[{\cite[\S 7.5]{scaduto2015instantons}}]\label{prop: surgery exact triangle}
For $i=0,1,2$, suppose $p_i$ and $q_i$ are co-prime integers and suppose $r_i=p_i/q_i$. If $p_iq_{i+1}-p_{i+1}q_i=1$ for all $i\in\Z/3$, we call $(r_0,r_1,r_2)$ a \emph{slope triad}. In such a case, there exist exact triangles
\begin{equation*}
	\xymatrix{
	\cH(r_0)\ar[rr]^{F^{r_0}_{r_1}(01)}&& \tcH(r_1)\ar[dl]^{F^{r_1}_{r_2}(10)}\\
	&\cH(r_2)\ar[lu]^{F^{r_2}_{r_0}(00)}&
	}
    \xymatrix{
	\cH(r_0)\ar[rr]^{F^{r_0}_{r_1}(00)}&& \cH(r_1)\ar[dl]^{F^{r_1}_{r_2}(01)}\\
	&\tcH(r_2)\ar[lu]^{F^{r_2}_{r_0}(10)}&
	}
\end{equation*}
\begin{equation*}
	\xymatrix{
	\tcH(r_0)\ar[rr]^{F^{r_0}_{r_1}(10)}&& \cH(r_1)\ar[dl]^{F^{r_1}_{r_2}(00)}\\
	&\cH(r_2)\ar[lu]^{F^{r_2}_{r_0}(01)}&
	}
    \xymatrix{
	\tcH(r_0)\ar[rr]^{F^{r_0}_{r_1}(11)}&& \tcH(r_1)\ar[dl]^{F^{r_1}_{r_2}(11)}\\
	&\tcH(r_2)\ar[lu]^{F^{r_2}_{r_0}(11)}&
	}
\end{equation*}
where the notation $F^{r_i}_{r_{i+1}}(\ep)$ is from Definition \ref{defn: bundle core cocore}. Furthermore, if $r_1$ is even, or equivalently $r_0$ and $r_2$ are odd, then we can replace all $\tcH(r_0)$ and $\tcH(r_2)$ by $\cH(r_0)$ and $\cH(r_2)$, respectively and there are identifications of maps\begin{equation}\label{eq: id of maps}
    F^{r_2}_{r_0}(10)=\pm F^{r_2}_{r_0}(01)\aand F^{r_2}_{r_0}(00)=\pm F^{r_2}_{r_0}(11).
\end{equation}
\eprop

\section{Embedded spheres in cobordisms}\label{sec: embedded sphere}
In this section, we review relations for cobordism maps when there exist embedded spheres of self-intersection $0,-1,-2$ and prove Theorem \ref{thm: distance two triangle CDX} as a byproduct. Again, we fix a coefficient ring $\cR$.

\blem\label{lem: embedded sphere}
Suppose $(W,\nu):(Y_0,\omega_0)\to (Y_1,\omega_1)$ is a cobordism between admissible pairs. Suppose $S\subset W$ is an embedded sphere with $S\cdot S=k$ and $|\nu\cap S|=l$. Then we have the following relations for the cobordism map\[I(W,\nu):I(Y_0,\omega_0;\cR)\to I(Y_1,\omega_1;\cR).\]
\begin{enumerate}
    \item\label{t1} If $k=0$ and $l$ is odd, then $I(W,\nu)=0$.
    \item\label{t2} If $k=-1$ and $l$ is odd, then $I(W,\nu)=0$.
    \item\label{t3} If $k=-1$ and $l=0$, and $(W,\nu)\cong (W',\nu')\#(\overline{\mathbb{CP}^2},0)$ for another cobordism $(W',\nu')$ between the same admissible pairs, then $I(W,\nu)=\pm I(W',\nu')$.
    \item\label{t4} If $k=-2$ and $l$ is odd, then $I(W,\nu)=\pm I(W,\nu\cup S)$. 
\end{enumerate}
\elem
\bpf
All terms follow from the neck-stretching argument and the analysis in the neighborhood of $S$. Term \eqref{t1} follows directly from the proof of \cite[Lemma 3.10]{LY2025torsion}. Term \eqref{t2} follows from \cite[\S 5.2]{scaduto2015instantons}, which is an important step in the proof of the surgery exact triangle. 

Term\eqref{t3} follows from the blow-up formula for the Donaldson invariant (for dimension-zero part of the moduli space); see \cite[Theorem (4.8)]{Donaldson1990polynomial}, \cite[Proposition (9.3.14)]{DK1992instanton}, \cite{Ozsvath1994blowup}, and \cite[p. 942--943]{Kronheimer1997obstruction} for the closed case and \cite[Proposition 5.2 (2)]{kronheimer2011knot} for the cobordism case. Note that it is not the blow-up formula for Donaldson series as in \cite[\S 5.1]{baldwin2019lspace}, which only works over $\C$. 

Term\eqref{t4} follows from the last paragraph in the proof of \cite[Proposition 5.11]{DMML2024distancetwo}; see also \cite[Equation (6.32)]{CDX2020polygon}.
\epf
\brem\label{rem: blow up}
Lemma \ref{lem: embedded sphere}\eqref{t2}\eqref{t3} over $\C$ also follows from \cite[Theorem 1.16 (4)]{baldwin2019lspace}, which provides a blow-up formula for the decomposition of the cobordism map analogous to spin$^c$ decomposition in Heegaard Floer and monopole Floer theories. The factor $1/2$ in \cite[Theorem 1.16 (4)]{baldwin2019lspace} makes the instanton theory different from the other two Floer theories; compare \cite[Theorem 1.4]{OS2006smooth} and \cite[Theorem 39.3.1 and Equation (39.6)]{kronheimer2007monopoles}.
\erem
In our previous work \cite[\S 3.3]{LY2025torsion}, we use Lemma \ref{lem: embedded sphere}\eqref{t1} for $\cR=\F_2$ to derive that compositions of maps from surgery exact triangles vanish. Indeed, this vanishing result also follows from Lemma \ref{lem: embedded sphere}\eqref{t2}\eqref{t3}. We describe the situation more explicitly as follows, since it provides some clue to Theorem \ref{thm: distance two triangle CDX} and we will use the idea to study rational surgeries in \S \ref{sec: rational surgeries}.

We start with the setup of Proposition \ref{prop: surgery exact triangle}, i.e.\ for $i=0,1,2$, suppose $p_i$ and $q_i$ are co-prime integers with\begin{equation}\label{eq: condition p q}
    p_0q_1-p_1q_0=p_1q_2-p_2q_1=p_2q_0-p_0q_2=1.
\end{equation}Note that $p_i/q_i=(-p_i)/(-q_i)$, but the equations in \eqref{eq: condition p q} no longer hold if we replace $(p_i,q_i)$ by $(-p_i,-q_i)$. We have\[(-p_2)q_1-(-q_2)p_1=1,\]and we can find another (unique) pair of co-prime integers $(p_3,q_3)$ such that\[p_3(-q_2)-(-p_2)q_3=p_1q_3-p_3q_1=1.\]Take $r_i=p_i/q_i$ for $i=0,1,2,3$ (e.g. $(r_0,r_1,r_2,r_3)=(-1,0,\infty,1)$), we have the following surgery triads of $3$-manifolds
\begin{equation}\label{eq: extended triad}
	\xymatrix{
		&S^3_{r_0}(K)\ar[dr]^{W^{r_0}_{r_1}}&\\
		S^3_{r_2}(K)\ar@<-2pt>[rr]_{W^{r_2}_{r_1}}\ar[ur]^{W^{r_2}_{r_0}} && S^3_{r_1}(K)\ar[dl]^{W^{r_1}_{r_3}}\ar@<-2pt>[ll]_{W^{r_1}_{r_2}}\\
		&S^3_{r_3}(K)\ar[lu]^{W^{r_3}_{r_2}}&
	}
\end{equation}such that we can apply the exact triangles in Proposition \ref{prop: surgery exact triangle} to either surgery triad. To obtain embedded spheres as in Lemma \ref{lem: embedded sphere}, we consider the compositions of two surgery cobordisms in \eqref{eq: extended triad} and take the union of the cocore disk in the first cobordism and the core disk in the second cobordism. We denote the embedded sphere by\begin{equation}
    S(a,b,c)=\Dcc{a}{b}\cup \Dc{b}{c}\subset W^b_c\circ W^a_b
\end{equation}for $W^b_c$ and $W^a_b$ in \eqref{eq: extended triad} and write $S(a,b,c)^2$ for its self-intersection number. Then we know from Kirby calculus (in particular, taking $(r_0,r_1,r_2,r_3)=(-1,0,\infty,1)$ and using the slam-dunk) that \begin{equation}\label{eq: self intersection}
\begin{aligned}
    0=&S(r_1,r_2,r_1)^2=S(r_2,r_1,r_2)^2\\
    -1=&S(r_0,r_1,r_2)^2=S(r_1,r_2,r_0)^2=S(r_2,r_0,r_1)^2\\
    -1=&S(r_2,r_1,r_3)^2=S(r_1,r_3,r_2)^2=S(r_3,r_2,r_1)^2\\
    -2=&S(r_0,r_1,r_3)^2=S(r_3,r_2,r_0)^2.
\end{aligned}
\end{equation}
Moreover, we have\begin{equation}\label{eq: composition blowup}
    W^{r_2}_{r_1}=(W^{r_0}_{r_1}\circ W^{r_2}_{r_0})\#\overline{\mathbb{CP}^2}\aand     W^{r_1}_{r_2}=(W^{r_3}_{r_2}\circ W^{r_1}_{r_3})\#\overline{\mathbb{CP}^2}
\end{equation}

If we apply the first triangle in Proposition \ref{prop: surgery exact triangle} to the top triad of \eqref{eq: extended triad} and the fourth triangle in Proposition \ref{prop: surgery exact triangle} to the bottom triad of \eqref{eq: extended triad}, then we obtain the following.
\begin{equation}\label{eq: two triangles}
    \xymatrix{
		\tcH(r_2)\ar[rr]^{F^{r_2}_{r_1}(11)} && \tcH(r_1)\ar[dl]^{F^{r_1}_{r_3}(11)}\ar[rr]^{F^{r_1}_{r_2}(10)}&& \cH(r_2)\ar[dl]^{F^{r_2}_{r_0}(00)}\\
		&\tcH(r_3)\ar[lu]^{F^{r_3}_{r_2}(11)}&&\cH(r_0)\ar[lu]^{F^{r_0}_{r_1}(01)}&
	}
\end{equation}
The main usage of Lemma \ref{lem: embedded sphere}\eqref{t1} in \cite[\S 3.3]{LY2025torsion} is the vanishing result of the composition\begin{equation}\label{eq: vanishing}
    F^{r_1}_{r_2}(10)\circ F^{r_2}_{r_1}(11)=0,
\end{equation}because $S(r_2,r_1,r_2)^2=0$ and the bundle set intersects $S(r_2,r_1,r_2)$ at one point (the intersection of the core disk and the cocore disk in $W^{r_2}_{r_1}$). Alternatively, we can derive the vanishing result from\begin{equation}\label{eq: alternative vanish}
    F^{r_1}_{r_2}(10)=\pm F^{r_3}_{r_2}(10)\circ F^{r_1}_{r_3}(11)\aand F^{r_1}_{r_3}(11)\circ F^{r_2}_{r_1}(11)=0,
\end{equation}where the first equation is from Lemma \ref{lem: embedded sphere}\eqref{t3} and \eqref{eq: composition blowup} and the second equation follows from Lemma \ref{lem: embedded sphere}\eqref{t2} or the exactness of the triangle. We perform careful computation of the bundle sets for the cobordisms in the first equation of \eqref{eq: alternative vanish} as follows.

\blem\label{lem: blow-up many cases}
Suppose $(r_1,r_3,r_2)$ is a slope triad as in Proposition \ref{prop: surgery exact triangle}. Then we have\[F^{r_1}_{r_2}(00)=\pm F^{r_3}_{r_2}(00)\circ F^{r_1}_{r_3}(00),F^{r_1}_{r_2}(10)=\pm F^{r_3}_{r_2}(10)\circ F^{r_1}_{r_3}(11)\]\[F^{r_1}_{r_2}(01)=\pm F^{r_3}_{r_2}(11)\circ F^{r_1}_{r_3}(01)\aand F^{r_1}_{r_2}(11)=\pm F^{r_3}_{r_2}(01)\circ F^{r_1}_{r_3}(10)\]
\elem
\bpf
The first equation follows directly from Lemma \ref{lem: embedded sphere}\eqref{t3} and \eqref{eq: self intersection}. The rest equations follow similarly, but need more computations on the bundle sets. We do the computation for the second equation carefully as follows. The computations for the last two equations are similar and we omit them.

Up to a change of basis for $\partial (S^3\bbslash K)$, we can assume $(r_1,r_3,r_2)=(0,1,\infty)$ and apply the identifications in \eqref{eq: core and cocore}, that is\[\begin{aligned}
    \Dc{r_1}{r_2}=& (\mu \times [0,2])\cup D_\infty,~\Dc{r_3}{r_2}=(\mu \times [1,2])\cup D_\infty,\\\Dc{r_1}{r_3}=&((\mu+\lambda) \times [0,1])\cup D_1\cup D_0,\aand \Dcc{r_1}{r_3}=(\lambda \times [0,1])\cup D_0\cup D_1,
\end{aligned}\]where we replace $I$ by actual intervals to indicate that it is in different cobordisms. Hence the bundle set in $F^{r_3}_{r_2}(10)\circ F^{r_1}_{r_3}(11)$ is \[\Dc{r_3}{r_2}\cup \Dc{r_1}{r_3}\cup \Dcc{r_1}{r_3}=\mu\times [0,2]\cup D_\infty\cup 2\cdot (\lambda\times [0,1])\cup 2\cdot D_0\cup 2\cdot D_1.\]The pieces with multiple $2$ can be removed by Lemma \ref{lem: cob map depending on H_2}, and the union of the rest pieces is disjoint from $S(r_1,r_3,r_2)$ and becomes $\Dc{r_1}{r_2}$ after blow-down.
\epf

By \eqref{eq: two triangles} and \eqref{eq: vanishing} with $(r_0,r_1,r_2,r_3)=(-1,0,\infty,1)$, together with the octahedral lemma \cite[Lemma A.3.10]{OSS2015grid}, we obtain the following exact triangle\begin{equation*}
	\xymatrix{
	\cH(-1)\ar[rr]^{F^{0}_{1}(11)\circ F^{-1}_{0}(01)}&& \tcH(1)\ar[dl]^{f}\\
	&\cH(\infty)\oplus \tcH(\infty)\ar[lu]^{g}&
	}
\end{equation*}
for some \emph{abstract} maps $f$ and $g$, which means those maps are not necessarily cobordism maps. Replacing Proposition \ref{prop: surgery exact triangle} with Proposition \ref{prop: more triangles}, we obtain the exact triangle in Theorem \ref{thm: distance two triangle CDX} except the explicit descriptions of $f$ and $g$. Note that the proof also works for a trivial admissible surgery pair, as we only use the first and the fourth exact triangles. This is the simpler proof mentioned in the \ref{rem: octahedral trick}.

To prove Theorem \ref{thm: distance two triangle CDX} with all maps identified as cobordism maps, we need to further use Lemma \ref{lem: embedded sphere}\eqref{t4} and diagram chasing as in the proof of \cite[Proposition 5.3]{LY2022integral1}.
\bthm\label{thm: distance two}
Suppose $(Y,\omega,K)$ is a surgery tuple as in Definition \ref{defn: surgery tuple}. Then there exists an exact triangle\begin{equation*}\label{eq: surgery exact triangle, revisited}
	\xymatrix{
	I(Y_{-1}(K),\omega;\cR)\ar[rr]^{F^{0}_{1}(11)\circ F^{-1}_{0}(01)}&& I(Y_{1}(K),\omega\cup \mu;\cR)\ar[dl]^{\quad\quad \left(F^1_\infty(10),F^1_\infty(11)\right)}\\
	&I(Y,\omega;\cR)\oplus I(Y,\omega\cup \lambda;\cR)\ar[lu]^{F^\infty_{-1}(00)+F^\infty_{-1}(10)\quad\quad}&
	},
\end{equation*}where the maps are from Definition \ref{defn: bundle core cocore}, with suitable choice of signs.
\ethm
\bpf
Replacing Proposition \ref{prop: surgery exact triangle} with Proposition \ref{prop: more triangles} in \eqref{eq: two triangles} and taking \[(r_0,r_1,r_2,r_3)=(-1,0,\infty,1),\] we obtain the following two triangles (we omit $\cR$). Recall that when $r\in\Z$, we pick $\wti{K}_r=\mu$ and when $r=\infty$, we pick $\wti{K}_\infty=\lambda$.

\begin{equation}\label{eq: two triangles 2}
    \xymatrix{
		I(Y,\omega\cup \lambda)\ar[rr]^{F^{\infty}_{0}(11)} && I(Y_0(K),\omega\cup \mu)\ar[dl]^{F^{0}_{1}(11)}\ar[rr]^{F^{0}_{\infty}(10)}&& I(Y,\omega)\ar[dl]^{F^{\infty}_{-1}(00)}\\
		&I(Y_1(K),\omega\cup \mu)\ar[lu]^{F^{1}_{\infty}(11)}&&I(Y_{-1}(K),\omega)\ar[lu]^{F^{-1}_{0}(01)}&
	}
\end{equation}

From Lemma \ref{lem: embedded sphere}\eqref{t4} and \eqref{eq: self intersection}, we have\[F^\infty_{-1}(00)\circ F^1_\infty(10)=\pm F^\infty_{-1}(10)\circ F^1_\infty(11).\]By suitably choosing signs, we assume that\begin{equation}\label{eq: -2 application}
    F^\infty_{-1}(00)\circ F^1_\infty(10)+F^\infty_{-1}(10)\circ F^1_\infty(11)=0.
\end{equation}Then we use diagram chasing to prove the exactness at all three vertices. We will use the exactness in \eqref{eq: two triangles 2} without mentioning it.

First, we consider the vertex $I(Y,\omega)\oplus I(Y,\omega\cup \lambda)$. Suppose\[(x,y)\in \ker\left(F^\infty_{-1}(00)+F^\infty_{-1}(10)\right),\]\[\text{i.e.}~F^\infty_{-1}(00)(x)+F^\infty_{-1}(10)(y)=0.\]We prove that\begin{equation}\label{eq: result 1}
    (x,y)\in \img \left(F^1_\infty(10),F^1_\infty(11)\right).
\end{equation}Indeed, we have \[\begin{aligned}
    0=&F^{-1}_0(01)(0)\\
    =&F^{-1}_0(01)\circ \left(F^\infty_{-1}(00)(x)+F^\infty_{-1}(10)(y)\right)\\
    =&F^{-1}_0(01)\circ F^\infty_{-1}(10)(y)\\
    =&\pm F^\infty_{0}(11)(y),
\end{aligned}\]where the last equation is from Lemma \ref{lem: blow-up many cases}. Then there exists $z\in I(Y_1(K),\omega\cup \mu)$ such that \[F^{1}_{\infty}(11)(z)=y.\]Then we have\[\begin{aligned}
    F^\infty_{-1}(00)\left(x-F^1_\infty(10)(z)\right)=&F^\infty_{-1}(00)(x)+F^\infty_{-1}(10)\circ F^1_\infty(11)(z)\\=&F^\infty_{-1}(00)(x)+F^\infty_{-1}(10)(y)\\=&0,
\end{aligned}\]where the first equation is from \eqref{eq: -2 application}. Then there exists $w\in I(Y_0(K),\omega\cup \mu)$ such that\[F^0_{\infty}(10)(w)=x-F^1_\infty(10)(z).\]Then we have\[F^{1}_{\infty}(11)\left(z\pm  F^0_{1}(11)(w)\right)=F^{1}_{\infty}(11)(z)=y,\aand\]\[
    F^1_\infty(10)\left(z+ F^0_{1}(11)(w)\right)=F^1_\infty(10)(z)\pm F^0_\infty(10)(w),\]where the last equation is from Lemma \ref{lem: blow-up many cases}. If the sign is positive, then the existence of $z+ F^0_{1}(11)(w)$ verifies \eqref{eq: result 1}. If the sign is negative, then the existence of $z- F^0_{1}(11)(w)$ verifies \eqref{eq: result 1}, which concludes the proof of the exactness at $I(Y,\omega)\oplus I(Y,\omega\cup \lambda)$.

Second, we consider the exactness at $I(Y_{-1}(K),\omega)$. We will use Corollary \ref{cor: more triangles 2} and Lemma \ref{lem: blow-up many cases} freely. We have\[F^0_1(11)\circ F^{-1}_0(01)\circ F^\infty_{-1}(00)=F^0_1(11)(0)=0,\aand\]
\[F^0_1(11)\circ F^{-1}_0(01)\circ F^\infty_{-1}(10)=\pm F^0_1(11)\circ F^0_1(11)=0.\]Suppose\[u\in \ker\left(F^0_1(11)\circ F^{-1}_0(01)\right).\]Then there exists $y\in I(Y,\omega\cup\lambda)$ such that\[F^\infty_0(11)(y)=F^{-1}_0(01)(u).\]Then we have\[F^{-1}_0(01)\left(u+F^\infty_{-1}(10)(y)\right)=F^{-1}_0(01)(u)\pm F^\infty_0(11)(y)=0.\]Hence there exists $x\in I(Y,\omega)$ such that\[F^\infty_{-1}(00)(x)=u+F^\infty_{-1}(10)(y)~\mathrm{or}~u-F^\infty_{-1}(10)(y),\]which implies\[u\in \img\left(F^\infty_{-1}(00)+F^\infty_{-1}(10)\right)\]and concludes the proof.

Finally, we consider the exactness at $I(Y_1(K),\omega\cup\mu)$. We have\[F^1_\infty(10)\circ F^0_1(11)\circ F^{-1}_0(01)=\pm F^0_\infty(10)\circ F^{-1}_0(01)(0)=0,\aand\]
\[F^1_\infty(11)\circ F^0_1(11)\circ F^{-1}_0(01)=0\circ F^{-1}_0(01)=0.\]Suppose\[v\in \ker\left (F^1_\infty(10),F^1_\infty(11)\right).\]Then there exists $w\in I(Y_0(K),\omega\cup \mu)$ such that \[F^0_1(11)(w)=v.\]Moreover, we have\[F^0_\infty(10)(w)=\pm F^1_\infty(10)\circ F^0_1(11)(w)=\pm F^1_\infty(10)(v)=0.\]Then there exists $u\in I(Y_{-1},\omega)$ such that\[F^{-1}_0(01)(u)=w\]and hence\[v\in \img\left (F^0_1(11)\circ F^{-1}_0(01)\right),\]which concludes the proof.
\epf
\section{Integer surgeries}\label{sec: integer surgeries}
From now on, we consider a coefficient field $\bK$ instead of a general coefficient ring $\cR$. In \cite[Proposition 1.1]{LY2025torsion}, we showed that for $\bK=\F_2$, the sequence $\{\dim \cH(n)\}_{n\in\Z}$ is either V-shaped, W-shaped, or generalized W-shaped. In this section, we first recap this result in any field $\bK$, then eliminate the case of being generalized W-shaped, and finally study the cases of V-shaped and W-shaped more carefully. Following \eqref{eq: H notation}, we fix a knot $K\subset S^3$ and a field $\bK$, and write\[\begin{aligned}
\cH(r,\omega)&=\cH(K,r,\omega)=I^\sharp(S^3_r(K),\omega;\bK),\\
    \cH(r)&=\cH(K,r)=I^\sharp(S^3_r(K),\omega=0;\bK)\\\aand \tcH(r)&=\tcH(K,r)=I^\sharp(S^3_r(K),\omega=\wti{K}_r;\bK)\cong I^\sharp(S^3_r(K),\mu;\bK).
\end{aligned}\]Since we only consider the case $r=n\in\Z$ in this section and $\wti{K}_n=\mu$, we use $\cH(r,\mu)$ instead of $\tcH(r)$.

The main result of this section is the following.

\bprop\label{prop: dim formula for integers}
	Suppose $K\subset S^3$ is a knot and $\bK$ is a field. There exists a concordance invariant $\nk{K}\in\Z$ satisfying $\nk{\widebar{K}}=\nk{K}$ for the mirror knot $\widebar{K}$. Moreover, for \begin{equation*}
	    r_{\bK}(K)=\min \left\{\dim \cH(\nk{K}),\dim \cH(\nk{K},\mu)\right\},
	\end{equation*}we have
	\begin{equation}\label{eq: dim for integer}
	    \dim \cH(n) = \dim \cH(n,\mu) = r_{\bK}(K) + |n-\nk{K}|
	\end{equation}
	for any integer $n\neq \nk{K}$. Furthermore, if $\nk{K}$ is odd, then \eqref{eq: dim for integer} also holds for $n=\nk{K}$. If $\nk{K}$ is even, then we have
	\begin{equation*}\label{eq: pm 2}
	    \{\dim \cH(\nk{K}),\dim \cH(\nk{K},\mu)\}=\{r_{\bK}(K),r_{\bK}(K)+2\}.
	\end{equation*}
\eprop
\bpf
This is re-statement of Propositions \ref{prop: concordance invariants}, \ref{prop: shape of the dimension sequence}, \ref{prop: differed at most by 2}, and \ref{prop: V and W}, with\[
	\nu^{\sharp}_{\bK}(K) = \frac{1}{2}(\nkp{K} + \nkm{K}).
\]
\epf

\subsection{Three kinds of shapes}
We first sketch the analogous results in the proof of \cite[Proposition 1.1]{LY2025torsion} for an arbitrary field $\bK$.

\begin{lemma}\label{lem: differ by 1}
	For any integer $n$, we have 
	\[
		\dim \cH(n+1,\omega) = \dim \cH(n,\omega') \pm 1.
	\]
	for arbitrary choices of bundle sets $\omega$ and $\omega'$.
\end{lemma}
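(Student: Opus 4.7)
The plan is to apply the surgery exact triangle to the slope triad containing $n$, $n+1$, and $\infty$, then sharpen the resulting rank inequality with a parity argument coming from Euler characteristics.

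First I would observe that by \eqref{eq: homology of surgery} together with Lemma \ref{lem: I_S is an iso}, the dimension $\dim \cH(m,\omega)$ depends only on $[\omega] \in H_1(S^3_m(K);\F_2)$, which takes at most two values (corresponding to the trivial class and to $[\mu]$). So it suffices to prove the lemma for the (at most four) combinations where $\omega \in \{0,\mu\}$ and $\omega' \in \{0,\mu\}$, and each such combination is realized by one of the four exact triangles of Proposition \ref{prop: surgery exact triangle} applied to the triad containing the slopes $n$, $n+1$, and $\infty$. Since $S^3_\infty(K) = S^3$ is a homology sphere, every flavor $I^\sharp(S^3,\nu;\bK)$ appearing at the third vertex is canonically isomorphic to $I^\sharp(S^3;\bK) = \bK$ by Lemma \ref{lem: I_S is an iso}, and in particular has dimension one.

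Next I would extract the standard rank inequality from the resulting exact triangle, giving
\[
|\dim \cH(n+1,\omega') - \dim \cH(n,\omega)| \leq \dim I^\sharp(S^3;\bK) = 1.
\]
To promote this to equality of $\pm 1$, I would use the Euler characteristic formula of Remark \ref{rem: inequality}, namely $\chi(\cH(m,\omega)) = |m|$ for every integer $m$ and every bundle set. This forces the parity $\dim \cH(m,\omega) \equiv m \pmod 2$, so $\dim \cH(n+1,\omega') - \dim \cH(n,\omega)$ is odd. Combined with the bound above, it must be exactly $\pm 1$.

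The only real point requiring attention is bookkeeping: one must match the desired bundle sets $(\omega,\omega')$ with the correct triangle among the four of Proposition \ref{prop: surgery exact triangle}, and verify that changing the actual geometric representative of $\omega$ or $\omega'$ within its mod-$2$ homology class does not alter the dimensions. Both are handled by Lemma \ref{lem: I_S is an iso}. No further input is needed; the argument is purely formal once the surgery triangle and Scaduto's Euler characteristic calculation are in place.
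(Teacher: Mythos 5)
Your proof is correct and follows essentially the same route as the paper: apply the surgery exact triangle for the triad containing $n$, $n+1$, $\infty$ (in all four bundle-set flavors of Proposition \ref{prop: surgery exact triangle}), use $\dim I^\sharp(S^3;\bK)=1$ at the third vertex, and use \eqref{eq: homology of surgery} with Lemma \ref{lem: I_S is an iso} to reduce to the two mod-$2$ classes of bundle sets. The only difference is that your appeal to Scaduto's Euler characteristic for the parity step is unnecessary (though correct), since exactness already forces $\dim A+\dim B+\dim C$ to be even, so the two dimensions in question differ by an odd number that is at most $1$.
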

\begin{proof}
	This follows directly from \eqref{eq: homology of surgery}, Proposition \ref{prop: more triangles}, and the fact that
	\[
		\dim I^{\sharp}(S^3;\bK) = 1.
	\]
\end{proof}


\begin{lemma}\label{lem: monotone for large enough n}
	There exists an integer $N>0$ such that for any fixed choice of the bundle set $\omega$, we have the following results.
	\begin{itemize}
		\item For any integer $n>N$, we have
		\[
			\dim \cH(n+1,\omega) = \dim \cH(n,\omega) + 1
		\]
		\item For any integer $n<-N$, we have 
		\[
			\dim \cH(n-1,\omega) = \dim \cH(n,\omega) + 1
		\]
	\end{itemize} 
\end{lemma}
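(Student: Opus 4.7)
The plan is to combine the Euler characteristic computation from Remark \ref{rem: inequality} with Lemma \ref{lem: differ by 1} to force monotonicity of the dimensions at infinity. By that remark, for any bundle set $\omega$ one has $\chi(\cH(n, \omega)) = |n|$, so we may write
\[
\dim \cH(n, \omega) = |n| + 2\, k_n(\omega)
\]
for some non-negative integer $k_n(\omega) \in \Z_+$.

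First I would fix $\omega$ and analyse large $n > 0$. Applying Lemma \ref{lem: differ by 1} with the same type of bundle set on both $S^3_n(K)$ and $S^3_{n+1}(K)$ gives $\dim \cH(n+1, \omega) - \dim \cH(n, \omega) = \pm 1$, which upon substituting the parametrization above reduces to
\[
1 + 2\bigl(k_{n+1}(\omega) - k_n(\omega)\bigr) = \pm 1.
\]
Hence $k_{n+1}(\omega) - k_n(\omega) \in \{0, -1\}$, so $\{k_n(\omega)\}_{n \geq 1}$ is a non-increasing sequence of non-negative integers and must eventually be constant beyond some threshold $N_+(\omega)$. For $n \geq N_+(\omega)$ the sign is pinned to $+1$, yielding $\dim \cH(n+1, \omega) = \dim \cH(n, \omega) + 1$.

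Next I would run the symmetric argument for $n < 0$, where $\dim \cH(n, \omega) = -n + 2\,k_n(\omega)$. Comparing the dimensions at $n$ and $n-1$ gives $k_{n-1}(\omega) - k_n(\omega) \in \{0, -1\}$, so $\{k_n(\omega)\}_{n \leq -1}$ is non-increasing as $n$ decreases toward $-\infty$, stabilizes beyond some $-N_-(\omega)$, and forces $\dim \cH(n-1, \omega) = \dim \cH(n, \omega) + 1$ for sufficiently negative $n$. Since by \eqref{eq: homology of surgery} the only homology classes of bundle set that produce distinct dimensions are $\omega = 0$ and $\omega = \mu$, taking $N$ to be the maximum of the four thresholds $N_\pm(0)$ and $N_\pm(\mu)$ provides the uniform bound stated in the lemma.

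The only subtlety lies in applying Lemma \ref{lem: differ by 1} with matching bundle-set types on the two sides of the comparison, which is automatic because that lemma asserts the $\pm 1$ relation for arbitrary choices. Beyond that, the argument reduces to the elementary observation that a non-negative integer sequence whose consecutive differences lie in $\{0, -1\}$ must eventually be constant, so I do not anticipate any serious obstacle.
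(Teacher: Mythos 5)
Your argument is correct, and it takes a genuinely different route from the paper's. The paper's proof bootstraps from the known monotonicity over $\C$ (citing \cite[Theorem 1.1]{baldwin2020concordance} and \cite[Theorem 1.12]{baldwin2022concordanceII}, which give monotonicity for $|n|>N_{\C}$) together with the universal coefficient theorem: the difference $\dim I^{\sharp}(S^3_n(K),\omega;\bK) - \dim_{\C} I^{\sharp}(S^3_n(K),\omega;\C)$ is a non-negative integer which, by Lemma \ref{lem: differ by 1}, changes by $0$ or $-2$ at each step beyond $N_{\C}$, hence stabilizes; after stabilization the sign is pinned to $+1$. You replace the role of the $\C$-dimension as a lower bound by the Euler characteristic bound $\dim \cH(n,\omega)\ge |\chi(\cH(n,\omega))|=|n|$ from Remark \ref{rem: inequality} (ultimately \cite[Corollary 1.4]{scaduto2015instantons} plus the mod $2$ grading), and run the identical stabilization argument on the excess $k_n(\omega)=\tfrac{1}{2}(\dim\cH(n,\omega)-|n|)$. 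Structurally both proofs are the same "non-increasing sequence of non-negative integers stabilizes" argument driven by Lemma \ref{lem: differ by 1}; they differ only in the choice of reference lower bound. Your version is more self-contained for this particular lemma, since it does not invoke the Baldwin--Sivek structure results over $\C$ (which rest on the eigenspace decomposition and are used elsewhere in the paper anyway); it only needs the Euler characteristic computation and the parity of $\dim$ coming from the $\Z/2$-grading, both of which the paper already records. Your handling of the two homology classes of bundle sets and of the negative direction is also fine.
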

\bpf
\cite[Theorem 1.1]{baldwin2020concordance} and \cite[Theorem 1.12]{baldwin2022concordanceII} imply that when $\bK = \mathbb{C}$, such a monotonicity condition holds for $|n|>N_{\mathbb{C}}$ for some sufficiently large fixed integer $N_{\mathbb{C}}$. From the universal coefficient theorem, we know that the difference
\[
\dim I^{\sharp}(S^3_n(K),\omega;\bK) - \dim_{\mathbb{C}} I^{\sharp}(S^3_n(K),\omega;\mathbb{C})
\]
is a finite non-negative integer, which is non-increasing as $n$ increases and $n > N_{\mathbb{C}}$ (resp. as $n$ decreases and $n< -N_{\mathbb{C}}$) by Lemma \ref{lem: differ by 1}. Thus, it will eventually stabilize, which concludes the proof of the lemma.
\epf


\begin{definition}
Suppose $K\subset S^3$ is a knot. Define
\[
\nkp{K} = \min\{n~|~\forall k\ge n,~\dim \cH(k+1) = \dim \cH(k)+1\},
\]
\[
\aand \nkm{K} = \max\{n~|~\forall k\le n,~ \dim \cH(k-1) =\dim \cH(k)+1\}.
\]Note that $\nu^{\sharp,\bK}_{\pm}(\widebar{K})=-\nu^{\sharp,\bK}_{\mp}(K)$ for the mirror knot $\widebar{K}$ because $\dim \cH(\widebar{K},n)=\dim \cH(K,-n)$.
\end{definition}

\begin{proposition}\label{prop: concordance invariants}
	The invariants $\nu^{\sharp,\bK}_{\pm}(K)$ are concordance invariants.
\end{proposition}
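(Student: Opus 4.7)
The plan is to adapt the concordance-invariance argument of Baldwin--Sivek~\cite{baldwin2020concordance}, which established the case $\bK=\C$ via their generalized eigenspace decomposition, to an arbitrary coefficient field by using the surgery exact triangles of Propositions~\ref{prop: more triangles} and~\ref{prop: surgery exact triangle}, the monotonicity of Lemma~\ref{lem: monotone for large enough n}, and the functoriality of instanton cobordism maps. Given a smooth concordance $C\subset S^3\times[0,1]$ from $K_0$ to $K_1$, I would form, for each $n\in\Z\cup\{\infty\}$, the trace cobordism $W_n(C)\colon S^3_n(K_0)\to S^3_n(K_1)$ obtained by performing $n$-surgery along the annulus $C$. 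These cobordisms induce maps
\[
\phi_n\colon \cH(K_0,n)\to\cH(K_1,n),\qquad \wti{\phi}_n\colon \tcH(K_0,n)\to\tcH(K_1,n),
\]
together with reverses $\psi_n,\wti{\psi}_n$ coming from $-C\colon K_1\to K_0$.

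My main claim is that $\phi_n$ and $\wti{\phi}_n$ are isomorphisms for every $n\in\Z$. Once established, this yields $\dim\cH(K_0,n)=\dim\cH(K_1,n)$ and $\dim\tcH(K_0,n)=\dim\tcH(K_1,n)$ for all $n$, so the definition of $\nkp{K}$ as the smallest $n$ past which the trivial-bundle dimension sequence grows monotonically by one gives $\nkp{K_0}=\nkp{K_1}$, and symmetrically for $\nkm$. I would prove the claim by descending induction on $|n|$. At the base case $n=\infty$, $W_\infty(C)\cong S^3\times[0,1]$, so $\phi_\infty=\id$ on $I^\sharp(S^3;\bK)=\bK$. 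For $|n|$ sufficiently large, the monotonicity in Lemma~\ref{lem: monotone for large enough n}, combined with the Baldwin--Sivek concordance invariance over $\C$ and the nondegeneracy of the composition $\psi_n\circ\phi_n$ on $\cH(K_i,n)$, forces $\phi_n$ to be an isomorphism. For the inductive step, functoriality of cobordism maps produces a commutative ladder between the surgery exact triangles for $K_0$ and $K_1$ at the triad $(\infty,n,n+1)$, and the five lemma propagates the isomorphism from $\phi_\infty$ and $\phi_{n+1}$ down to $\phi_n$; the argument for $\wti{\phi}_n$ is parallel.

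The main obstacle I anticipate is the careful bookkeeping of bundle-set decorations. Each triangle in Proposition~\ref{prop: more triangles} carries a specific configuration of core, cocore, and dual-knot bundle sets, and verifying that the concordance traces intertwine these decorations up to sign requires the identifications of Section~\ref{sec: triangle}, in particular Lemmas~\ref{lem: cob map depending on H_2}, \ref{lem: I_S is an iso}, and~\ref{lem: blow-up many cases}. A secondary subtlety is the case $n=0$, where $W_0(C)$ is not a rational homology cobordism; here the isomorphism $\phi_0$ must be deduced indirectly by applying the five lemma to the triad $(\infty,0,1)$, using the isomorphisms already established at $n=1$ and $n=\infty$ to force $\dim\cH(K_0,0)=\dim\cH(K_1,0)$ and thereby rule out potential pathologies such as concordant knots differing in V-shape versus W-shape at even $\nu^\sharp_\bK$.
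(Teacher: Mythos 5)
Your central claim --- that the concordance-trace cobordisms $W_n(C)$ induce isomorphisms $\phi_n\colon \cH(K_0,n)\to\cH(K_1,n)$ for every $n\in\Z$ --- is false, and the proposal does not survive without it. Concordant knots need not have surgeries with equal-dimensional framed instanton homology: take $K_0$ the unknot and $K_1$ any nontrivial slice knot, so that $\dim\cH(K_0,1)=\dim I^\sharp(S^3;\bK)=1$ while $\dim\cH(K_1,1)=r_{\bK}(K_1)+|1-\nk{K_1}|>1$ (equality would force $K_1$ to be a genus-one instanton L-space knot, hence a trefoil, which is not slice). Equivalently, $r_{\bK}$ is \emph{not} a concordance invariant, only $\nu^{\sharp}_{\bK}$ is. This also breaks your base case --- Lemma \ref{lem: monotone for large enough n} gives monotonicity of each sequence $\{\dim\cH(K_i,n)\}$ separately but with different constants $r_{\bK}(K_i)$, so nothing "forces $\phi_n$ to be an isomorphism" for large $|n|$ --- and it breaks the inductive step, since the five lemma needs two of the three vertical maps in each ladder rung to be isomorphisms, whereas you only ever have the single map $\phi_\infty=\pm\id$.

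The paper's proof (quoted verbatim from \cite[Proposition 1.12]{LY2025torsion}, following Baldwin--Sivek) uses functoriality in a weaker and correct way: because $\cH(\infty)\cong\bK$ is one-dimensional, the exact triangles of Proposition \ref{prop: surgery exact triangle} show that the condition $\dim\cH(k+1)=\dim\cH(k)+1$ is equivalent to the vanishing of $F^{k+1}_{\infty}$ (equivalently, the non-vanishing of $F^{\infty}_{k}$) with the appropriate bundle decorations. The trace of the concordance gives $\phi_k\circ F^{\infty}_{k}(K_0)=\pm F^{\infty}_{k}(K_1)$ and $F^{k+1}_\infty(K_1)\circ\phi_{k+1}=\pm F^{k+1}_\infty(K_0)$, and the reverse concordance gives the symmetric identities; together these show the (non)vanishing of these maps --- hence the location where the dimension sequence becomes monotone --- is the same for $K_0$ and $K_1$, with no claim whatsoever about $\phi_n$ being injective or surjective. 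Your instinct about the bundle-set bookkeeping is the right secondary concern, but the primary structure of your argument needs to be replaced by this vanishing/non-vanishing characterization.
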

\bpf
This is verbatim from the proof of \cite[Proposition 1.12]{LY2025torsion}.
\epf

\begin{proposition}\label{prop: shape of the dimension sequence}
For a knot $K\subset S^3$, we have the following two results regarding $\nu^{\sharp,\bK}_{\pm}(K)$.
\begin{itemize}
		\item For any $n\ge\nkp{K}$, we have
		\begin{equation}\label{eq: dst increasing}
			\dim \cH(n+1) = \dim \cH(n+1,\mu) = \dim \cH(n)+1 = \dim \cH(n,\mu)+1
		\end{equation}
		
		\item For any $n\le \nkm{K}$, we have
		\begin{equation}\label{eq: dst decreasing}
			\dim \cH(n-1) = \dim \cH(n-1,\mu) = \dim \cH(n)+1 = \dim \cH(n,\mu)+1
		\end{equation}
\end{itemize}
Furthermore, the sequence $\{\dim \cH(n)\}_{n\in\mathbb{Z}}$ has one of the following three shapes.
\begin{enumerate}
	\item V-shaped: we have that $\{\dim \cH(n)\}_{n\in\mathbb{Z}}$ is unimodal, i.e.\, it has a unique minimum at integer $m=\nkp{K}= \nkm{K}$. Furthermore, we have
	\[\dim \cH(n,\mu) = \dim \cH(n)\text{ for }n\neq m\aand \]\[\begin{cases}
	    \dim \cH(m,\mu) - \dim \cH(m)\in\{0,2\} &\text{ if }m\text{ is even};\\
     \dim \cH(m,\mu) = \dim \cH(m)&\text{ if }m\text{ is odd}.
	\end{cases}\] 

	\item W-shaped: we have $\nkp{K} = \nkm{K}+2$ and for $m = \nkp{K} - 1 = \nkm{K} + 1$, we have the following:
		\begin{itemize}
			\item If $m$ is even and $n\neq m$, then
			\[
				\dim \cH(m) = \dim \cH(m,\mu) + 2 \aand \dim \cH(n,\mu) = \dim \cH(n)
			\]
			\item If $m$ is odd and $n\neq m\pm 1$, then
			\[
				\dim \cH(m\pm 1) = \dim \cH(m\pm 1,\mu) - 2\aand \dim \cH(n,\mu) = \dim \cH(n)
			\]
		\end{itemize}

	\item Generalized W-shaped: we have $m=\nkp{K} > \nkm{K}+2$ and the following holds. For an even integer $n$ such that $n\in[\nkm{K},\nkp{K}]$, we have
		\[
		\begin{cases}
	    \dim \cH(n) = \dim \cH(n,\mu) + 2 &\text{ if } m \text{ is odd};\\	
            \dim \cH(n) = \dim \cH(n,\mu) - 2 &\text{ if } m \text{ is even},
		\end{cases}
		\]and for other integers $n$, we have \[\dim \cH(n) = \dim \cH(n,\mu).\]
\end{enumerate}
\end{proposition}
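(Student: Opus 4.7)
The plan is to proceed in two stages: first establish the monotonic tails \eqref{eq: dst increasing} and \eqref{eq: dst decreasing} (for $n$ strictly beyond $\nkp{K}$ and $\nkm{K}$), then perform a case analysis on the gap $d := \nkp{K} - \nkm{K}$ to identify the three shapes. Throughout, I would rely on three recurring tools: the canonical isomorphism $\cH(n) \cong \cH(n,\mu)$ for odd $n$, which follows from \eqref{eq: homology of surgery} together with Lemma \ref{lem: I_S is an iso}; the parity constraint $\dim\cH(n,\omega) \equiv n \pmod 2$ coming from $\chi = |n|$ (Remark \ref{rem: inequality}); and the family of four exact triangles from Proposition \ref{prop: surgery exact triangle} applied to the slope triad $(n-1, n, \infty)$, in which two of the three terms are $\cH(n-1)$ and $\cH(\infty) = \bK$.

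For the monotonic tails, the statement for odd indices is immediate from the canonical isomorphism, so the real work is at even indices. For even $n$ in the tail, I would apply Proposition \ref{prop: surgery exact triangle} to $(n-1, n, \infty)$: since $n-1$ and $\infty$ both have odd numerators, the proposition yields two exact triangles
\[
\cH(n-1) \to \cH(n) \to \cH(\infty) \to \cH(n-1), \quad \cH(n-1) \to \cH(n,\mu) \to \cH(\infty) \to \cH(n-1).
\]
With $\dim\cH(\infty) = 1$ and the parity constraint preventing $\dim\cH(n) = \dim\cH(n-1)$, exactness forces each of $\dim\cH(n)$ and $\dim\cH(n,\mu)$ to equal $\dim\cH(n-1) \pm 1$. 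I would then combine the definition of $\nkp{K}$ (fixing the sign for $\dim\cH(n)$) with Lemma \ref{lem: monotone for large enough n} applied to the $(\cdot,\mu)$-family (fixing the sign for all sufficiently large $n$), and propagate downward by induction, to obtain $\dim\cH(n,\mu) = \dim\cH(n)$ for every even $n$ in the upper tail. This yields \eqref{eq: dst increasing}; \eqref{eq: dst decreasing} is symmetric.

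For the shape classification, the first step is the inequality $\nkp{K} \ge \nkm{K}$: any $k$ strictly between them would simultaneously satisfy $\dim\cH(k+1) = \dim\cH(k)+1$ (from $k \ge \nkp{K}$) and $\dim\cH(k+1) = \dim\cH(k) - 1$ (from $k+1 \le \nkm{K}$), a contradiction. I would then split on $d$. If $d=0$, the sequence has a unique minimum at $m = \nkp{K} = \nkm{K}$, producing the V-shape; when $m$ is even, triangle analysis at the boundary constrains $\dim\cH(m,\mu) - \dim\cH(m)$ to $\{-2,0,2\}$, with the remaining refinement to $\{0,2\}$ deferred to Proposition \ref{prop: differed at most by 2}. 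If $d=2$, setting $m = \nkm{K}+1 = \nkp{K}-1$, the unique bump at $m$ is analyzed via the four triangles for $(m-1,m,\infty)$ and $(m,m+1,\infty)$, and the two sub-profiles are distinguished by the parity of $m$. If $d>2$, iterating the triangle argument across the middle region $[\nkm{K},\nkp{K}]$ yields the generalized W-shape, with the discrepancy $\dim\cH(n,\mu) - \dim\cH(n)$ vanishing at odd $n$ and taking a single fixed nonzero value (sign determined by the parity of $m = \nkp{K}$) at each even $n$ in the middle.

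I expect the main obstacle to be the generalized W-shape case $d>2$. Here the middle region contains several even slopes where $\cH$ and $\cH(\cdot,\mu)$ may diverge, and the delicate task is to maintain coherent inductive bookkeeping showing the discrepancy is \emph{constant} across these slopes, not merely bounded. The strategy should be to exploit all four triangles of Proposition \ref{prop: surgery exact triangle} simultaneously at each even slope, together with the identifications \eqref{eq: id of maps}, so that two independent exact sequences constrain the same discrepancy; parity-propagation across consecutive slope triads should then force a single value dictated by the parity of $m$.
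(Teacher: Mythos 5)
Your first stage (the monotone tails) works for $n$ strictly beyond $\nkp{K}$ and $\nkm{K}$: sandwiching $\dim\cH(n,\mu)$ between $\dim\cH(n-1)$ and $\dim\cH(n+1)$, which differ by $2$ there, does force $\dim\cH(n,\mu)=\dim\cH(n)$. But the second stage has a genuine gap. The shape classification --- specifically the sign and location of the discrepancy between $\dim\cH(n)$ and $\dim\cH(n,\mu)$ on the middle region $[\nkm{K},\nkp{K}]$ --- is \emph{not} determined by the numerical consequences of the exact triangles. All the triangles give is $|\dim\cH(n+1,\omega)-\dim\cH(n,\omega')|=1$ for every choice of bundle sets; from this one deduces only that at an even $n$ the two dimensions differ by $0$ or $\pm 2$, and that a nonzero difference forces opposite strict local extrema of the two sequences at $n$. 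Concretely, in the W-shaped case with even $m=\nkp{K}-1$ these constraints allow both $\dim\cH(m,\mu)=\dim\cH(m)-2$ (the claimed answer) and $\dim\cH(m,\mu)=\dim\cH(m)$ (both sequences bumping together at $m$), and nothing in your argument excludes the latter. Your proposed remedy --- running all four triangles of Proposition \ref{prop: surgery exact triangle} together with \eqref{eq: id of maps} --- cannot close this: for the triad containing $n-1$, $n$, $\infty$ the four triangles determine $\dim\cH(n)-\dim\cH(n-1)$ and $\dim\cH(n,\mu)-\dim\cH(n-1)$ by the vanishing or not of two \emph{different} maps out of the one-dimensional space $\cH(\infty)$, and the identifications \eqref{eq: id of maps} only match maps within the two pairs of triangles that already compute the same dimension; they never relate the two families. (Also, deferring the V-shape refinement to Proposition \ref{prop: differed at most by 2} is misplaced: that proposition concerns $\nkp{K}-\nkm{K}$, not the discrepancy, and its proof cites the present proposition, so the deferral would be circular --- though in fact the value $-2$ is already excluded numerically, since it would force a local maximum of $\dim\cH(n)$ at the minimum $m$.)

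The paper's proof (imported verbatim from \cite[Proposition 3.17]{LY2025torsion}) supplies exactly the missing link: Lemma \ref{lem: embedded sphere}\eqref{t1}, the vanishing of a cobordism map in the presence of an embedded sphere of square zero meeting the bundle set in an odd number of points. Applied to the sphere $S(r_2,r_1,r_2)$ this yields the relation \eqref{eq: vanishing}, $F^{r_1}_{r_2}(10)\circ F^{r_2}_{r_1}(11)=0$, which (together with the blow-up identities of Lemma \ref{lem: blow-up many cases}) ties the maps controlling $\dim\cH(n)$ to those controlling $\dim\cH(n,\mu)$ across consecutive slopes and forces the stated discrepancy pattern at each even slope in the middle region. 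Any correct proof needs some such geometric input beyond exactness, parity, and Euler characteristics; you should add Lemma \ref{lem: embedded sphere}\eqref{t1} to your toolkit and redo the case analysis with it.
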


\begin{proof}
	The proof is verbatim from that of \cite[Proposition 3.17]{LY2025torsion}, by replacing \cite[Lemma 3.10]{LY2025torsion} with Lemma \ref{lem: embedded sphere}\eqref{t1} for $\cR=\bK$.
\end{proof}
\brem\label{rem: same parity}
From the illustration of Proposition \ref{prop: shape of the dimension sequence} in \cite[Fig. 7-8]{LY2025torsion}, we have \[\nkp{K} - \nkm{K}\in 2 \Z.\]
\erem

\subsection{Elimination of the third case}
Next, we eliminate the case of being generalized W-shaped. We adopt the notation in Proposition \ref{prop: surgery exact triangle} and Definition \ref{defn: bundle core cocore}. Since different knots will be used, we add the knot into the notation:\[\cH(K,r,\omega),~ W^{r_i}_{r_{i+1}}(K)\aand F^{r_{i}}_{r_{i+1}}(K,\ep).\]

From the proof of \cite[Lemma 5.2]{baldwin2020concordance}, for any $a,b\in\Z$ and any two knots $K_1,K_2\subset S^3$, there exists an embedding
\begin{equation}\label{eq: embedding, no bundle}
	W^{\infty}_{a+b}(K_1\# K_2) \hookrightarrow W^{\infty}_{a}(K_1) \natural W^{\infty}_{b}(K_2)
\end{equation}
Moreover, as in the proofs of \cite[Lemma 5.2]{baldwin2020concordance} and \cite[Lemma 8.1 and Proposition 8.2]{baldwin2022concordanceII}, we can consider bundle sets in \ref{eq: embedding, no bundle} and obtain an embedding
\begin{equation}\label{eq: embedding, with bundle}
	\left(W^{\infty}_{a+b}(K_1\# K_2),\nu_{K_1\# K_2}\right) \hookrightarrow \left(W^{\infty}_{a}(K_1),\nu_{K_1}\right ) \natural \left(W^{\infty}_{b}(K_2),\nu_{K_2}\right),
\end{equation}where \begin{equation}\label{eq: possible bundles}
    (\nu_{K_1},\nu_{K_2},\nu_{K_1\# K_2})\in \left\{(\nu_a,\nu_b,\nu_{a+b}),(\nu_0,\wti{\nu}_0,\wti{\nu}_{0}),(\wti{\nu}_0,\nu_0,\wti{\nu}_{0}),(\wti{\nu}_0,\wti{\nu}_0,\nu_{0})\right\}
\end{equation}for the notation from Remark \ref{rem: BS notation}. We prefer to use the notation in Definition \ref{defn: bundle core cocore} because it simplifies the computation of the bundle sets via the identifications \eqref{eq: core and cocore}. More precisely, we obtain the following lemma.

\blem\label{lem: cobordism injective}
Suppose \begin{equation}\label{eq: bundle cases}
    \ep,\ep',\ep''\in \{(00,00,00),(00,01,01),(01,00,01),(01,01,00),(10,10,10),(11,11,10)\}
\end{equation}If $F^{\infty}_a(K_1,\ep)$ and $F^{\infty}_b(K_2,\ep')$ are both injective (or equivalently, non-vanishing), so it $F^{\infty}_{a+b}(K_1\# K_2,\ep'')$. 
\elem
\brem
From Remark \ref{rem: BS notation}, the last three cases in \eqref{eq: possible bundles} follow from the second, third, and fourth cases in \eqref{eq: bundle cases} for $a=b=0$, respectively. The first case in \eqref{eq: possible bundles} follows from the first four cases in \eqref{eq: bundle cases}, depending on whether $a$ and $b$ are even or odd.
\erem
\bpf[Proof of Lemma \ref{lem: cobordism injective}]
It suffices to compute the bundle sets so that \eqref{eq: embedding, with bundle} holds. Suppose $\mu_K$ and $\lambda_K$ are the meridian and the Seifert longitude of $K$. Suppose $S_{K}$ is the Seifert surface of $K$. Suppose $D_n(K)\subset S_n^3(K)$ and $D_\infty(K)\subset S_\infty^3(K)=S^3$ are the meridian disks and suppose $\Dc{\infty}{n}(K)$ and $\Dcc{\infty}{n}(K)$ are the core and cocore disks in $W^\infty_n(K)$. From \eqref{eq: core and cocore}, we have\[\Dc{\infty}{n}(K)=(n\cdot \mu_K+\lambda_K)\times I \cup D_{n}(K)\cup n\cdot D_\infty(K)\]\[\aand \Dcc{\infty}{n}(K)=\mu_K\times I\cup D_\infty(K).\]Moreover, we have\[\mu_{K_1}\times I\simeq \mu_{K_2}\times I\simeq\mu_{K_1\# K_2}\times I,\]\[ (\lambda_{K_1}\times I)\cup  (\lambda_{K_2}\times I)\simeq \lambda_{K_1\# K_2}\times I,\]\[S_{K_1}\cup S_{K_2}\simeq S_{K_1\# K_2}\]under the embedding \eqref{eq: embedding, no bundle}, where $\simeq$ means mod $2$ homologous and we apply Lemma \ref{lem: cob map depending on H_2}. Hence we obtain the desired result by direct computations.
\epf

\bprop\label{prop: differed at most by 2}
	For any field $\bK$ and any knot $K\subset S^3$, we have 
	\[
		\nkp{K} - \nkm{K} \leq 2.
	\]
	Furthermore, if $\nkp{K} - \nkm{K} = 2$ then they must both be odd.
\eprop
\bpf
We will apply Lemma \ref{lem: cobordism injective} for $K_1=\widebar{K}$ and $K_2=K$. Note that their connected sum is slice, which is concordant to the unknot $U$. The unknot is W-shaped over any field $\bK$ with $\nu^{\sharp,\bK}_{\pm }=\pm 1$ since \[\dim I^\sharp(L(p,q);\bK)=|p|\aand \dim I^\sharp(S^1\times S^2;\bK)-2=\dim I^\sharp(S^1\times S^2,\mu;\bK)=0\]from \cite[Corollary 1.2 and \S 7.6]{scaduto2015instantons} and Lemma \ref{lem: embedded sphere}\eqref{t2}. Thus, from Proposition \ref{prop: concordance invariants}, $\widebar{K}\# K$ is also W-shaped over any field $\bK$. Moreover, we have \begin{equation}\label{eq: W-shaped slice}
    \begin{aligned}
        F^\infty_n(\widebar{K}\# K,10)&\neq 0\text{ for integer }n\in(-\infty,-2]\cap \{0\}\\\aand F^\infty_n(\widebar{K}\# K,00)&\neq 0 \text{ for integer }n\le 0.
    \end{aligned}
\end{equation}

From Remark \ref{rem: same parity}, we know that $\nu^{\sharp,\bK}_{\pm}(K)$ have the same parity. Then we consider the following two cases.

\noindent{\bf Case 1}. $\nu^{\sharp,\bK}_{\pm}(K)$ are both odd.  We take 
\[
	a = \nkp{K_1} - 1 = -\nkm{K} - 1 \aand b = \nkp{K_2} - 1.
\]
Then \[\dim \cH(K_1,a)=\dim \cH(K_1,a+1)+1,\]\[\aand \dim \cH(K_2,b)=\dim \cH(K_2,b+1)+1\]by definition of $\nkp{K}$. Then Corollary \ref{cor: more triangles 2} and Remark \ref{rem: BS notation} imply that
\[
F^{\infty}_a(K_1,10) \neq 0 \aand F^{\infty}_b(K_2,10)\neq 0
\]
Then Lemma \ref{lem: cobordism injective} implies that
\[
F^{\infty}_{a+b}(K \# \widebar{K},10) \neq 0.
\]
Thus, we conclude from \eqref{eq: W-shaped slice} that $a+b\leq 0$, which is equivalent to
\[
\nkp{K} - \nkm{K}\leq 2.
\]

\noindent{\bf Case 2}. $\nu^{\sharp,\bK}_{\pm}(K)$ are both even. We take 
\[
	a = \nkp{K_1} = -\nkm{K} \aand b = \nkp{K_2} - 1.
\]
Then \[\dim \cH(K_1,a,\mu)=\dim \cH(K_1,a+1)+1,\]\[\aand \dim \cH(K_2,b)=\dim \cH(K_2,b+1)+1\]by Proposition \ref{prop: shape of the dimension sequence}. Then Corollary \ref{cor: more triangles 2} and Remark \ref{rem: BS notation} imply that\[F^{\infty}_{a}(K_1,01)\neq 0 \aand F^{\infty}_{b}(K_2,01)\neq 0.\]Then Lemma \ref{lem: cobordism injective} implies that
\[
F^{\infty}_{a+b}(\widebar{K}\# K,00) \neq 0.
\]
Thus, we conclude from \eqref{eq: W-shaped slice} that $a+b\leq 0$, which is equivalent to
\[
\nkp{K} - \nkm{K}\leq 1.
\]
Since the two invariants have the same parity, we conclude that $\nkp{K} = \nkm{K}$.
\epf

\subsection{\texorpdfstring{$\mathbb{Z}/4$}{Z/4}-grading and the first two cases}\label{subsec: Z/4}
Finally, we study the difference between $\dim \cH(\nk{K})$ and $\dim \cH(\nk{K},\mu)$. From \eqref{eq: homology of surgery}, the two dimensions coincide when $\nk{K}$ is odd. From the first two cases in Proposition \ref{prop: shape of the dimension sequence}, the difference is at most two when $\nk{K}$ is even. In the following, we use the $\Z/4$-grading on instanton homology to show that the difference must be two.

From \cite[\S 7.3]{scaduto2015instantons}, \cite[\S 4]{SS2018quasi} (see also \cite[\S 2.2]{Froyshov2002equi}), for any closed oriented $3$-manifold $Y$, there exists an \emph{absolute} $\Z/4$-grading on $I^\sharp(Y)$ and the grading shift of the cobordism map can be calculated by the traditional topological invariant of the cobordism via \cite[Equation (7.1)]{scaduto2015instantons}. For a bundle set $\omega\subset Y$ such that $[\omega]\neq 0\in H_1(Y;\F_2)$, there is only a \emph{relative} $\Z/4$-grading, and a choice of spin structure on $Y$ determines an absolute lift.

Recall that the set of spin structures is an affine space over $H^2(Y;\F_2)\cong H_1(Y;\F_2)$. From \eqref{eq: homology of surgery} and \cite[p. 189]{Kirbycal1999}, the two spin structures on $S^3_{2k}(K)$ for $k\in\Z$ are characterized by the property that one $\fs_0$ extends over the surgery cobordisms\begin{equation}\label{eq:spin s0}
    W^{2k}_{\infty}:S^3_{2k}(K)\to S^3\aand W^{\infty}_{2k}:S^3\to S^3_{2k}(K),
\end{equation}and the other $\fs_1$ extends over the surgery cobordisms\begin{equation}\label{eq: spin s1}
    W^{2k}_{2k+1}:S^3_{2k}(K)\to S^3_{2k+1}(K)\aand W^{2k-1}_{2k}: S^3_{2k-1}(K)\to S^3_{2k}(K).
\end{equation}To compare with \cite[Corollary 5.2]{alfieri2020framed}, we always choose $\fs_1$ to lift the absolute $\Z/4$-grading on \[\cH(2k,\mu)=I^\sharp(S^3_{2k}(K),\mu;\bK).\]This choice is not essential, as the absolute $\Z/4$-grading from $\fs_0$ differs from that of $\fs_1$ by $2$. Then the grading shifts of surgery cobordisms are calculated in the following lemma.

\blem\label{lem: Z/4 grading shift}
Suppose $k\in \Z$ and suppose the absolute $\Z/4$-grading on $\cH(2k,\mu)$ is determined by $\fs_1$ from 
\eqref{eq: spin s1}. Then the grading shifts of the surgery cobordism maps in Remark \ref{rem: BS notation} are listed as follows, where $(i)$ denotes the grading shift $i$.

\begin{equation*}
	\xymatrix{
	\cH(2k-1)\ar[rr]^{(i_1)}&& \cH(2k)\ar[dl]^{(i_2)}\\
	&\cH(\infty)\ar[lu]^{(i_3)}&
	}
    \xymatrix{
	\cH(2k)\ar[rr]^{(i_4)}&& \cH(2k+1)\ar[dl]^{(i_5)}\\
	&\cH(\infty)\ar[lu]^{(i_6)}&
	}
\end{equation*}
\begin{equation*}
	\xymatrix{
	\cH(2k-1)\ar[rr]^{(j_1)}&& \cH(2k,\mu)\ar[dl]^{(j_2)}\\
	&\cH(\infty)\ar[lu]^{(j_3)}&
	}
    \xymatrix{
	\cH(2k,\mu)\ar[rr]^{(j_4)}&& \cH(2k+1)\ar[dl]^{(j_5)}\\
	&\cH(\infty)\ar[lu]^{(j_6)}&
	}
\end{equation*}
where\[(i_1,i_2,i_3,i_4,i_5,i_6)=\begin{cases}
    (0,0,3,0,2,1)&\text{when }k>0;\\
    (3,2,2,2,2,3)&\text{when }k=0;\\
    (0,1,2,0,3,0)&\text{when }k<0;
\end{cases}\]\[(j_1,j_2,j_3,j_4,j_5,j_6)=\begin{cases}
    (0,2,1,0,0,3)&\text{when }k>0;\\
    (3,0,0,2,0,1)&\text{when }k=0;\\
    (0,3,0,0,1,2)&\text{when }k<0;
\end{cases}\]
\elem
\bpf
The results for $i_1,\dots,i_6$ are from \cite[Corollary 5.2]{alfieri2020framed}, which uses the facts that the grading shift for a cobordism with spin structure does not depend on the bundle set and the sum of grading shifts of three maps in the exact triangles is $-1\pmod 4$. To compute $j_1,\dots,j_6$, we can again use those two facts to obtain\begin{equation}\label{eq: grading formula}j_1=i_1,~j_2=i_2+2~,~j_1+j_2+j_3=-1,~j_4=i_4,~j_5=i_5+2,j_4+j_5+j_6=-1\pmod 4,
\end{equation}where we use the characterization of the spin structures in \eqref{eq:spin s0} and \eqref{eq: spin s1}. One can double check the results by computing $j_3$ and $j_5$ directly from \cite[Equation (7.1)]{scaduto2015instantons}, as there are no bundle sets on those cobordisms.
\epf
\bprop\label{prop: V and W}
If $\nk{K}$ is even, then we have
	\begin{equation*}
	    \big |\dim \cH(\nk{K})-\dim \cH(\nk{K},\mu)\big|=2.
	\end{equation*}
\eprop
\bpf
Suppose $2k=\nk{K}$. From Propositions \ref{prop: shape of the dimension sequence} and \ref{prop: differed at most by 2}, we know that \begin{equation}\label{eq: dim sign}
    \dim \cH(2k-1)=\dim \cH(2k-1)=\dim \cH(2k)\pm 1=\dim \cH(2k,\mu)\pm 1.
\end{equation}
From Lemma \ref{lem: Z/4 grading shift} and the fact that $\cH(\infty)\cong \bK$ is supported in grading $0$, if the sign in \eqref{eq: dim sign} is determined, then we can calculate the dimensions of the $(0,1,2,3)$-graded summand of $\cH(2k+1)$ from that of $\cH(2k-1)$ by passing through either $\cH(2k)$ or $\cH(2k,\mu)$. It turns out that the signs for $\cH(2k)$ and $\cH(2k,\mu)$ must be different so that the two approaches provide the same answer. This indeed follows from \eqref{eq: grading formula}. 

For example, we compute explicitly to exclude the case $k>0$ and \[\dim \cH(2k)=\dim \cH(2k,\mu)=\dim \cH(2k\pm 1)-1.\]Suppose the grading $(0,1,2,3)$ summands of $\cH(2k-1)$ and $\cH(2k+1)$ are $(a,b,c,d)$ and $(a',b',c',d')$, respectively. Then Lemma \ref{lem: Z/4 grading shift} implies that $(a',b',c',d')=(a,b,c+1,d-1)$ when passing through $\cH(2k)$ and $(a',b',c',d')=(a+1,b-1,c,d)$ when passing through $\cH(2k,\mu)$, which leads to a contradiction.
\epf

\section{Rational surgeries}\label{sec: rational surgeries}
In this section, we deal with the rational surgeries. Following \eqref{eq: H notation}, we fix a knot $K\subset S^3$ and a field $\bK$, and write\[\begin{aligned}
\cH(r,\omega)&=\cH(K,r,\omega)=I^\sharp(S^3_r(K),\omega;\bK),\\
    \cH(r)&=\cH(K,r)=I^\sharp(S^3_r(K),\omega=0;\bK)\\\aand \tcH(r)&=\tcH(K,r)=I^\sharp(S^3_r(K),\omega=\wti{K}_r;\bK)\cong I^\sharp(S^3_r(K),\mu;\bK).
\end{aligned}\]Similar to the case of $\bK=\C$ as in \cite[Theorem 4.6]{baldwin2020concordance} and \cite[Theorem 7.1]{baldwin2022concordanceII}, an important ingredient of the proof is the following lemma from \cite[\S 4]{baldwin2020concordance}.
\begin{lemma}\label{lem: existence of smaller surgery slopes}
	Suppose $p_0$ and $q_0$ are co-prime integers satisfying $p_0\neq 0$ and $|q_0|>1$. Suppose $r_0=p_0/q_0\in (k,k+1)$ for some integer $k$. Then there exist $r_i =p_i/q_i$ for $i=1,2,3$ that satisfy the following conditions.
	\begin{itemize}
    \item For $i=1,2,3$, $p_i$ and $q_i$ are co-prime, possibly zero integers, such that $p_i$ and $q_i$ have the same signs with $p_0$ and $q_0$, respectively, when they are not zero.
	\item $r_1,r_2 \in [k, k+1]$.
	\item $p_1+p_2 = p_0$ and $q_1+q_2 = q_0$.
    \item $p_3 = {\rm sign}(p_0)\cdot |p_1-p_2|$ and $q_3 = {\rm sign}(q_0) \cdot |q_1-q_2|$
    \item $(r_0,r_1,r_2,r_3)$ fits into the two slope triads as in \eqref{eq: extended triad}, denoted by
    \begin{equation}\label{eq: traids of slopes}
	\xymatrix{
		&r_0\ar[dr]&\\
		r_2\ar@<-2pt>[rr]\ar[ur] && r_1\ar[dl]\ar@<-2pt>[ll]\\
		&r_3\ar[lu]&
	}
\end{equation}
	\end{itemize}
\end{lemma}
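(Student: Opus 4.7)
The plan is to recognize this as a classical statement about the Stern--Brocot / Farey tree: given $r_0 = p_0/q_0 \in (k, k+1)$ with $|q_0| > 1$, one writes $r_0$ as the mediant of two Farey-adjacent rationals in $[k, k+1]$ to produce $r_1, r_2$, and takes $r_3$ to be the corresponding Farey predecessor.

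I would first normalize. The translation $(p_i, q_i) \mapsto (p_i - kq_i, q_i)$ preserves every determinant $p_i q_j - p_j q_i$, and the uniform sign flip $(p_i, q_i) \mapsto (-p_i, -q_i)$ does too, so I may reduce to $p_0, q_0 > 0$, $q_0 \ge 2$, and $r_0 = p_0/q_0 \in (0, 1)$. In the Farey sequence of order $q_0$, the fraction $r_0$ has a unique pair of Farey neighbors $r_1 = p_1/q_1 < r_2 = p_2/q_2$ in $[0, 1]$, both with positive coprime entries, satisfying the mediant identities
\begin{equation*}
p_0 = p_1 + p_2, \qquad q_0 = q_1 + q_2, \qquad p_2 q_1 - p_1 q_2 = 1.
\end{equation*}
Substituting the first two into the third immediately gives $p_0 q_1 - p_1 q_0 = 1 = p_2 q_0 - p_0 q_2$, which are the determinant conditions for the top triad in \eqref{eq: traids of slopes}. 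Coprimality and the location $r_1, r_2 \in [0,1]$ come for free from Farey theory.

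For $r_3$ I would set $(p_3, q_3) = (p_1 - p_2, q_1 - q_2)$, the Stern--Brocot predecessor along the branch leading to $r_0$. A one-line calculation using the Farey identity $p_2 q_1 - p_1 q_2 = 1$ yields
\begin{equation*}
p_1 q_3 - p_3 q_1 = p_2 q_1 - p_1 q_2 = 1 \aand (-p_2) q_3 - p_3 (-q_2) = p_2 q_1 - p_1 q_2 = 1,
\end{equation*}
matching the bottom triad of \eqref{eq: extended triad}, which involves $(-p_2, -q_2)$ rather than $(p_2, q_2)$. Coprimality of $(p_3, q_3)$ is a direct consequence. Finally I would undo the translation $p_i \mapsto p_i + k q_i$ and the sign flip, and pass to absolute values so that $(p_3, q_3)$ takes the form $(\mathrm{sign}(p_0) |p_1 - p_2|, \mathrm{sign}(q_0) |q_1 - q_2|)$ required by the lemma.

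I do not foresee a conceptual obstacle. The main nuisance is sign bookkeeping, together with a few degenerate boundary configurations: $r_1 = k/1$ or $r_2 = (k{+}1)/1$ (making some $p_i$ vanish after the translation), and $q_1 = q_2$ (which forces $q_0 = 2$ and $r_3 = \infty$). Each of these is explicitly permitted by the lemma's hypothesis that $p_i$ or $q_i$ may be zero, and each can be checked directly from the Stern--Brocot construction.
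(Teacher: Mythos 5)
Your proposal is correct and follows the same route the paper intends: the paper gives no independent proof of this lemma (it is quoted from \cite[\S 4]{baldwin2020concordance} and explained via the Farey tessellation in the remark that follows it), and the Farey-neighbor/mediant construction you describe is precisely that argument. The only slip is in your second displayed identity for the bottom triad, where $(-p_2)q_3 - p_3(-q_2)$ with $(p_3,q_3)=(p_1-p_2,\,q_1-q_2)$ actually equals $-(p_2q_1-p_1q_2)=-1$; the paper's condition is the transposed determinant $p_3(-q_2)-(-p_2)q_3=p_2q_1-p_1q_2=1$, which your choice does satisfy, so this falls under the sign bookkeeping you already flagged (and which the paper itself treats loosely, since its cobordism maps are only defined up to sign).
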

\brem
Lemma \ref{lem: existence of smaller surgery slopes} can be illustrated by the Farey tessellation of the hyperbolic plane, where the rationals at the vertices of each triangle (with hyperbolic geodesics as edges) corresponds to one choice of $r_0$, $r_1$, and $r_2$. Examples of $(r_0,r_1,r_2,r_3)$ are \[(1/2,0/1,1/1,1/0),~(2/3,1/1,1/2,0/1),~(1/3,1/2,0/1,1/1),~(-1/2,0/1,-1/1,1/0)\]drawn in the following diagram.    \begin{equation}\label{eq: examples of slope}
	\xymatrix{
		&\frac{1}{3}\ar[rr]&&\frac{1}{2}\ar@<-2pt>[dr]\ar[rr]\ar@<-2pt>[dl]&&\frac{2}{3}\ar[dl]\\
		\frac{-1}{2}\ar[rr]&&\frac{0}{1}=0\ar@<-2pt>[dr]\ar@<-2pt>[dl]\ar@<-2pt>[rr]\ar@<-2pt>[ur]\ar[ul] && \frac{1}{1}=1\ar[dl]\ar@<-2pt>[ll]\ar@<-2pt>[ul]&\\
		&\frac{-1}{1}=-1\ar[ul]\ar@<-2pt>[ur]&&\frac{1}{0}=\infty\ar@<-2pt>[ul]\ar[ll]&&
	}
\end{equation}Note that the parallelogram with vertices $0,1,\infty,-1$ is not an example of Lemma \ref{lem: existence of smaller surgery slopes}, but is still an example of \eqref{eq: extended triad}.
\erem
The main result of this section is the following.
\bprop\label{prop: dimension formula}
	Suppose $p$ and $q$ are co-prime integers with $q>1$. Suppose that $\nk{K}$ and $r_{\bK}(K)$ come from Proposition \ref{prop: dim formula for integers}. Then we have
	\begin{equation}\label{eq: dim formula rational}
	    \dim \cH(p/q,0)=\dim \cH(p/q,\mu)= q\cdot r_{\bK}(K) + |p-q\cdot \nk{K}|.
	\end{equation}
\eprop
\begin{proof}
	The proof comes from the induction with the help of Lemma \ref{lem: existence of smaller surgery slopes}. From Proposition \ref{prop: dim formula for integers}, the formula \eqref{eq: dim formula rational} also holds for $p=0$ or $|q|=1$ except for the case when $K$ is W-shaped over $\bK$ and $p/q=\nu^\sharp_{\bK}(K)$, which is the initial step in the induction. For short, we write \[M=\nu^\sharp_{\bK}(K).\]

    We first deal with the case \begin{equation}\label{eq: case 1}
        p/q\not\in \Z\aand p/q\not\in (M-1,M+1).
    \end{equation}
    
    We apply Lemma \ref{lem: existence of smaller surgery slopes} to $p_0/q_0$ and obtain $p_i/q_i$ for $i=1,2,3$. The induction hypothesis is that $p_i/q_i$ for $i=1,2,3$ all satisfy \eqref{eq: dim formula rational}. Then the exact triangles in Proposition \ref{prop: surgery exact triangle} and the dimension counting imply that the four triangles for $(r_2,r_1,r_3)$ split and hence either \[F^{r_1}_{r_3}(\ep)=0\text{ for all }\ep\text{ or }F^{r_3}_{r_2}(\ep)=0\text{ for all }\ep,\]where $\ep\in\{00,01,10,11\}$. From Lemma \ref{lem: blow-up many cases}, we have $F^{r_1}_{r_2}(\ep)=0$ for all $\ep$. Thus, the four triangles in Proposition \ref{prop: surgery exact triangle} for $(r_0,r_1,r_3)$ also split and the dimension counting implies that $p_0/q_0$ also satisfies \eqref{eq: dim formula rational}.

    Then we deal with the case \begin{equation}\label{eq: case 2}
        p/q\in (M-1,M)\cup (M,M+1).
    \end{equation}

    From Proposition \ref{prop: dim formula for integers}, if $M$ is odd, then $M$ also satisfies \eqref{eq: dim formula rational} and the proof of the case in \eqref{eq: case 1} applies. If $M$ is even, then exactly one of $\cH(M)$ and $\tcH(M)$ satisfies \eqref{eq: dim formula rational}. The proofs for the two cases are similar by using different exact triangles. Alternatively, one can add $\mu\times I$ into the bundle sets of all cobordism maps to switch one case to the other. We only consider the case that $\tcH(M)$ satisfies \eqref{eq: dim formula rational}, partly because of the expectation that any knot is W-shaped over any field $\bK$ in Question \ref{ques: W-shaped}.

    Furthermore, from the proof of the case in \eqref{eq: case 1} and Lemma \ref{lem: existence of smaller surgery slopes}, it suffices to show that \eqref{eq: dim formula rational} holds for all \begin{equation}\label{eq: exceptionals}
        p/q\in \left\{M\pm \frac{1}{n+1},M\pm \frac{2}{2n+1}\right\}_{n\in\Z_+}
    \end{equation}because then we obtain the remaining results by induction. When $M=0$, we can consider the diagram in \eqref{eq: examples of slope}. In the following, we only prove the case when $M=0$ and the signs in \eqref{eq: exceptionals} are positive. The general case for $M$ even and positive signs in \eqref{eq: exceptionals} is obtained by adding $M$ to all slopes except $\infty$. The general case for $M$ even and negative signs in \eqref{eq: exceptionals} is obtained by applying the result to the mirror knot $\widebar{K}$ and by using the fact that\[\dim \cH(K,r)=\dim \cH(\widebar{K},r)\aand \dim \tcH(K,r)=\dim \tcH(\widebar{K},r).\]Indeed, one may also apply the strategy for positive signs to negative signs, though the details of the proof would be different. From now on, we assume that $n\in\Z_+$.

    From the above reduction, we assume that $\tcH(0)$ satisfies \eqref{eq: dim formula rational}, and then by Proposition \ref{prop: dim formula for integers}, we have \begin{equation}\label{eq: assumption case 2}
        \dim \cH(0)=\dim \tcH(0)+2=\dim \cH(1)+1=\dim \tcH(1)+1.
    \end{equation}
    To be clear, we split the rest of the proofs into different cases.

    \indent{\bf Case 1}. $\cH(1/n)$ and $\tcH(1/n)$.
    
    By Proposition \ref{prop: surgery exact triangle}, the following maps vanish
    \[F^\infty_0(01)\aand F^\infty_0(11).\]
    
    From Lemma \ref{lem: blow-up many cases}, we have    \[F^{1/n}_0(01)=\pm F^{1/(n-1)}_0(11)\circ F^{1/n}_{1/(n-1)}(01)\aand  F^{1/n}_0(11)=\pm F^{1/(n-1)}_0(01)\circ F^{1/n}_{1/(n-1)}(10)\]
    Hence by induction, the following maps vanish for all $n\in\Z_+$ \[F^{1/n}_0(01)\aand  F^{1/n}_0(11).\]
    By Proposition \ref{prop: surgery exact triangle} and dimension counting, we know that $\cH(1/n)$ and $\tcH(1/n)$ satisfy \eqref{eq: dim formula rational}.

    \indent{\bf Case 2}. $\cH(2/(2n+1))$.
    
    From Lemma \ref{lem: blow-up many cases}, we have\[F^{1/n}_{1/(n+1)}(10)=\pm F^0_{1/(n+1)}(10)\circ F^{1/n}_0(11)=0.\]
    
    
    By Proposition \ref{prop: surgery exact triangle}, we know that $\cH( 2/(2n+1))$ satisfies \eqref{eq: dim formula rational}.

    \indent{\bf Case 3}. $\tcH(2/(2n+1))$.
    
    Note that from \eqref{eq: homology of surgery}, we do not necessarily have\[\dim \tcH(\pm 2/(2n+1))=\dim \cH(\pm 2/(2n+1))\]The result that $\tcH(\pm 2/(2n+1))$ satisfies \eqref{eq: dim formula rational} needs some new ingredient from Lemma \ref{lem: embedded sphere} \eqref{t4}. We will use exact triangles from Proposition \ref{prop: surgery exact triangle} freely.
    
    \indent{\bf Case 3.1}. $\tcH(2/3)$.
    
    We start with the case of $2/3$ as shown in \ref{eq: examples of slope} and then prove the general case. By the triangle, it suffices to show that \[F^{1}_{1/2}(00)=0.\]By Lemma \ref{lem: blow-up many cases}, we have \[F^{1}_{1/2}(00)=\pm F^{0}_{1/2}(00)\circ F^1_0(00)=\pm F^0_{1/2}(00)\circ F^\infty_0(00)\circ F^1_\infty(00)\]\[\aand F^1_0(10)=\pm F^\infty_0(10)\circ F^1_\infty(11).\]From \eqref{eq: assumption case 2}, we know that\[\rk F^1_\infty(00)= \rk F^\infty_0(00)=\rk F^\infty_0(10)= \rk F^1_\infty(11)=1\]\[\img F^1_0(00)=\img F^\infty_0(00), \aand \img F^1_0(10)= \img F^\infty_0(10).\]From Proposition \ref{prop: dim formula for integers}, we know that \[\dim \tcH(2)=\dim \cH(1)+1\aand \rk F^2_\infty(10)=1.\]
    Since $\dim \cH(\infty)=1$, we know that
    \[
        \rk  \left(F^\infty_{0}(00)\circ F^2_{\infty}(10)\right)=1.
    \]
    From Lemma \ref{lem: embedded sphere} \eqref{t4}, we have
    \[
        F^\infty_{0}(10)\circ F^2_{\infty}(11)=\pm F^\infty_{0}(00)\circ F^2_{\infty}(10).
    \]
    Hence we have
    \[
        \img F^\infty_0(10)=\img F^\infty_0(00).
    \]
    In summary, we conclude
    \begin{equation}\label{eq: images}
        \img F^1_0(00)=\img F^\infty_0(00)= \img F^\infty_0(10)=\img F^1_0(10).
    \end{equation}
    Thus, we have \[\begin{aligned}
        \{0\}=&\img\left(F^0_\frac{1}{2}(00)\circ F^1_0(10)\right)\\
        =& \img \left(F^0_{1/2}(00)\circ F^1_0(00)\right)\\
        =& \img F^{1}_{1/2}(00),
    \end{aligned}\]which concludes the case of $2/3$.

    \indent{\bf Case 3.2}. $\tcH(2/(2n+1))$.
    
    For $2/(2n+1)$, we need to show that \[F^{1/n}_{1/(n+1)}(00)=0.\]By applying Lemma \ref{lem: blow-up many cases} for many times, we have \[\begin{aligned}
        F^{1/n}_{1/(n+1)}(00)=&\pm F^0_{1/(n+1)}(00)\circ F^{1/n}_0(00)\\=&\pm F^0_{1/(n+1)}(00)\circ \left(F^\infty_0(00)\circ F^1_\infty(00)\circ \cdots\circ F^{1/n}_{1/(n-1)}(00)\right)
    \end{aligned}\]\[\aand F^{1/n}_{0}(10)=\pm F^\infty_0(10)\circ \left(F^1_\infty(11)\circ \cdots F^{1/n}_{1/(n-1)}(11) \right).\]The facts that $1/n$ and $1/(n+1)$ satisfy \eqref{eq: dim formula rational} from Case 1 and the dimension equalities in \eqref{eq: assumption case 2} imply that\[\rk F^{1/n}_0(00)=\rk F^{1/n}_{0}(10)=1.\]Then we have\[\begin{aligned}
        \{0\}=&\img\left(F^0_{1/(n+1)}(00)\circ F^{1/n}_0(10)\right)\\
        =& \img \left(F^0_{1/(n+1)}(00)\circ F^\infty_0(10)\right)\\
        =& \img \left(F^0_{1/(n+1)}(00)\circ F^\infty_0(00)\right)\\
        =& \img \left(F^0_{1/(n+1)}(00)\circ F^{1/n}_0(00)\right)\\
        =& \img F^{1/n}_{1/(n+1)}(00),
    \end{aligned}\]where the third equality is from \eqref{eq: images}, which concludes the case of $2/(2n+1)$.
\end{proof}
\brem
For the proof of the case in \eqref{eq: case 1}, one may also use Lemma \ref{lem: embedded sphere}\eqref{t2} instead of Lemma \ref{lem: embedded sphere}\eqref{t3} to carry out the proof. However, the authors have not found a proof of the case in \eqref{eq: case 2} without using Lemma \ref{lem: embedded sphere}\eqref{t4}.
\erem

\section{\texorpdfstring{$SU(2)$}{SU(2)}-representations}\label{sec: SU(2)-representations}

In this section, we focus on $\bK=\F_2$ and study $SU(2)$-representations on knot surgeries. The following lemma connects instanton homology to $SU(2)$-representations.
\begin{lemma}\label{lem: nondegenerate}
    Suppose $K\subset S^3$ is a knot and suppose $p/q\in\mathbb{Q}\backslash\{0\}$ for co-prime integers $p$ and $q$. If $S^3_{p/q}(K)$ is $SU(2)$-abelian and $p\in\{a^e,2a^e\}$ for some prime number $a$ and natural number $e$, then we have \[I^\sharp(S^3_{p/q}(K);\Z)\cong \Z^{|p|}.\]In particular, the knot $K$ is an instanton L-space knot over any field $\bK$.
\end{lemma}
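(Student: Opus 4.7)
The plan is to show that, under the $SU(2)$-abelian hypothesis, the framed instanton chain complex of $Y = S^3_{p/q}(K)$ admits a presentation with exactly $|p|$ free generators sitting in a single $\Z/2$-grading, which then forces $I^\sharp(Y;\Z) \cong \Z^{|p|}$ and hence the L-space property over every field $\bK$ by the universal coefficient theorem.

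First, I recall the Morse--Floer picture. Generators of the chain complex computing $I^\sharp(Y;\Z)$ correspond to gauge-equivalence classes of (perturbed) flat $SU(2)$ connections on the admissible bundle over $Y$ enlarged by $T^3$ with the $R$-gauge group (or, equivalently, by $(S^3,H)$ with an arc joining the two components of the Hopf link). Using the $SU(2)$-abelian assumption, every such connection must restrict to an abelian representation of $\pi_1(Y)$, and these are parametrized by characters of $H_1(Y;\Z) \cong \Z/p$, giving at most $|p|$ generators once the enlargement separates them.

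Second, and this is the main content, I need to check that each of these $|p|$ reducible critical points is non-degenerate, so that there is no hidden cokernel in the Hessian that would force extra generators to appear. Baldwin--Sivek~\cite[Proposition 5.2]{baldwin2019lspace} prove the required non-degeneracy when $p = a^e$ is a prime power, by reducing the question to a cohomology computation on the knot exterior and showing that a certain obstruction class (controlled by the meridian holonomy) is non-trivial whenever the order of the holonomy is a prime power. For $p = 2a^e$, I claim the same calculation carries over verbatim: the only potential degeneration is controlled by the existence of a divisor of $p$ satisfying a specific arithmetic constraint, and one checks directly that this constraint fails for $p = 2a^e$ with $a$ an odd prime; when $a = 2$ we have $p = 2^{e+1}$, which is already a prime power. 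This is precisely the content of the remark before Lemma~\ref{lem: nondegenerate} stating that Baldwin--Sivek's argument ``applies verbatim'' to $p = 2a^e$.

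Third, I combine the two steps with the Euler-characteristic identity $|\chi(I^\sharp(Y;\Z))| = |p|$ from~\cite[Corollary 1.4]{scaduto2015instantons}. With at most $|p|$ non-degenerate generators in a $\Z/2$-graded complex, the inequality $|\chi| \leq \rank$ becomes an equality, which forces all generators to lie in a single grading and the differential to vanish. Therefore $I^\sharp(Y;\Z)$ is free of rank $|p|$, and tensoring with any field $\bK$ yields $\dim_\bK I^\sharp(Y;\bK) = |p|$, so $K$ is an instanton L-space knot over $\bK$. The main obstacle is the non-degeneracy step: carefully retracing the Baldwin--Sivek obstruction computation to confirm that the conclusion extends cleanly to $p = 2a^e$. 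Once that is in hand, the count of abelian representations and the collapse of the chain complex are routine.
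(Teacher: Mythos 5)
Your overall architecture (count the abelian flat connections, verify non-degeneracy of the reducibles, then collapse the complex using $|\chi(I^\sharp)|=|p|$) is the right one, and it is essentially what Baldwin--Sivek's \cite[Theorem 4.6, Proposition 4.7, Corollary 4.8]{baldwin2018stein} already packages; the paper simply cites those results rather than re-deriving steps one and three. The problem is step two, which you yourself flag as ``the main obstacle'': this is the \emph{only} new mathematical content of the lemma beyond the literature, and your proposal asserts rather than proves it. You refer to ``a divisor of $p$ satisfying a specific arithmetic constraint'' without ever identifying the constraint, so the claim that ``one checks directly that this constraint fails for $p=2a^e$'' is not a proof. (Your observation that $a=2$ gives $p=2^{e+1}$, already a prime power, is correct but only disposes of one subcase.)

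The missing verification is short but concrete. The non-degeneracy criterion from \cite{baldwin2018stein} is that the Alexander polynomial satisfies $\Delta_K(\omega^2)\neq 0$ for every $p$-th root of unity $\omega$. For $p=2a^e$, if $\omega$ is a $2a^e$-th root of unity then $\eta=\omega^2$ is an $a^e$-th root of unity; if $\Delta_K(\eta)=0$ with $\eta\neq 1$, then the cyclotomic polynomial $\Phi$ of $\eta$ divides $\Delta_K(t)$, and since $\eta$ has prime-power order, $\Phi(1)=a$, which would force $a\mid \Delta_K(1)=\pm 1$ --- a contradiction (and $\eta=1$ is excluded since $\Delta_K(1)=\pm1\neq 0$). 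This is exactly the argument the paper gives, following \cite[Corollary 9.2]{baldwin2019lspace}. Without spelling out this cyclotomic computation, your proposal reduces to a restatement of the lemma together with a plan to prove it.
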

\bpf
The case $p=a^e$ follows directly from \cite[Corollary 4.8]{baldwin2018stein}, and the case $p=2a^e$ is a slight generalization. By \cite[Theorem 4.6 and Proposition 4.7]{baldwin2018stein}, this case reduces to the fact that for the Alexander polynomial $\Delta_K(t)$ of $K$, any $2a^e$-th root of unity $\omega$ satisfies $\Delta_K(\omega ^2)\neq 0$. 

Suppose $\omega$ is a $2a^e$-th root of unity such that $\Delta_K(\omega^2) = 0$. Let $\eta = \omega^2$. Then $\eta$ is a $a^e$-th root of unity and $\Delta_K(\eta) = 0$. Similar to the proof of \cite[Corollary 9.2]{baldwin2019lspace}, let $\Phi(t)$ be the cyclotomic polynomial associated to $\eta$. We have $\Phi(t) \mid \Delta_K(t)$. Hence $p= \Phi(1)\mid \Delta_K(1)=1$, which leads to a contradiction.
\epf

Now we prove the main theorems.
\bthm\label{thm: larger than nu}
Suppose $K$ is a nontrivial knot of genus $g~(\ge 1)$ and suppose $p/q\in (0,\infty)$ with $q\ge 1$, $\gcd(p,q)=1$, and $p\in\{a^e,2a^e\}$ for some prime number $a$ and natural number $e$. If $S^3_{p/q}(K)$ is $SU(2)$-abelian, then the knot $K$ is an instanton L-space knot over any field $\bK$,\[r_{\bK}(K)=\nk{K}\ge \nu^\sharp_\C(K)=2g-1\aand p/q\ge \nk{K}.\]Moreover, for $\bK=\F_2$, we have\[\nu^\sharp_{\F_2}(K)\ge \nu^\sharp_\C(K)+1=2g.\]
\ethm
\bpf
Suppose $S^3_{p/q}(K)$ is $SU(2)$-abelian with $p\in\{a^e,2a^e\}$ and $q\ge 1$. Then Lemma \ref{lem: nondegenerate} implies that \begin{equation}\label{eq: dim p}
    \dim I^\sharp(S^3_{p/q}(K);\bK)=p
\end{equation}for any field $\bK$. Hence $K$ is an instanton L-space knot over any field $\bK$. From \cite[Theorem 1.18]{baldwin2020concordance}, we have \[r_{\C}(K)=\nu^\sharp_{\C}(K)=2g-1.\]

From Theorem \ref{thm: dimension formula, main}, if $p/q< \nu^\sharp_{\bK}(K)$, then \[p=q\cdot r_{\bK}(K) - (p-q\cdot \nk{K}),\]which implies \[\frac{p}{q}=\frac{r_{\bK}(K)+\nu^\sharp_{\bK}(K)}{2}\ge \frac{|\nu^\sharp_{\bK}(K)|+\nu^\sharp_{\bK}(K)}{2}\ge \nu^\sharp_{\bK}(K),\]where the first inequality follows from Remark \ref{rem: inequality}. This leads to a contradiction. 

Thus, we obtain $p/q\ge  \nu^\sharp_{\bK}(K)$. From Theorem \ref{thm: dimension formula, main} and \eqref{eq: dim p}, we obtain\[p=q\cdot r_{\bK}(K) + p-q\cdot \nk{K},\]or equivalently \begin{equation}\label{eq: same r nu}
    r_{\bK}(K)=\nk{K}.
\end{equation}

From the universal coefficient theorem, we have \[\dim I^\sharp(S^3_1(K);\bK)\geq\dim I^\sharp(S^3_1(K);\C)\]for any field $\bK$.
Applying Theorem \ref{thm: dimension formula, main} and \eqref{eq: same r nu} to $p/q=1$ and $\bK=\C$, we obtain that 
\begin{equation}\label{eq: inequality nu}
    \begin{aligned}
    \nu^\sharp_{\bK}(K)+|1-\nu^\sharp_{\bK}(K)|=&|r_{\bK}(K)|+|1-\nu^\sharp_{\bK}(K)|\\\ge&r_{\C}(K)+|1-\nu^\sharp_{\C}(K)|\\=&\nu^\sharp_{\C}(K)+|1-\nu^\sharp_{\C}(K)|\\=&2\nu^\sharp_{\C}(K)-1\\\ge&2(2g(K)-1)-1\\=&1.
\end{aligned}
\end{equation}
If $\nu^\sharp_{\bK}(K)\le 1$, then all inequalities in \eqref{eq: inequality nu} should be equalities and $\nu^\sharp_{\C}(K)=g(K)=1$. If $\nu^\sharp_{\bK}(K)>1$, then the left-hand-side of \eqref{eq: inequality nu} equals to $2\nu^\sharp_{\bK}(K)-1$ and \eqref{eq: inequality nu} implies that \[\nu^\sharp_{\bK}(K)\ge \nu^\sharp_{\C}(K)= 2g(K)-1.\]

Furthermore, for $\bK=\F_2$, \cite[Theorem 1.1]{LY20255surgery} implies that\[\dim I^\sharp(S^3_1(K);\F_2)>\dim I^\sharp(S^3_1(K);\C).\]Hence a modification of \eqref{eq: inequality nu} implies that \[\nu^\sharp_{\F_2}(K)>\nu^\sharp_{\C}(K).\]Since both invariants are integers, we have \[\nu^\sharp_{\F_2}(K)\ge \nu^\sharp_{\C}(K)+1=2g.\]

\epf
\bthm\label{thm: abelian}
Suppose $K$ is a nontrivial knot and suppose $p/q\in (2,6)$ with $q\ge 1$, $\gcd(p,q)=1$, and $p\in\{a^e,2a^e\}$ for some prime number $a$ and non-negative integer $e$. Then $S^3_{p/q}(K)$ is $SU(2)$-abelian only when $K=T_{2,3}$ and \[\frac{p}{q}\in \big\{6-\frac{1}{n}\big\}_{n\in\Z_+}.\]
\ethm
\bpf
Note that $K$ is an instanton L-space knot (over $\C$) by Theorem \ref{thm: larger than nu}.

When $g\ge 3$, then Theorem \ref{thm: larger than nu} implies that\[p/q\geq\nu^\sharp_{\F_2}(K)\ge 2g\ge 6.\]

When $g\le 2$, the results in \cite{BS2022khovanov,farber2024fixed} imply that $K$ must be either $T_{2,3}$ or the torus knot $T_{2,5}$. From \cite[Proposition 4.3]{SZ2022surgery}, only $T_{2,3}$ and $p/q\in\{6,6-1/n\}_{n\in\Z_+}$ can be the candidate when $p/q\in (2,6)$.
\epf

\section{Bypass exact triangle and genus one knots}\label{sec: genus 1}
In this section, we fix the proof of instanton bypass exact triangle in \cite[\S 4]{BS2022khovanov} and then prove Proposition \ref{prop: genus 1}.

Because of Remark \ref{rem: dependence wrong} and Example \ref{exmp: dependence}, one can only use Lemma \ref{lem: cob map depending on H_2} to identify cobordism maps with different bundle sets. In the proof of the instanton bypass exact triangle, especially \cite[Formula (19)]{BS2022khovanov}, Baldwin--Sivek claimed the following\begin{equation}\label{eq: BS bundle}
    I(W_1,\nu)=I(W_1,\widebar{\kappa}_1)\aand I(W_2,\nu)=I(W_2,\kappa_2),
\end{equation}where $W_1:Y_1\to Y_2$ and $W_2:Y_2\to Y_3$ are two consecutive surgery cobordisms in a surgery exact triangle for some knot $K\subset Y_1$, with the extra bundle set in the incoming end of $W_1$, $\nu=\omega\times I$ for some $\omega=\alpha\cup\eta$ disjoint from $K$, the bundle set $\widebar{\kappa}_1$ is the union of some punctured torus $\widehat{T}\subset Y_1$ with $K$ framed by $\partial \widehat{T}$ and the bundle set from the triangle, the bundle set $\widebar{\kappa}_2$ is the one from the triangle. This is indeed the exact triangle \eqref{eq: surgery exact triangle, more 2 3}. Thus, we know $\widebar{\kappa}_1$ is the union of $\nu$ and a torus and $\kappa_2$ is just $\nu$, which implies that the second equation in \eqref{eq: BS bundle} is trivial.

Baldwin--Sivek used the fact that $b_1(Y_2)-1=b_1(Y_1)=b_1(Y_3)$ to obtain \eqref{eq: BS bundle}, which is insufficient because similar phenomenon might happen as in Example \ref{exmp: dependence}. However, with a little more work, we show that \eqref{eq: BS bundle} still holds because of the existence of the punctured torus. Note that Example \ref{exmp: dependence} is based on a Seifert surface of genus larger than $1$.

Since the manifold $Y_1$ and the bundle set $\omega$ are obtained from a closure of balanced sutured manifold and the punctured torus $\widehat{T}$ is constructed from the fact that $K$ intersects the suture twice \cite[Figure 7 and p. 921]{BS2022khovanov}, the bundle set $\eta$ intersects $\widehat{T}$ at one point and hence $|\omega\cap \widehat{T}|=1$. 

Moreover, note that we only need \eqref{eq: BS bundle} over the coefficient field $\bK=\C$ and on the $(+2)$-generalized eigenspace of $\mu(\pt)$. Since $\mu(\pt)$ is defined in \cite[\S 7.3]{donaldson2002floer} whenever $\mathrm{char}(\bK)\neq 2$ (see also \cite{DK1992instanton,KM1995structure} for the closed case), we write \[I(Y,\omega;\bK)_{\pm 2}=\mathrm{colim}_{N}\ker \left((\mu(\pt)\pm 2)^N\right)\subset I(Y,\omega;\bK)\]for the $(\pm 2)$-generalized eigenspace of $\mu(\pt)$ over such a field $\bK$. From \cite[Theorem 9]{Froyshov2002equi}, when $(Y,\omega)$ is nontrivial admissible, we know \begin{equation*}\label{eq: vanish}
    (\mu(\pt)-4)^{N}=(u-64)^{N}=0
\end{equation*}for some large integer $N$. From Jordan-Chevalley decomposition, since $+2\neq -2\in \bK$ with $\mathrm{char}(\bK)\neq 2$, we have the generalized eigenspace decomposition\[I(Y,\omega;\bK)=I(Y,\omega;\bK)_{+2}\op I(Y,\omega;\bK)_{-2}.\]Since $I(Y,\omega;\bK)$ inherits a relative $\Z/8$-grading and $\mu=\mu(\pt)$ is degree $4$, for homogeneous elements $x,y\in I(Y,\omega;\bK)$ with $\gr(x)=\gr(y)+4$, we have\[(\mu+ 2)^N(x+y)=0\text{ if and only if }(\mu-2)^N(x-y)=0,\]because both equations are equivalent to\[(\mu^N+c_{N-2}\cdot \mu^{N-2}+\cdots)(x)+(c_{N-1}\cdot \mu^{N-1}+c_{N-3}\cdot \mu^{N-3}+\cdots)(y)\]\[\aand (\mu^N+c_{N-2}\cdot \mu^{N-2}+\cdots)(y)+(c_{N-1}\cdot \mu^{N-1}+c_{N-3}\cdot \mu^{N-3}+\cdots)(x)\]for some integers $c_{i}$. Thus, we have\[\dim I(Y,\omega,\bK)_{\pm 2}=\frac{1}{2}\dim I(Y,\omega,\bK).\]Since $\mu(\pt)$ commutes with the cobordism map, a cobordism $(W,\nu):(Y_0,\omega_0)\to (Y_1,\omega_1)$ between nontrivial admissible pairs induce\[I(W,\nu)_{\pm 2}: I(Y_0,\omega_0;\bK)_{\pm 2}\to (Y_1,\omega_1;\bK)_{\pm 2}\]\[\mathrm{with}~I(W,\nu)=I(W,\nu)_{+2}+I(W,\nu)_{-2}.\]

Then the following proposition and corollary fix the proof of the bypass exact triangle.

\bprop\label{prop: torus}
    Suppose $(W,\nu):(Y_0,\omega_0)\to (Y_1,\omega_1)$ is a cobordism between nontrivial admissible pairs. Suppose $T\subset W$ is an embedded torus such that $T\cdot T=0$ and $l=|\nu\cap T|$ is odd. Suppose $\bK$ is a field with $\mathrm{char}(\bK)\neq 2$, then we have
	\[
		I(W,\nu\cup T)_{\pm 2} = c_{\pm}\cdot I(W,\nu)_{\pm 2}:I(Y_0,\omega_0;\bK)_{\pm 2}\to I(Y_1,\omega_1;\bK)_{\pm 2},
	\]where $c_{\pm}\in \{\pm 1\}$.
\eprop
\bpf
Let $W_T=W\backslash {\rm int}N(T)$ and $\nu_T=\nu\cap W_T$. Since $T\cdot T=0$ and $l=|\nu\cap T|$ is odd. We have\[(N(T),\nu\cap N(T))=(T\times D^2,l\cdot D^2)\aand\]\[(W_T,\nu_T):(Y_0,\omega_0)\sqcup (T\times S^1,l\cdot S^1)\to (Y_1,\omega_1).\]
From \cite[Lemma 4]{Froyshov2002equi}, we know that\begin{equation}\label{eq: group}
    I(T\times S^1,l\cdot S^1;\bK)\cong \bK\langle x,y\rangle,
\end{equation}where $x$ and $y$ are homogeneous generators with grading difference $4$. Moreover, $\mu(\pt)$ sends $x$ to $2y$ and $y$ to $2x$. Then\[I(T\times S^1,l\cdot S^1;\bK)_{\pm 2}=\bK\langle x\pm y\rangle.\]
Note that $(T\times D^2,l\cdot D^2)$ and $(T\times D^2,(l\cdot D^2)\cup (T\times \{0\}))$ induce two homogeneous elements in \eqref{eq: group} with a grading difference of $4$ by the index formula. Without loss of generality, we assume that \[I(T\times D^2,l\cdot D^2)=a\cdot x \aand I(T\times D^2,(l\cdot D^2)\cup (T\times \{0\}))=b\cdot y\]for some $a,b\in \bK$. Under the pair of pants cobordism \[(T\times P,l\cdot P):(T\times D^2,l\cdot D^2)\sqcup (T\times D^2,l\cdot D^2)\to (T\times D^2,l\cdot D^2),\]inserting $a\cdot x$ into one summand induces the identity map and inserting $b\cdot y$ into two summands induces $\pm a\cdot x$ by Lemma \ref{lem: cob map depending on H_2}. Hence $a=\pm 1$ and $b=\pm 1$, where we implicitly use the fact that $x,y$ and relative invariants are defined over $\Z$ and hence $b^2\ge 0$. Under the generalized eigenspace decomposition, we have\[x=\frac{1}{2}(x+y)+\frac{1}{2}(x-y)\aand y=\frac{1}{2}(x+y)-\frac{1}{2}(x-y).\]Hence the proposition follows from the functoriality result.
\epf
\bcor\label{cor: reduce triangle}
Suppose $(Y,\omega,K)$ is a nontrivial admissible surgery pair such that $K$ is framed by a genus-one Seifert surface $S$. Suppose $\bK$ is a field with $\mathrm{char}(\bK)\neq 2$. If $|\omega\cap S|$ is odd, then the exact triangle \eqref{eq: surgery exact triangle, more 2 3} reduces to the following one.
\begin{equation*}\label{eq: surgery exact triangle, more 2 3 reduced}
	\xymatrix{
	I(Y,\omega;\bK)_{\pm 2}\ar[rr]^{F^\infty_0(10)\circ \bI'_S=\pm F^\infty_0(00)}&& I(Y_0(K),\omega;\bK)_{\pm 2}\ar[dl]^{F^0_1(00)}\\
	&I(Y_1(K),\omega;\bK)_{\pm 2}\ar[lu]^{\bI_S\circ F^1_\infty(00)}&
	}
\end{equation*}
\ecor
\bpf
The union of $S$ and the core disk in $W^\infty_0$ is an embedded torus satisfies the assumption of Proposition \ref{prop: torus}. Note that, the union of $S$ and the cocore disk in $W^{1}_\infty$ has self-intersection $1$ and does not satisfy the assumption of Proposition \ref{prop: torus}.
\epf
\brem
By Corollary \ref{cor: reduce triangle} and the discussion at the beginning of this section, we fix the proof of bypass exact triangle by picking up suitable closure of balanced sutured manifold. Note that the sign does not matter because the kernel and image of a map are both unchanged by adding a minus sign. The idea of studying the effect of the torus as an extra bundle set is due to John Baldwin. Note that Corollary \ref{cor: reduce triangle} only implies the exactness of the bypass exact triangle at one vertex, and we need to apply the naturality of the sutured instanton homology to prove the exactness at the other vertices, as in the original proof \cite[\S 4]{BS2022khovanov}.
\erem
\bprop\label{prop: genus one proof}
Suppose $K\subset S^3$ is a genus-one knot and suppose $\bK$ is a field with $\mathrm{char}(\bK)\neq 2$. Then \[|\nu^\sharp_{\bK}(K)|\le 1.\]
\eprop
\bpf
When $\mathrm{char}(\bK)\neq 2$, the discussion in \cite[\S 2.1]{LY2025torsion} implies that \[I^\sharp(Y,\omega;\bK)\cong I((Y,\omega)\#(T^3,S^1);\bK)_{+2}\]and the isomorphism intertwine with the cobordism map. From Proposition \ref{prop: surgery exact triangle}, we have the following exact triangle
\begin{equation*}
	\xymatrix{
	I^\sharp(S^3_1(K);\bK)\ar[rr]^{F^1_2(10)\circ \bI'_{S+D_1}}&& I^\sharp(S^3_2(K);\bK)\ar[dl]^{F^2_\infty(00)}\\
	&I^\sharp(S^3;\bK)\ar[lu]^{\bI_{S+D_1}\circ F^\infty_1(01)\quad}&
	}
\end{equation*}From Lemma \ref{lem: blow-up many cases}, we have \[F^\infty_1(01)=\pm F^0_{1}(11)\circ F^\infty_0(01).\]Note that the union of the genus-one Seifert surface $S$ and the core disk in $W^\infty_{0}$ is an embedded torus of self-intersection $0$. Moreover, it intersects the cocore disk in $W^\infty_0$ at one point. Then by Proposition \ref{prop: torus}, we have \[F^\infty_0(01)=\pm F^\infty_0(11)\circ \bI'_S.\]Proposition \ref{prop: surgery exact triangle}, we have\[\pm F^0_{1}(11)\circ F^\infty_0(11)=0\]and hence $F^\infty_1(01)=0$. Then \[\dim I^\sharp(S^3_2(K);\bK)=\dim I^\sharp(S^3_1(K);\bK)+ 1.\]Since the mirror knot $\widebar{K}$ also has a genus-one Seifert surface, the above discussion also implies that\[\dim I^\sharp(S^3_2(\widebar{K});\bK)=\dim I^\sharp(S^3_1(\widebar{K});\bK)+ 1.\]Since \[\dim I^\sharp(S^3_n(\widebar{K});\bK)=\dim I^\sharp(S^3_{-n}(K);\bK)\]for any integer $n$, by Proposition \ref{prop: dim formula for integers}, we have $|\nu^\sharp_{\bK}(K)|\le 1$.
\epf

\bibliographystyle{alpha}

\begin{thebibliography}{KMOS07}

\bibitem[ABDS22]{alfieri2020framed}
Antonio Alfieri, John~A. Baldwin, Irving Dai, and Steven Sivek.
\newblock Instanton {F}loer homology of almost-rational plumbings.
\newblock {\em Geom. Topol.}, 26(5):2237--2294, 2022.

\bibitem[AM25]{AM2025sign}
Mohammed Abouzaid and Ciprian Manolescu.
\newblock {Canonical orientations in Heegaard Floer theory}.
\newblock arXiv:2510.20062, v1, 2025.

\bibitem[BCK22]{boden2022GLPairing}
Hans~U. Boden, Micah Chrisman, and Homayun Karimi.
\newblock The {G}ordon-{L}itherland pairing for links in thickened surfaces.
\newblock {\em Internat. J. Math.}, 33(10-11):Paper No. 2250078, 47, 2022.

\bibitem[Bha24]{bhat2023newtriangle}
Deeparaj Bhat.
\newblock Surgery exact triangles in instanton theory.
\newblock {\em arXiv: 2311.04242, v2}, 2024.

\bibitem[BLSY24]{BLSY21}
John~A. Baldwin, Zhenkun Li, Steven Sivek, and Fan Ye.
\newblock Small {D}ehn surgery and {${\rm SU}(2)$}.
\newblock {\em Geom. Topol.}, 28(4):1891--1922, 2024.

\bibitem[BS18]{baldwin2018stein}
John~A. Baldwin and Steven Sivek.
\newblock Stein fillings and {SU}(2) representations.
\newblock {\em Geom. Topol.}, 22(7):4307--4380, 2018.

\bibitem[BS21]{baldwin2020concordance}
John~A. Baldwin and Steven Sivek.
\newblock Framed instanton homology and concordance.
\newblock {\em J. Topol.}, 14(4):1113--1175, 2021.

\bibitem[BS22a]{baldwin2022concordanceII}
John~A. Baldwin and Steven Sivek.
\newblock {Framed instanton homology and concordance II}.
\newblock arXiv: 2206.11531, v1, 2022.

\bibitem[BS22b]{BS2022khovanov}
John~A. Baldwin and Steven Sivek.
\newblock Khovanov homology detects the trefoils.
\newblock {\em Duke Math. J.}, 171(4):885--956, 2022.

\bibitem[BS23]{baldwin2019lspace}
John~A. Baldwin and Steven Sivek.
\newblock Instantons and {L}-space surgeries.
\newblock {\em J. Eur. Math. Soc. (JEMS)}, 25(10):4033--4122, 2023.

\bibitem[CDX20]{CDX2020polygon}
Lucas Culler, Aliakbar Daemi, and Yi~Xie.
\newblock Surgery, polygons and {${\rm SU}(N)$}-{F}loer homology.
\newblock {\em J. Topol.}, 13(2):576--668, 2020.

\bibitem[Con79]{conner1979differentiable}
Pierre~E. Conner.
\newblock {\em Differentiable periodic maps}, volume 738 of {\em Lecture Notes in Mathematics}.
\newblock Springer, Berlin, second edition, 1979.

\bibitem[DK90]{DK1992instanton}
S.~K. Donaldson and P.~B. Kronheimer.
\newblock {\em The geometry of four-manifolds}.
\newblock Oxford Mathematical Monographs. The Clarendon Press, Oxford University Press, New York, 1990.
\newblock Oxford Science Publications.

\bibitem[DMEL24]{DMML2024distancetwo}
Aliakbar Daemi, Mike Miller~Eismeier, and Tye Lidman.
\newblock {Filtered instanton homology and cosmetic surgery}.
\newblock arXiv:2410.21248, v2, 2024.

\bibitem[Don87]{Donaldson1987orientation}
S.~K. Donaldson.
\newblock The orientation of {Y}ang-{M}ills moduli spaces and {$4$}-manifold topology.
\newblock {\em J. Differential Geom.}, 26(3):397--428, 1987.

\bibitem[Don90]{Donaldson1990polynomial}
S.~K. Donaldson.
\newblock Polynomial invariants for smooth four-manifolds.
\newblock {\em Topology}, 29(3):257--315, 1990.

\bibitem[Don02]{donaldson2002floer}
S.~K. Donaldson.
\newblock {\em Floer homology groups in {Y}ang-{M}ills theory}, volume 147 of {\em Cambridge Tracts in Mathematics}.
\newblock Cambridge University Press, Cambridge, 2002.
\newblock With the assistance of M. Furuta and D. Kotschick.

\bibitem[Flo90]{floer1990knot}
Andreas Floer.
\newblock Instanton homology, surgery, and knots.
\newblock In {\em Geometry of low-dimensional manifolds, 1 ({D}urham, 1989)}, volume 150 of {\em London Math. Soc. Lecture Note Ser.}, pages 97--114. Cambridge Univ. Press, Cambridge, 1990.

\bibitem[Fre21]{Freeman2021triangle}
Jesse Freeman.
\newblock {\em {The Surgery Exact Triangle in Monopole Floer Homology with $\mathbb{Z}[i]$ Coefficients}}.
\newblock https://hdl.handle.net/1721.1/139100, 2021.

\bibitem[FRW24]{farber2024fixed}
Ethan Farber, Braeden Reinoso, and Luya Wang.
\newblock Fixed-point-free pseudo-{A}nosov homeomorphisms, knot {F}loer homology and the cinquefoil.
\newblock {\em Geom. Topol.}, 28(9):4337--4381, 2024.

\bibitem[Fy02]{Froyshov2002equi}
Kim~A. Fr\o~yshov.
\newblock Equivariant aspects of {Y}ang-{M}ills {F}loer theory.
\newblock {\em Topology}, 41(3):525--552, 2002.

\bibitem[GME]{SGMME}
Sudipta Ghosh and Mike Miller~Eismeier.
\newblock {Framed instanton homology and Fr\o yshov invariant}.
\newblock In preparation.

\bibitem[GS99]{Kirbycal1999}
Robert~E. Gompf and Andr\'as~I. Stipsicz.
\newblock {\em {$4$}-manifolds and {K}irby calculus}, volume~20 of {\em Graduate Studies in Mathematics}.
\newblock American Mathematical Society, Providence, RI, 1999.

\bibitem[Han23]{Hanselman2023cosmetic}
Jonathan Hanselman.
\newblock Heegaard {F}loer homology and cosmetic surgeries in {$S^3$}.
\newblock {\em J. Eur. Math. Soc. (JEMS)}, 25(5):1627--1669, 2023.

\bibitem[Hat07]{hatcher2007notes}
Allen Hatcher.
\newblock Notes on basic 3-manifold topology, 2007.

\bibitem[KM95]{KM1995structure}
P.~B. Kronheimer and T.~S. Mrowka.
\newblock Embedded surfaces and the structure of {D}onaldson's polynomial invariants.
\newblock {\em J. Differential Geom.}, 41(3):573--734, 1995.

\bibitem[KM04]{kronheimer04su2}
Peter~B. Kronheimer and Tomasz~S. Mrowka.
\newblock {Dehn surgery, the fundamental group and {SU}(2)}.
\newblock {\em Math. Res. Lett.}, 11(5-6):741--754, 2004.

\bibitem[KM07]{kronheimer2007monopoles}
Peter~B. Kronheimer and Tomasz~S. Mrowka.
\newblock {\em Monopoles and three-manifolds}, volume~10 of {\em New Mathematical Monographs}.
\newblock Cambridge University Press, Cambridge, 2007.

\bibitem[KM10]{kronheimer2010knots}
Peter~B. Kronheimer and Tomasz~S. Mrowka.
\newblock Knots, sutures, and excision.
\newblock {\em J. Differ. Geom.}, 84(2):301--364, 2010.

\bibitem[KM11a]{kronheimer2011khovanov}
Peter~B. Kronheimer and Tomasz~S. Mrowka.
\newblock Khovanov homology is an unknot-detector.
\newblock {\em Publ. Math. Inst. Hautes \'{E}tudes Sci.}, 113:97--208, 2011.

\bibitem[KM11b]{kronheimer2011knot}
Peter~B. Kronheimer and Tomasz~S. Mrowka.
\newblock Knot homology groups from instantons.
\newblock {\em J. Topol.}, 4(4):835--918, 2011.

\bibitem[KM22]{KM2022relation}
Peter Kronheimer and Tomasz Mrowka.
\newblock Relations in singular instanton homology.
\newblock arXiv:2210.07059, v1, 2022.

\bibitem[KMOS07]{kronheimer2007monopolesandlens}
Peter~B. Kronheimer, Tomasz~S. Mrowka, Peter~S. Ozsv\'{a}th, and Zolt\'{a}n Szab\'{o}.
\newblock Monopoles and lens space surgeries.
\newblock {\em Ann. of Math. (2)}, 165(2):457--546, 2007.

\bibitem[Kro97]{Kronheimer1997obstruction}
P.~B. Kronheimer.
\newblock An obstruction to removing intersection points in immersed surfaces.
\newblock {\em Topology}, 36(4):931--962, 1997.

\bibitem[LY24]{LY2022integral1}
Zhenkun Li and Fan Ye.
\newblock {Knot surgery formulae for instanton Floer homology I: the main theorem}.
\newblock arXiv:2206.10077, v3, 2024.

\bibitem[LY25a]{LY2025torsion}
Zhenkun Li and Fan Ye.
\newblock 2-torsion in instanton {F}loer homology.
\newblock {\em Adv. Math.}, 472:Paper No. 110289, 55, 2025.

\bibitem[LY25b]{LY20255surgery}
Zhenkun Li and Fan Ye.
\newblock {Instanton 2-torsion and Dehn surgeries}.
\newblock arXiv:2508.03394, v1, 2025.

\bibitem[Mos71]{moser1971elementary}
Louise Moser.
\newblock Elementary surgery along a torus knot.
\newblock {\em Pacific Journal of Mathematics}, 38(3):737--745, 1971.

\bibitem[OS05]{ozsvath2005double}
Peter~S. Ozsv\'{a}th and Zolt\'{a}n Szab\'{o}.
\newblock On the {H}eegaard {F}loer homology of branched double-covers.
\newblock {\em Adv. Math.}, 194(1):1--33, 2005.

\bibitem[OS06]{OS2006smooth}
Peter Ozsv\'ath and Zolt\'an Szab\'o.
\newblock Holomorphic triangles and invariants for smooth four-manifolds.
\newblock {\em Adv. Math.}, 202(2):326--400, 2006.

\bibitem[OSS15]{OSS2015grid}
Peter~S. Ozsv\'{a}th, Andr\'{a}s~I. Stipsicz, and Zolt\'{a}n Szab\'{o}.
\newblock {\em Grid homology for knots and links}, volume 208 of {\em Mathematical Surveys and Monographs}.
\newblock American Mathematical Society, Providence, RI, 2015.

\bibitem[Ozs94]{Ozsvath1994blowup}
Peter~S. Ozsv\'ath.
\newblock Some blowup formulas for {${\rm SU}(2)$} {D}onaldson polynomials.
\newblock {\em J. Differential Geom.}, 40(2):411--447, 1994.

\bibitem[Sca15]{scaduto2015instantons}
Christopher Scaduto.
\newblock Instantons and odd {K}hovanov homology.
\newblock {\em J. Topol.}, 8(3):744--810, 2015.

\bibitem[SS18]{SS2018quasi}
Christopher Scaduto and Matthew Stoffregen.
\newblock Two-fold quasi-alternating links, {K}hovanov homology and instanton homology.
\newblock {\em Quantum Topol.}, 9(1):167--205, 2018.

\bibitem[SZ22a]{sivek2022cyclic}
Steven Sivek and Raphael Zentner.
\newblock {$SU(2)$}-cyclic surgeries and the pillowcase.
\newblock {\em J. Differential Geom.}, 121(1):101--185, 2022.

\bibitem[SZ22b]{SZ2022surgery}
Steven Sivek and Raphael Zentner.
\newblock Surgery obstructions and character varieties.
\newblock {\em Trans. Amer. Math. Soc.}, 375(5):3351--3380, 2022.

\end{thebibliography}

\end{document}